\newif\iftac\tactrue
\newif\ifcref\creftrue
\def\definetac{\newif\iftac}    % Can't define a \newif inside another \if!
\def\definebeamer{\newif\ifbeamer}
\def\definecref{\newif\ifcref}
\else\usepackage{amsthm}\fi
  \definecolor{darkgreen}{rgb}{0,0.45,0} 
\else\usepackage[pagebackref,colorlinks,citecolor=darkgreen,linkcolor=darkgreen]{hyperref}
  \renewcommand*{\backref}[1]{}
  \renewcommand*{\backrefalt}[4]{({%
      \ifcase #1 Not cited.%
            \or On p.~#2%
            \else On pp.~#2%
      \fi%
    })}\fi
\ifcref\usepackage{cleveref,aliascnt}\fi
\let\ea\expandafter
\def\mdef#1#2{\ea\ea\ea\gdef\ea\ea\noexpand#1\ea{\ea\ensuremath\ea{#2}\xspace}}
\def\alwaysmath#1{\ea\ea\ea\global\ea\ea\ea\let\ea\ea\csname your@#1\endcsname\csname #1\endcsname
  \ea\def\csname #1\endcsname{\ensuremath{\csname your@#1\endcsname}\xspace}}
\DeclareRobustCommand\widecheck[1]{{\mathpalette\@widecheck{#1}}}
\def\@widecheck#1#2{%
    \setbox\z@\hbox{\m@th$#1#2$}%
    \setbox\tw@\hbox{\m@th$#1%
       \widehat{%
          \vrule\@width\z@\@height\ht\z@
          \vrule\@height\z@\@width\wd\z@}$}%
    \dp\tw@-\ht\z@
    \@tempdima\ht\z@ \advance\@tempdima2\ht\tw@ \divide\@tempdima\thr@@
    \setbox\tw@\hbox{%
       \raise\@tempdima\hbox{\scalebox{1}[-1]{\lower\@tempdima\box
\tw@}}}%
    {\ooalign{\box\tw@ \cr \box\z@}}}
\def\foreachletter#1#2#3{\foreachcount=#1
  \ea\loop\ea\ea\ea#3\@alph\foreachcount
  \advance\foreachcount by 1
  \ifnum\foreachcount<#2\repeat}
\def\foreachLetter#1#2#3{\foreachcount=#1
  \ea\loop\ea\ea\ea#3\@Alph\foreachcount
  \advance\foreachcount by 1
  \ifnum\foreachcount<#2\repeat}
\def\definescr#1{\ea\gdef\csname s#1\endcsname{\ensuremath{\mathscr{#1}}\xspace}}
\def\definecal#1{\ea\gdef\csname c#1\endcsname{\ensuremath{\mathcal{#1}}\xspace}}
\def\definebold#1{\ea\gdef\csname b#1\endcsname{\ensuremath{\mathbf{#1}}\xspace}}
\def\definebb#1{\ea\gdef\csname d#1\endcsname{\ensuremath{\mathbb{#1}}\xspace}}
\def\definefrak#1{\ea\gdef\csname f#1\endcsname{\ensuremath{\mathfrak{#1}}\xspace}}
\def\definesf#1{\ea\gdef\csname i#1\endcsname{\ensuremath{\mathsf{#1}}\xspace}}
\def\definebar#1{\ea\gdef\csname #1bar\endcsname{\ensuremath{\overline{#1}}\xspace}}
\def\definetil#1{\ea\gdef\csname #1til\endcsname{\ensuremath{\widetilde{#1}}\xspace}}
\def\definehat#1{\ea\gdef\csname #1hat\endcsname{\ensuremath{\widehat{#1}}\xspace}}
\def\definechk#1{\ea\gdef\csname #1chk\endcsname{\ensuremath{\widecheck{#1}}\xspace}}
\def\defineul#1{\ea\gdef\csname u#1\endcsname{\ensuremath{\underline{#1}}\xspace}}
\def\autofmt@n#1\autofmt@end{\mathrm{#1}}
\def\autofmt@b#1\autofmt@end{\mathbf{#1}}
\def\autofmt@d#1#2\autofmt@end{\mathbb{#1}\mathsf{#2}}
\def\autofmt@c#1#2\autofmt@end{\mathcal{#1}\mathit{#2}}
\def\autofmt@s#1#2\autofmt@end{\mathscr{#1}\mathit{#2}}
\def\autofmt@f#1\autofmt@end{\mathsf{#1}}
\def\autofmt@k#1\autofmt@end{\mathfrak{#1}}
\def\autofmt@u#1\autofmt@end{\underline{\smash{\mathsf{#1}}}}
\def\autofmt@U#1\autofmt@end{\underline{\underline{\smash{\mathsf{#1}}}}}
\def\autofmt@h#1\autofmt@end{\widehat{#1}}
\def\autofmt@r#1\autofmt@end{\overline{#1}}
\def\autofmt@t#1\autofmt@end{\widetilde{#1}}
\def\autofmt@k#1\autofmt@end{\check{#1}}
\def\auto@drop#1{}
\def\autodef#1{\ea\ea\ea\@autodef\ea\ea\ea#1\ea\auto@drop\string#1\autodef@end}
\def\@autodef#1#2#3\autodef@end{%
  \ea\def\ea#1\ea{\ea\ensuremath\ea{\csname autofmt@#2\endcsname#3\autofmt@end}\xspace}}
\def\autodefs@end{blarg!}
\def\autodefs#1{\@autodefs#1\autodefs@end}
\def\@autodefs#1{\ifx#1\autodefs@end%
  \def\autodefs@next{}%
  \else%
  \def\autodefs@next{\autodef#1\@autodefs}%
  \fi\autodefs@next}
\DeclareSymbolFont{bbold}{U}{bbold}{m}{n}
\DeclareSymbolFontAlphabet{\mathbbb}{bbold}
\newcommand{\dtwo}{\ensuremath{\mathbbb{2}}\xspace}
\mdef\delbar{\overline{\partial}}
\newcommand{\dual}{^{\vee}}
\mdef\hf{\textstyle\frac12 }
\mdef\thrd{\textstyle\frac13 }
\mdef\qtr{\textstyle\frac14 }
\newcommand{\op}{^{\mathrm{op}}}
\newcommand{\co}{^{\mathrm{co}}}
\mdef\Id{\mathrm{Id}}
\mdef\id{\mathrm{id}}
\def\frc#1/#2.{\frac{#1}{#2}}   % \frc x^2+1 / x^2-1 .
\mdef\ten{\mathrel{\otimes}}
\mdef\sqten{\mathrel{\boxtimes}}
\DeclareFontFamily{U}{min}{}
\DeclareFontShape{U}{min}{m}{n}{<-> udmj30}{}
\newcommand{\yon}{\!\text{\usefont{U}{min}{m}{n}\symbol{'210}}\!}
\DeclareFontFamily{U}{mathb}{\hyphenchar\font45}
\DeclareFontShape{U}{mathb}{m}{n}{
      <5> <6> <7> <8> <9> <10> gen * mathb
      <10.95> mathb10 <12> <14.4> <17.28> <20.74> <24.88> mathb12
      }{}
\DeclareSymbolFont{mathb}{U}{mathb}{m}{n}
\DeclareMathSymbol{\dotplus}       {2}{mathb}{"00}% name to be checked
\DeclareMathSymbol{\dotdiv}        {2}{mathb}{"01}% name to be checked
\DeclareMathSymbol{\dottimes}      {2}{mathb}{"02}% name to be checked
\DeclareMathSymbol{\divdot}        {2}{mathb}{"03}% name to be checked
\DeclareMathSymbol{\udot}          {2}{mathb}{"04}% name to be checked
\DeclareMathSymbol{\square}        {2}{mathb}{"05}% name to be checked
\DeclareMathSymbol{\Asterisk}      {2}{mathb}{"06}
\DeclareMathSymbol{\bigast}        {1}{mathb}{"06}
\DeclareMathSymbol{\coAsterisk}    {2}{mathb}{"07}
\DeclareMathSymbol{\bigcoast}      {1}{mathb}{"07}
\DeclareMathSymbol{\circplus}      {2}{mathb}{"08}% name to be checked
\DeclareMathSymbol{\pluscirc}      {2}{mathb}{"09}% name to be checked
\DeclareMathSymbol{\convolution}   {2}{mathb}{"0A}% name to be checked
\DeclareMathSymbol{\divideontimes} {2}{mathb}{"0B}% name to be checked
\DeclareMathSymbol{\blackdiamond}  {2}{mathb}{"0C}% name to be checked
\DeclareMathSymbol{\sqbullet}      {2}{mathb}{"0D}% name to be checked
\DeclareMathSymbol{\bigstar}       {2}{mathb}{"0E}
\DeclareMathSymbol{\bigvarstar}    {2}{mathb}{"0F}
\DeclareMathSymbol{\corresponds}   {3}{mathb}{"1D}% name to be checked
\DeclareMathSymbol{\updownarrows}          {3}{mathb}{"D6}
\DeclareMathSymbol{\downuparrows}          {3}{mathb}{"D7}
\DeclareMathSymbol{\Lsh}                   {3}{mathb}{"E8}
\DeclareMathSymbol{\Rsh}                   {3}{mathb}{"E9}
\DeclareMathSymbol{\dlsh}                  {3}{mathb}{"EA}
\DeclareMathSymbol{\drsh}                  {3}{mathb}{"EB}
\DeclareMathSymbol{\looparrowdownleft}     {3}{mathb}{"EE}
\DeclareMathSymbol{\looparrowdownright}    {3}{mathb}{"EF}
\DeclareMathSymbol{\curvearrowleftright}   {3}{mathb}{"F2}
\DeclareMathSymbol{\curvearrowbotleft}     {3}{mathb}{"F3}
\DeclareMathSymbol{\curvearrowbotright}    {3}{mathb}{"F4}
\DeclareMathSymbol{\curvearrowbotleftright}{3}{mathb}{"F5}
\DeclareMathSymbol{\leftsquigarrow}        {3}{mathb}{"F8}
\DeclareMathSymbol{\rightsquigarrow}       {3}{mathb}{"F9}
\DeclareMathSymbol{\leftrightsquigarrow}   {3}{mathb}{"FA}
\DeclareMathSymbol{\lefttorightarrow}      {3}{mathb}{"FC}
\DeclareMathSymbol{\righttoleftarrow}      {3}{mathb}{"FD}
\DeclareMathSymbol{\uptodownarrow}         {3}{mathb}{"FE}
\DeclareMathSymbol{\downtouparrow}         {3}{mathb}{"FF}
\DeclareMathSymbol{\varhash}       {0}{mathb}{"23}
\newcommand{\too}[1][]{\ensuremath{\overset{#1}{\longrightarrow}}}
\let\toot\rightleftarrows
\let\otto\leftrightarrows
\let\into\hookrightarrow
\mdef\we{\overset{\sim}{\longrightarrow}}
\mdef\leftwe{\overset{\sim}{\longleftarrow}}
\let\xto\xrightarrow
\def\rightarrowtailfill@{\arrowfill@{\Yright\joinrel\relbar}\relbar\rightarrow}
\newcommand\xrightarrowtail[2][]{\ext@arrow 0055{\rightarrowtailfill@}{#1}{#2}}
\def\twoheadrightarrowfill@{\arrowfill@{\relbar\joinrel\relbar}\relbar\twoheadrightarrow}
\newcommand\xtwoheadrightarrow[2][]{\ext@arrow 0055{\twoheadrightarrowfill@}{#1}{#2}}
\def\slashedarrowfill@#1#2#3#4#5{%
  $\m@th\thickmuskip0mu\medmuskip\thickmuskip\thinmuskip\thickmuskip
   \relax#5#1\mkern-7mu%
   \cleaders\hbox{$#5\mkern-2mu#2\mkern-2mu$}\hfill
   \mathclap{#3}\mathclap{#2}%
   \cleaders\hbox{$#5\mkern-2mu#2\mkern-2mu$}\hfill
   \mkern-7mu#4$%
}
\def\rightslashedarrowfill@{%
  \slashedarrowfill@\relbar\relbar\mapstochar\rightarrow}
\newcommand\xslashedrightarrow[2][]{%
  \ext@arrow 0055{\rightslashedarrowfill@}{#1}{#2}}
\mdef\hto{\xslashedrightarrow{}}
\mdef\htoo{\xslashedrightarrow{\quad}}
\def\toiso{\xto{\smash{\raisebox{-.5mm}{$\scriptstyle\sim$}}}}
\def\jd#1{\@jd#1\ej}
\def\@jd#1|-#2\ej{\@@jd#1,,\;\vdash\;\left(#2\right)}
\def\@@jd#1,{\@ifmtarg{#1}{\let\next=\relax}{\left(#1\right)\let\next=\@@@jd}\next}
\def\@@@jd#1,{\@ifmtarg{#1}{\let\next=\relax}{,\,\left(#1\right)\let\next=\@@@jd}\next}
\def\jdm#1{\@jdm#1\ej}
\def\@jdm#1|-#2\ej{\@@jd#1,,\\\vdash\;\left(#2\right)}
\long\def\my@drawfill#1#2;{%
\@skipfalse
\fill[#1,draw=none] #2;
\@skiptrue
\draw[#1,fill=none] #2;
}
\newif\if@skip
\newcommand{\skipit}[1]{\if@skip\else#1\fi}
\newcommand{\drawfill}[1][]{\my@drawfill{#1}}
\newcounter{nodemaker}
\newif\ifhyperref
  \let\your@state\state
  \def\state#1{\my@state#1}
  \def\my@state#1.{\gdef\currthmtype{#1}\your@state{#1.}}
  \let\your@staterm\staterm
  \def\staterm#1{\my@staterm#1}
  \def\my@staterm#1.{\gdef\currthmtype{#1}\your@staterm{#1.}}
  \let\@defthm\newtheorem
  \def\switchtotheoremrm{\let\@defthm\newtheoremrm}
  \def\defthm#1#2#3{\@defthm{#1}{#2}} % Ignore the third argument (for cleveref only)
  \let\your@section\section
  \def\section{\gdef\currthmtype{section}\your@section}
  \def\currthmtype{}
    \def\autoref#1{\ref*{label@name@#1}~\ref{#1}}
    \def\autoref#1{\ref{label@name@#1}~\ref{#1}}
    \let\old@label\label%
    \def\label#1{%
      {\let\your@currentlabel\@currentlabel%
        \edef\@currentlabel{\currthmtype}%
        \old@label{label@name@#1}}%
      \old@label{#1}}
  \let\cref\autoref
  \def\defthm#1#2#3{%
    %% Ensure all theorem types are numbered with the same counter
    \newaliascnt{#1}{thm}
    \newtheorem{#1}[#1]{#2}
    \aliascntresetthe{#1}
    %% This command tells cleveref's \cref what to call things
    \crefname{#1}{#2}{#3}% following brace must be on separate line to support poorman cleveref sed file
  }
    \def\defthm#1#2#3{% Ignore the third argument (for cleveref only)
      %% All types of theorems are number inside sections
      \newtheorem{#1}{#2}[section]%
      %% This command tells hyperref's \autoref what to call things
      \expandafter\def\csname #1autorefname\endcsname{#2}%
      %% This makes all the theorem counters actually the same counter
      \expandafter\let\csname c@#1\endcsname\c@thm}
    \def\defthm#1#2#3{\newtheorem{#1}[thm]{#2}} % Ignore the third argument (for cleveref only)
\let\SK@label\label\fi
    \let\old@label\label
    \let\your@thm\@thm
    \def\@thm#1#2#3{\gdef\currthmtype{#3}\your@thm{#1}{#2}{#3}}
    \def\currthmtype{}
    \def\label#1{{\let\your@currentlabel\@currentlabel\def\@currentlabel%
        {\currthmtype~\your@currentlabel}%
        \SK@label{#1@}}\old@label{#1}}
    \def\autoref#1{\ref{#1@}}
  \let\cref\autoref
\newtheorem{thm}{Theorem}[section]
  \crefname{thm}{Theorem}{Theorems}
\else\theoremstyle{definition}\fi
\else\theoremstyle{remark}\fi
  \crefname{part}{Part}{Parts}
  \crefname{figure}{Figure}{Figures}
  \let\qed\endproof
  \let\your@endproof\endproof
  \def\my@endproof{\your@endproof}
  \def\endproof{\my@endproof\gdef\my@endproof{\your@endproof}}
  \def\qedhere{\tag*{\endproofbox}\gdef\my@endproof{\relax}}
  \def\pr@@f[#1]{\subsubsection*{\sc #1.}}
\def\thmqedhere{\expandafter\csname\csname @currenvir\endcsname @qed\endcsname}
  \let\c@equation\c@subsection
  \let\c@equation\c@thm
\numberwithin{equation}{section}
\mdef\ep{\varepsilon}
\mdef\ph{\varphi}
\let\al\alpha
\let\be\beta
\let\si\sigma
\let\ze\zeta
\def\p{^+}
\def\m{^-}
\let\types\vdash
\def\isprop{\;\mathsf{prop}}
\def\ucd#1{\underline{\mathsf{CD}}_{#1}}
\def\sfd{_{\mathsf{fd}}}
\def\sfpd{_{\mathsf{fpd}}}
\def\sub{\mathsf{Sub}}
\def\usub(#1){\ul{\mathsf{Sub}(#1)}}
\def\cdt{\mathsf{CD}_{\cT}}
\def\ul#1{\underline{\smash{#1}}}
\def\uphi{\ul{\phi}}
\def\upsi{\ul{\psi}}
\def\uxi{\ul{\xi}}
\def\uze{\ul{\ze}}
\def\madj{\mathbb{MA}\mathsf{dj}}
\def\tmadj{\mathcal{MA}\mathit{dj}}
\def\madjs{\mathbb{MA}\mathsf{dj}_{\mathrm{S}}}
\def\madjcgr{\mathbb{MA}\mathsf{dj}_{\mathrm{CGR}}}
\def\polmadj{\mathbb{P}\mathsf{ol}\mathbb{MA}\mathsf{dj}}
\def\dchu{\mathbb{C}\mathsf{hu}}
\def\dadj{\mathbb{A}\mathsf{dj}}
\def\cAdj{\mathcal{A}\mathit{dj}}
\def\iSeq{\mathsf{Seq}}
\def\psh{\mathrm{Psh}}
\def\adjc{\mathsf{Adj}_{\cC}}
\def\adjcom{\mathsf{Adj}_{\cC}(\Omega)}
\def\stpoly#1{\ast\mathsf{Poly}_{#1}}
\def\zzz{(\mathord{;})}
\def\zz{()}
\let\tens\boxtimes
\def\cotens{\mathbin{\raisebox{-1pt}{\rotatebox{45}{$\scriptstyle\boxtimes$}}}}
\def\dual#1{{#1}^\perp}
\def\cat{\ensuremath{\mathcal{C}\mathit{at}}\xspace}
\def\tcat{\ensuremath{2\text{-}\mathcal{C}\mathit{at}}\xspace}
\def\poly{\ensuremath{\mathcal{P}\mathit{oly}}\xspace}
\def\chu{\mathsf{Chu}}
\def\chuz{\mathsf{Chu}_0}
\def\tchu{\mathcal{C}\mathit{hu}}
\def\Set{\ensuremath{\mathsf{Set}}\xspace}
\def\dial{\mathsf{Dial}}
\DeclarePairedDelimiter\floor{\lfloor}{\rfloor}
\DeclarePairedDelimiter\rep{[}{]}
\def\repl#1{\rep{#1}_{\mathrm{L}}}
\def\repr#1{\rep{#1}_{\mathrm{R}}}
\def\ev{\mathsf{ev}}
\def\floorlex#1{\floor{#1}_{\mathrm{lex}}}
\def\Lex{\mathrm{Lex}}
\let\P\cP
\def\abs#1{|#1|}
\def\d{^{\bullet}}
\let\ihom\multimap
\title[The 2-Chu-Dialectica construction]{The 2-Chu-Dialectica construction and the polycategory of multivariable adjunctions}
\author{Michael Shulman}
\thanks{This material is based on research
    sponsored by The United States Air Force Research Laboratory under
    agreement number FA9550-15-1-0053 and FA9550-16-1-0292.  The
    U.S. Government is authorized to reproduce and distribute reprints
    for Governmental purposes notwithstanding any copyright notation
    thereon.  The views and conclusions contained herein are those of
    the author and should not be interpreted as necessarily
    representing the official policies or endorsements, either expressed
    or implied, of the United States Air Force Research Laboratory, the
    U.S. Government, or Carnegie Mellon University.}
\keywords{multivariable adjunction, polycategory, Dialectica construction, Chu construction}
\begin{document}
\maketitle

\begin{abstract}
  Cheng, Gurski, and Riehl constructed a cyclic double multicategory
  of multivariable adjunctions.  We show that the same information is
  carried by a poly double category, in which opposite categories are
  polycategorical duals.  Moreover, this poly double category is a
  full substructure of a double Chu construction, whose objects are a
  sort of polarized category, and which is a natural home for
  2-categorical dualities.

  We obtain the double Chu construction using a general
  ``Chu-Dialectica'' construction on polycategories, which includes
  both the Chu construction and the categorical Dialectica
  construction of de Paiva.  The Chu and Dialectica constructions each
  impose additional hypotheses making the resulting polycategory
  representable (hence $\ast$-autonomous), but for different reasons;
  this leads to their apparent differences.
\end{abstract}

\setcounter{tocdepth}{1}
\tableofcontents

\section{Introductions}
\label{sec:introduction}

I have written two introductions to this paper, each of which can be
read independently.  If you are interested in Dialectica and Chu
constructions, please continue with \cref{sec:intro-dc}; but if you
are more interested in multivariable adjunctions, I suggest skipping
ahead to read \cref{sec:intro-mvar} first.

\subsection{First Introduction: Unifying the Dialectica and Chu constructions}
\label{sec:intro-dc}

The categorical Dialectica constructions were introduced
by~\cite{depaiva:dialectica,depaiva:dialectica-like} as an abstraction
of G\"{o}del's ``Dialectica interpretation''~\cite{godel:dialectica}.
Although G\"{o}del's interpretation modeled intuitionistic logic, de
Paiva's categorical analysis revealed that it factors naturally
through Girard's \emph{classical linear} logic~\cite{girard:ll}, which
categorically means a $\ast$-autonomous
category~\cite{barr:staraut-ll,cs:wkdistrib}.

On the other hand, the Chu
construction~\cite{chu:construction,chu:constr-app,barr:chu-history}
was introduced specifically as a way to produce $\ast$-autonomous
categories.  Anyone familiar with both constructions can tell that
they have a very similar feel, and one formal functorial comparison
was given in~\cite{paiva:dialectica-chu}.  In this paper we compare
them in a new way, by giving such a general construction that includes
both Dialectica and Chu constructions as special cases.

One reason it is hard to compare the Dialectica and Chu constructions
is that while their underlying categories are defined very similarly,
their monoidal structures are defined rather differently.  This
suggests that a fruitful way to compare them would be to perform them
both in a more general context where these monoidal structures need
not exist, but can be characterized up to isomorphism by universal
properties.  In other words, instead of monoidal categories we will
use \emph{multicategories}, and instead of $\ast$-autonomous
categories we will use
\emph{polycategories}~\cite{szabo:polycats}.\footnote{A
  polycategorical viewpoint on the Chu construction, in the case of
  non-symmetric ``cyclic'' poly-bicategories, can also be found
  in~\cite[Example 1.8(2)]{cks:polybicats}.  In this paper we will
  consider only the symmetric case.}

To define a multi- or polycategorical version of the Dialectica or Chu
constructions, we need to start by asking what universal property is
possessed by their tensor products, i.e.\ what functor they represent,
in the way that the tensor product of abelian groups represents
bilinear maps.  In other words, if $\tens$ denotes these tensor
products, then what does a morphism $A\tens B \to C$ look like if we
``beta-reduce away'' the definition of $\tens$?

First, let us consider the Chu construction, which in its basic form
applies to a closed symmetric monoidal category \bC with pullbacks,
equipped with an arbitrary object $\Omega$.  In the resulting category
$\chu(\bC,\Omega)$,
\begin{itemize}
\item The objects are triples $A = (A\p,A\m,\uA)$ where $A\p,A\m$ are objects of \bC and $\uA : A\p \otimes A\m \to \Omega$ is a morphism in \bC.
\item The morphisms $f:(A\p,A\m,\uA)\to (B\p,B\m,\uB)$ are pairs $(f\p,f\m)$ where $f\p : A\p \to B\p$ and $f\m : B\m \to A\m$ are morphisms in \bC such that
  \[\uA \circ (1 \otimes f\m) = \uB \circ (f\p \otimes 1).\]
\end{itemize}
The tensor product of two objects $A,B\in\chu(\bC,\Omega)$ is defined by
\begin{align*}
  (A\tens B)\p &= A\p \otimes B\p \\
  (A\tens B)\m &= [A\p,B\m] \times_{[A\p\otimes B\p,\Omega]} [B\p,A\m]\\
  \ul{A\tens B} &= \scriptstyle \Big(\big([A\p,B\m] \times_{[A\p\otimes B\p, \Omega]} [B\p,A\m]\big)\otimes A\p \otimes B\p
    \to [A\p\otimes B\p, \Omega] \otimes A\p \otimes B\p
    \to \Omega\Big)
\end{align*}
By definition of the morphisms in $\chu(\bC,\Omega)$, a morphism
$A\tens B \to C$ consists of \bC-morphisms
$f\p : A\p\otimes B\p \to C\p$ and
$f\m : C\m \to [A\p,B\m] \times_{[A\p\otimes B\p,\Omega]} [B\p,A\m]$
such that some diagram commutes.  But by the universal property of
pullbacks and internal-homs, to give $f\m$ is equivalent to giving
$f\m_1:A\p \otimes C\m \to B\m$ and $f\m_2 : B\p \otimes C\m \to A\m$
such that
$\uB \circ (1_{B\p} \otimes f\m_1) = \uA \circ (1_{A\p}\otimes f\m_2)$
(modulo a symmetry in the domain).  And the commutative diagram then
simply asserts that this joint composite is also equal (again, modulo
symmetry) to $\uC \circ (f\p \otimes 1_{C\m})$.

Thus, in total a morphism $A\tens B \to C$ in $\chu(\bC,\Omega)$ consists of morphisms
\begin{align*}
  f\p &: A\p\otimes B\p \to C\p\\
  f\m_1 &:A\p \otimes C\m \to B\m\\
  f\m_2 &: B\p \otimes C\m \to A\m
\end{align*}
such that
\[ \uB \circ (1_{B\p} \otimes f\m_1) = \uA \circ (1_{A\p}\otimes f\m_2) = \uC \circ (f\p \otimes 1_{C\m}). \]
There are several things to note about this:
\begin{itemize}
\item It is certainly a ``two-variable'' generalization of the definition of ordinary morphisms $A\to B$ in $\chu(\bC,\Omega)$.
\item It makes sense even if $\bC$ is only a multicategory, with $f\p : (A\p,B\p) \to C\p$ and so on.
\item With a little thought, one can guess the correct $n$-variable version, 
  % and verify that it corresponds to maps out of an $n$-ary tensor product.
  % One can also
  dualize to describe maps $A\to B\cotens C$, where $B\cotens C = (B^*\tens C^*)^*$ is the dual ``cotensor product'' (the ``par'' of linear logic), and then generalize to maps from an $n$-ary tensor to an $m$-ary cotensor.
  This leads to our polycategorical definition.
\item If we write the above equalities in the internal type theory of \bC, using formal variables $a:A\p$, $b:B\p$, and $c:C\m$, they become
\[ \uC(f\p(a,b),c) = \uB(b,f\m_1(a,c)) = \uA(a,f\m_2(b,c)), \]
which is highly reminiscent of the hom-set isomorphisms in a \emph{two-variable adjunction}.
We will pick up this thread in \cref{sec:intro-mvar}.
%  relating three functors $f:A\times B\to C$ and $g:A\op \times C\to B$, and $h:B\op\times C\to A$:
% \[ C(f(a,b),c) \cong B(b,g(a,c)) \cong A(a,h(b,c)), \]
% such as the tensor product and internal-homs of a closed monoidal category, or the tensor/hom/cotensor situation of an enriched category.
\end{itemize}

Moving on to the Dialectica construction, we will describe the version
from~\cite{paiva:dialectica-chu}, which looks the most like the Chu
construction.  This Dialectica construction applies to a closed
symmetric monoidal category \bC with finite products, equipped with an
object $\Omega$ that internally has the structure of a closed monoidal
poset.  In the resulting category $\dial(\bC,\Omega)$,
\begin{itemize}
\item The objects are the same as those of $\chu(\bC,\Omega)$: triples $A = (A\p,A\m,\uA)$ where $A\p,A\m$ are objects of \bC and $\uA : A\p \otimes A\m \to \Omega$ is a morphism in \bC.
\item The morphisms $f:(A\p,A\m,\uA)\to (B\p,B\m,\uB)$ are pairs $(f\p,f\m)$ where $f\p : A\p \to B\p$ and $f\m : B\m \to A\m$ are morphisms in \bC such that
  \[\uA \circ (1 \otimes f\m) \le \uB \circ (f\p \otimes 1)\]
  in the internal order of $\Omega$ (applied pointwise to morphisms $A\p\otimes B\m\to \Omega$).
\end{itemize}
The tensor product of two objects $A,B\in\dial(\bC,\Omega)$ is defined by
\begin{align*}
  (A\tens B)\p &= A\p \otimes B\p \\
  (A\tens B)\m &= [A\p,B\m] \times [B\p,A\m]
\end{align*}
with $\ul{A\tens B}$ being the tensor product (in the internal monoidal structure of $\Omega$) of the two morphisms
\begin{gather*}
  A\p \otimes B\p \otimes ([A\p,B\m] \times [B\p,A\m])
  \to A\p \otimes B\p \otimes [A\p,B\m]
  \to B\p \otimes B\m
  \xto{\uB} \Omega\\
  A\p \otimes B\p \otimes ([A\p,B\m] \times [B\p,A\m])
  \to A\p \otimes B\p \otimes [B\p,A\m]
  \to A\p \otimes A\m
  \xto{\uA} \Omega
\end{gather*}
Now by definition of the morphisms in $\dial(\bC,\Omega)$, a morphism
$A\tens B \to C$ consists of \bC-morphisms
$f\p : A\p\otimes B\p \to C\p$ and
$f\m : C\m \to [A\p,B\m] \times [B\p,A\m]$ such that some inequality
holds.  But by the universal property of products and internal-homs,
to give $f\m$ is equivalent to giving $f\m_1:A\p \otimes C\m \to B\m$
and $f\m_2 : B\p \otimes C\m \to A\m$, and the inequality then asserts
that
\begin{equation}
  (\uA \circ (1_{A\p}\otimes f\m_2)) \tens (\uB \circ (1_{B\p} \otimes f\m_1)) \le (\uC \circ (f\p \otimes 1_{C\m}))\label{eq:diale21}
\end{equation}
in the internal order of $\Omega$ (applied pointwise to morphisms $A\p\otimes B\p\otimes C\m \to \Omega$).
We now note similarly that:
\begin{itemize}
\item This is also certainly a ``two-variable'' generalization of the definition of ordinary morphisms $A\to B$ in $\dial(\bC,\Omega)$.
\item It also makes sense if $\bC$ is only a multicategory, with $f\p : (A\p,B\p) \to C\p$ etc.
\item In fact, it makes sense even if $\Omega$ is only a \emph{multi-poset} (a multicategory having at most one morphism with any given domain and codomain, just as a poset is a category with this property), with~\eqref{eq:diale21} replaced by
\begin{equation}
  \big(\uA \circ (1_{A\p}\otimes f\m_2) \,,\, \uB \circ (1_{B\p} \otimes f\m_1)\big) \le (\uC \circ (f\p \otimes 1_{C\m}))\label{eq:diale21m}
\end{equation}
\item One can again guess the correct $n$-to-$m$-variable version and write down a polycategorical definition, with $\Omega$ replaced by a \emph{poly-poset} (a {polycategory} having at most one morphism in each hom-set).
\end{itemize}

Furthermore, the descriptions of morphisms $A\tens B \to C$ in
$\chu(\bC,\Omega)$ and $\dial(\bC,\Omega)$ are very similar, indeed
they are related in essentially the same way as the descriptions of
ordinary morphisms $A\to B$.  Specifically, the Chu construction asks
for an \emph{equality}, while the Dialectica construction asks for an
\emph{inequality} --- where an ``inequality'' between more than two
elements is interpreted with respect to a multi-poset or poly-poset
structure.

This leads to our common generalization: just as equalities
$\phi=\psi$ are inequalities in a discrete poset (where $\phi\le \psi$
is defined to mean $\phi=\psi$), ``multi-variable equalities''
$\phi=\psi=\xi$ can be regarded as ``multi-variable inequalities'' in
a ``discrete poly-poset'', where an inequality $(\phi,\psi)\le (\xi)$
is \emph{defined to mean} $\phi=\psi=\xi$.  Thus, \emph{the
  polycategorical Dialectica construction includes the polycategorical
  Chu construction}.  The reason the original constructions look
different is that they make different additional assumptions, each of
which implies that the polycategorical result is ``representable'' and
hence defines a $\ast$-autonomous category --- but this
representability happens in \emph{different ways} for the original
Dialectica and Chu constructions.

In fact, we will generalize further in a few ways:
\begin{itemize}
\item We allow $\Omega$ to be a poly\emph{category} rather than a poly\emph{poset}, i.e.\ our construction will be ``proof-relevant'' in the strongest sense.
\item We will replace the object $\Omega$ by a not-necessarily-representable presheaf with the same structure.
  This allows us to include the original Dialectica constructions~\cite{depaiva:dialectica,depaiva:dialectica-like}, where instead of morphisms into $\Omega$ we use subobjects, without supposing $\bC$ to have a subobject classifier.
\item We will generalize the output of the construction to be a $\bC$-indexed family of polycategories rather than a single one, as in~\cite{biering:dialectica,hofstra:dialectica}.
  This amounts to building a model of first-order rather than merely propositional linear logic.
\end{itemize}
Taken together, these generalizations imply that the output of our
``Chu-Dialectica construction'' is the same kind of thing as its
input: a multicategory equipped with a presheaf of polycategories,
which we call a \emph{virtual linear hyperdoctrine}.  I do not know
whether this endomorphism of the category of virtual linear
hyperdoctrines has a universal property
(see~\cite{pavlovic:chu-i,hofstra:dialectica} for universal properties
of the Chu and Dialectica constructions respectively).

From the perspective of higher category theory, we can regard our
construction as a categorification.  In the original Chu construction,
$\Omega$ is a discrete object, i.e.\ a 0-category.  In the original
Dialectica construction, $\Omega$ is a posetal object, a.k.a.\ a
$(0,1)$-category (where a set or 0-category is more verbosely called a
$(0,0)$-category).  Our construction (as well as other categorified
Dialectica constructions,
e.g.~\cite{biering:dialectica,hofstra:dialectica}) allows $\Omega$ to
be a categorical object, i.e.\ a $(1,1)$-category.

This suggests that our construction should also specialize to a
version involving $(1,0)$-categories, i.e.\ groupoids.  It seems
appropriate to call this a \emph{2-Chu construction}, since it
replaces the equalities in the ordinary Chu constructions by
\emph{isomorphisms}.  The ``prototypical'' 2-Chu construction
$\tchu(\cat,\Set)$ (which directly categorifies the prototypical Chu
construction $\chu(\Set,2)$) is particularly interesting as its
morphisms are a ``polarized'' sort of \emph{multivariable adjunction}.

The second introduction to the paper, which follows, reverses the flow
of motivation by \emph{starting} with multivariable adjunctions.

\subsection{Second Introduction: The polycategory of multivariable adjunctions}
\label{sec:intro-mvar}

In view of the well-known importance of adjunctions in category
theory, it is perhaps surprising that it has taken so long for
\emph{multivariable adjunctions} to be systematically studied.  To be
sure, \emph{two}-variable adjunctions have a long history, and include
some of the earliest examples of adjunctions.  For instance, in a
biclosed monoidal category each functor $(A\otimes -)$ has a right
adjoint $[A,-]^l$, and each functor $(-\otimes B)$ has a right adjoint
$[B,-]^r$; but this is more symmetrically expressed by saying that the
two-variable functor $\otimes$ has $[-,-]^l$ and $[-,-]^r$ as
two-variable right adjoints.  To be precise, in this case we have
three functors
\begin{mathpar}
  \otimes : \cA \times \cA \to \cA \and
  [-,-]^l : \cA\op \times \cA \to \cA \and
  [-,-]^r : \cA\op \times \cA \to \cA \and
\end{mathpar}
with natural isomorphisms
\[ \cA(A\otimes B,C) \cong \cA(B,[A,C]^l) \cong \cA(A,[B,C]^r). \]
In general, a two-variable adjunction $(\cA,\cB)\to\cC$ consists of functors
\begin{mathpar}
  f:\cA\times \cB\to \cC\and
  g:\cA\op\times \cC\to \cB\and
  h:\cB\op\times \cC\to \cA
\end{mathpar}
and natural isomorphisms
\[ \cC(f(a,b),c) \cong \cB(b,g(a,c)) \cong \cA(a,h(b,c)).\] In
addition to biclosed monoidal structures, another well-known example
is the ``tensor-hom-cotensor'' (or ``copower-hom-power'') situation of
an enriched category, which inspired the terminology
\emph{THC-situation} for the general case
in~\cite{gray:thc-cc-laxlim}.  The name \emph{adjunction of two
  variables} from~\cite{hovey:modelcats} was shortened
in~\cite{riehl:mon-ams,cgr:cyclic} to \emph{two-variable adjunction};
in~\cite{guitart:trijunctions} the term used is \emph{trijunction}
(though see below).

Of course when we have one-variable and two-variable versions of
something, it is natural to expect an $n$-variable version.  If we go
back to the fact that the functors $g$ and $h$ in a two-variable
adjunction are determined up to unique isomorphism by $f$, we can
define an \emph{$n$-variable adjunction} $(\cA_1,\dots,\cA_n) \to \cB$
to be a functor $\cA_1\times\cdots \times \cA_n \to \cB$ such that if
we fix its value on all but one (say $\cA_i$) of the input categories,
the resulting functor $\cA_i \to \cB$ has a right adjoint.  Each such
right adjoint then automatically becomes contravariantly functorial on
the categories $\cA_j$ for $j\neq i$.
% For instance, if we unwind this when $n=3$ we see that a three-variable adjunction $(\cA,\cB,\cD) \to \cE$ consists of functors
% \begin{mathpar}
%   f: \cA\times\cB\times\cD\to\cE\\
%   g: \cA\op \times \cB\op\times \cE\to\cD\and
%   h: \cA\op\times \cD\op\times\cE\to\cB\and
%   k : \cB\op\times\cD\op\times \cE\to\cA
% \end{mathpar}
% and natural isomorphisms
% \[ \cE(f(a,b,d),e) \cong \cD(d,g(a,b,e)) \cong \cB(b,h(a,d,e)) \cong \cA(a,k(b,d,e)). \]

Unsurprisingly, multivariable adjunctions of this sort can be
assembled into a multicategory: we can compose a two-variable
adjunction $(\cA, \cB)\to\cC$ with another one $(\cC,\cD)\to\cE$ to
obtain a three-variable adjunction $(\cA,\cB,\cD)\to\cE$, and so on.
However, one-variable adjunctions are the morphisms not only of a
category but of a 2-category $\cAdj$, whose 2-cells are mate-pairs of
natural transformations.  More generally, one-variable adjunctions
form the horizontal morphisms in a \emph{double} category $\dadj$,
whose vertical morphisms are single functors, and whose 2-cells are a
more general kind of mate-pairs.  (Recall that if $f\dashv g$ and
$h\dashv k$ are adjunctions, then the ``mate correspondence'' is a
bijection between natural transformations $f u \to v h$ and
$u k \to g v$ obtained by pasting with the adjunction unit and counit.
The functoriality of this bijection is conveniently expressed in terms
of the double category $\dadj$; see~\cite{ks:r2cats}.)

The first step towards a similar calculus for multivariable
adjunctions was taken by~\cite{cgr:cyclic}, who exhibited them as the
horizontal\footnote{Actually, their double categories are transposed
  from ours, so for them the multivariable adjunctions are the
  \emph{vertical} morphisms.} morphisms in a \emph{cyclic multi double
  category}\footnote{They called it a \emph{cyclic double
    multicategory}, but the phrase ``double multicategory'' may
  suggest an internal multicategory in multicategories rather than the
  intended meaning of an internal category in multicategories, so we
  have chosen to order the modifiers differently.} $\madj$ (i.e.\ an
internal category in the category of cyclic multicategories).  The
multicategory structure of $\madj$ is unsurprising.  Its vertical
arrows of $\madj$ are functors and its 2-cells are natural
transformations, while its cyclic structure encodes a calculus of
\emph{multivariable mates}, describing the behavior of multivariable
adjunctions with respect to passage to opposite categories.  In
general, a cyclic structure on a multicategory consists of an
involution $(-)\d$ on objects together with a cyclic action on
morphism sets
\[ \cM(A_1,\dots,A_n ; B) \to \cM(A_2,\dots,A_n,B\d; A_1\d) \]
satisfying appropriate axioms.  In $\madj$ we define $\cA\d = \cA\op$,
and the cyclic action generalizes the observation that a two-variable
adjunction $(\cA, \cB)\to\cC$ is essentially the same as a
two-variable adjunction $(\cC\op,\cA)\to \cB\op$ or
$(\cB,\cC\op)\to \cA\op$.  The extension of this cyclic action to
2-cells then encodes the mate correspondence.

In practice, three- and higher-variable adjunctions seem to arise
mainly as composites of two-variable adjunctions.  But the whole
multicategory structure is nevertheless useful, because it gives an
abstract context in which to express conditions and axioms regarding
such composites.  For instance, the associativity of the tensor
product in a closed monoidal category has an equivalent form involving
the internal-hom~\cite{ek:closed-cats}; they are 2-cells in $\madj$
related by the mate correspondence.  Put differently, just as a
monoidal category can be defined as a pseudomonoid in the 2-category
$\cat$, a \emph{closed} monoidal category can be defined as a
pseudomonoid in $\tmadj$, the horizontal 2-multicategory of $\madj$.
Similarly, a module over a pseudomonoid $A$ in $\tmadj$ is an
$A$-enriched category with powers and copowers, and so on.

In this paper I will propose a different viewpoint on $\madj$: rather
than a cyclic multicategory, we can regard it as a
\emph{polycategory}.  A polycategory is like a multicategory, but it
allows the \emph{codomain} of a morphism to contain multiple objects,
as well as the domain; thus we have morphisms like
$f: (A,B) \to (C,D)$.  Such morphisms can be composed only ``along
single objects'', with the ``leftover'' objects in the codomain of $f$
and the domain of $g$ surviving into the codomain and domain of
$g\circ f$.  For instance, given $f: (A,B) \to (C,D)$ and
$g:(E,C) \to (F,G)$ we get $g\circ_C f : (E,A,B) \to (F,G,D)$.
% The usual examples of polycategories are are rather removed from the experience of most mathematicians, but the definition is exactly what we need for multivariable adjunctions.

What is a multivariable adjunction
$(\cA_1,\dots,\cA_m) \to(\cB_1,\dots,\cB_n)$?  There are several ways
to figure out the answer.  One is to inspect the definition of a
multivariable adjunction $(\cA_1,\dots,\cA_m) \to \cB_1$ and rephrase
it in a way that doesn't depend on the assumption $n=1$.  The functors
involved in such an adjunction are
\begin{gather*}
  f_1 : \cA_1\times\cdots\times\cA_m \to \cB_1\\
  g_i : \cA_1\op\times\cdots \widehat{\cA_i\op} \cdots\times \cA_m\op
  \times \cB_1 \to \cA_i \mathrlap{\qquad (1\le i\le m)}
\end{gather*}
where $\widehat{\cA_i\op}$ indicates that ${\cA_i\op}$ is omitted.
This can be described as ``for each category $\cA_i$ or $\cB_j$, a
functor with that codomain, whose domain is the product of all the
other categories, with opposites applied to those denoted by the same
letter as the codomain''.  That is, the functor $g_i$ with codomain
$\cA_i$ depends contravariantly on all the other $\cA$'s and
covariantly on the (single) $\cB$, while the functor $f$ with codomain
$\cB_1$ depends contravariantly on the (zero) other $\cB$'s and
covariantly on all the $\cA$'s.  If we apply this description in the
case $n>1$ as well, we see that a multivariable adjunction
$(\cA_1,\dots,\cA_m) \to(\cB_1,\dots,\cB_n)$ should involve functors
\begin{gather*}
  f_j : \cA_1\times\cdots\times\cA_m \times \cB_1\op \times\cdots \widehat{\cB_j\op} \cdots\times \cB_n\op \to \cB_j \mathrlap{\qquad (1\le j\le n)}\\
  g_i : \cA_1\op\times\cdots \widehat{\cA_i\op} \cdots\times \cA_m\op \times \cB_1 \times \cdots\times \cB_n \to \cA_i \mathrlap{\qquad (1\le i\le m)} 
\end{gather*}
with an appropriate family of natural isomorphisms.
For instance, a multivariable adjunction $(\cA,\cB) \to (\cC,\cD)$ consists of four functors
\begin{mathpar}
  f:\cC\op\times \cA\times \cB \to \cD \and
  g:\cA \times \cB \times \cD\op \to \cC\\
  h: \cA\op\times \cC\times \cD\to \cB \and
  k: \cC\times \cD \times \cB\op\to \cA
\end{mathpar}
and natural isomorphisms 
\[ \cD(f(c,a,b),d) \cong \cC(g(a,b,d),c) \cong \cB(b,h(a,c,d)) \cong \cA(a,k(c,d,b)). \]

I find this definition quite illuminating already.  One of the odd
things about a two-variable adjunction, as usually defined, is the
asymmetric placement of opposites.  The polycategorical perspective
reveals that this arises simply from the asymmetry of having a 2-ary
domain but a 1-ary codomain: a ``$(2,2)$-variable adjunction'' as
above looks much more symmetrical.

With this definition of $(m,n)$-variable adjunctions in hand, it is a
nice exercise to write down a composition law making them into a
polycategory.  For instance, suppose in addition to
$(f,g,h,k) : (\cA,\cB) \to (\cC,\cD)$ as above, we have a two-variable
adjunction $(\ell,m,n) : (\cD,\cE)\to \cZ$ with
$\cZ(\ell(d,e),z) \cong \cD(d,m(e,z)) \cong \cE(e,n(d,z))$.  Then we
have a composite multivariable adjunction
$(\cA,\cB,\cE) \to (\cC,\cZ)$ defined by
\[ \cC(g(a,b,m(e,z)),c) \cong \cZ(\ell(f(c,a,b),e),z) \cong \cA(a,k(c,m(e,z),b)) \cong \cdots. \]

Of course, a $(1,1)$-variable adjunction is an ordinary adjunction,
while a $(2,1)$-variable adjunction is a two-variable adjunction as
above.  A $(2,0)$-variable adjunction $(\cA,\cB)\to ()$ consists of
functors $f:\cA\op\to \cB$ and $g:\cB\op\to \cA$ and a natural
isomorphism $\cB(b,f(a)) \cong \cA(a,g(b))$.  This is sometimes called
a \textbf{mutual right adjunction} or \textbf{dual adjunction}, and
arises frequently in examples, such as Galois connections betwen
posets or the self-adjunction of the contravariant powerset functor.
Similarly, a $(0,2)$-variable adjunction $() \to (\cA,\cB)$ is a
mutual left adjunction $\cB(f(a),b) \cong \cA(g(b),a)$.  Of course a
mutual right or left adjunction can also be described as an ordinary
adjunction between $\cA\op$ and $\cB$, or between $\cA$ and $\cB\op$,
but the choice of which category to oppositize is arbitrary; the
polycategorical approach respects mutual right and left adjunctions as
independent objects.\footnote{At this point I encourage the reader to
  stop and think for a while about what a $(0,0)$-variable adjunction
  should be.  The answer will be given in \cref{rmk:00}.}

More generally, an $(n,0)$-variable adjunction
$(\cA_1,\dots,\cA_n) \to ()$ is a ``mutual right multivariable
adjunction'' between $n$ contravariant functors
\[ f_i : \cA_{i+1}\times \cdots \times \cA_n \times \cA_1 \times
  \cdots \times \cA_{i-1}\to \cA_i\op.\] In fact, the ``mutual right''
version is the formal definition of $n$-variable adjunction given
in~\cite{cgr:cyclic} (and, in the case $n=3$, of ``trijunction''
in~\cite{guitart:trijunctions}).  This makes the cyclic structure more
apparent, but the enforced contravariance makes for a mismatch with
many standard examples.

A further advantage of the polycategorical framework is the way that
opposite categories enter the picture: rather than imposed by the
\emph{structure} of a cyclic action, they are characterized by a
universal \emph{property}.  Specifically, they are duals in the
polycategorical sense: we have multivariable adjunctions
$\eta : () \to (\cA,\cA\op)$ and $\ep : (\cA\op,\cA)\to ()$ satisfying
analogues of the triangle identities.  Opposite categories are also
dual objects in the monoidal bicategory of profunctors, but the
polycategory of multivariable adjunctions provides a new perspective,
which in particular characterizes them up to equivalence (not just
Morita equivalence).

In fact, the characterization of $\cA\op$ as a polycategorical dual of
$\cA$ encodes almost exactly the same information as the cyclic action
of~\cite{cgr:cyclic}.  Any polycategory $\cP$ with strict duals
(a.k.a.\ a ``$\ast$-polycategory''~\cite{hyland:pfthy-abs}) has an
underlying cyclic symmetric multicategory, in which the cyclic action
\[ \cP(A_1,\dots,A_n ; B) \to \cP(A_2,\dots,A_n,B\d; A_1\d) \] is
obtained by composing with $\ep_{B}$ and $\eta_{A_1}$.
% Just like with duals in a monoidal category, by composing with $\eta_\cA$ and $\ep_\cA$ we can translate back and forth between multivariable adjunctions $(\cA,\cB)\to\cC$ and $\cB \to (\cA\op,\cC)$, and then by further composing with $\eta_\cC$ and $\ep_\cC$ we can connect the latter to multivariable adjunctions $(\cB,\cC\op)\to\cA\op$.
Conversely, any cyclic symmetric multicategory $\cM$ can be extended to a polycategory by defining
\[ \cM(A_1,\dots, A_m ; B_1,\dots,B_n) =
  \cM(A_1,\dots,A_m,B_1\d,\dots,\widehat{B_j\d},\dots,B_{n}\d; B_j).\]
The cyclic structure ensures that this is independent, up to
isomorphism, of $j$.  The polycategorical composition can then be
induced from the multicategorical one and the cyclic action, and the
cyclic ``duals'' $A\d$ indeed turn out to be abstract polycategorical
duals.

Thus symmetric polycategories with duals are almost\footnote{See
  \cref{rmk:00} for why the ``almost''.} equivalent to cyclic
symmetric multicategories, and our polycategorical $\madj$ corresponds
under this almost-equivalence to the cyclic $\madj$
of~\cite{cgr:cyclic}.  This provides another \textit{a posteriori}
explanation of the definition of $(m,n)$-variable adjunctions: they
are exactly the morphisms in the polycategory we obtain by passing the
cyclic multicategory $\madj$ across this equivalence.  For instance,
the reader may check that a $(2,2)$-variable adjunction
$(\cA,\cB) \to (\cC,\cD)$ could equivalently be defined to be simply a
three-variable adjunction $(\cA,\cB,\cC\op) \to \cD$ (or,
equivalently, $(\cA,\cB,\cD\op)\to \cC$).

Finally, like the cyclic multicategory $\madj$ of~\cite{cgr:cyclic},
the polycategory $\madj$ is in fact a poly \emph{double} category
(meaning an internal category in the category of polycategories),
whose vertical arrows are functors and whose 2-cells are an
appropriate sort of multivariable mate tuple.  Thus, it is equally
appropriate for studying the multivariable mate correspondence.  It
also suggests new applications: for instance, in a 2-polycategory we
can define \emph{pseudo-comonoids} and \emph{Frobenius pseudomonoids},
and in a future paper~\cite{shulman:frobmvar} (building
on~\cite{ds:quantum,street:frob-psmon,egger:frob-lindist}) I will show
that Frobenius pseudomonoids in $\tmadj$ are $\ast$-autonomous
categories.

However, there is still something unsatisfying about the picture.  The
double category $\dadj$ of ordinary adjunctions can actually be
constructed out of internal adjunctions in \emph{any} 2-category $\sK$
instead of $\cat$; but it is unclear exactly what the analogous
statement should be for multivariable adjunctions.  In particular, the
definition of multivariable adjunction involves the notion of
\emph{opposite category}, which despite its apparent simplicity is
actually one of the more mysterious and difficult-to-abstract
properties of $\cat$.  At the ``one-variable'' level it is simply a
2-contravariant involution
$\cat\co\cong \cat$~\cite{shulman:contravariance}, but its
multivariable nature is still not fully understood (despite important
progress such as~\cite{ds:monbi-hopfagbd,weber:2toposes}).

However, it turns out that we can avoid this question entirely if we
are willing to settle for constructing something rather \emph{larger}
than $\madj$.  Upon inspection, the definition of multivariable
adjunction uses very little information about the relation of a
category to its opposite: basically nothing other than the existence
of the hom-functors $\cA\op \times \cA \to \Set$, and nothing at all
about the structure of their codomain $\Set$.  Thus, instead of trying
to \emph{characterize} the opposite of a category, we can simply
consider ``categories equipped with a formal opposite''.

Let $\sK$ be a symmetric monoidal 2-category with a specified object
$\Omega$.  We define an \emph{$\Omega$-polarized object} to be a
triple $(A\p,A\m,\uA)$ where $A\p,A\m$ are objects of \sK and
$\uA : A\p \otimes A\m \to \Omega$.  If $\sK=\cat$ and $\Omega=\Set$,
every category $\cA$ induces a \emph{representable} $\Set$-polarized
object $\rep \cA$ with $\rep \cA\p = \cA\op$, $\rep \cA\m=\cA$, and
$\underline{\rep \cA} = \hom_\cA$ (but see \cref{rmk:duality}).

A \emph{polarized adjunction} $f:A\to B$ between polarized objects
consists of morphisms $f\p:A\p\to B\p$ and $f\m:B\m\to A\m$ and an
isomorphism $\uA \circ (1\otimes f\m) \cong \uB \circ (f\p\otimes 1)$.
Similarly, a \emph{polarized two-variable adjunction} $(A,B) \to C$
consists of morphisms
\begin{mathpar}
  f:A\p\otimes B\p \to C\p\and
  g:A\p\otimes C\m \to B\m\and
  h:B\p\otimes C\m\to A\m
\end{mathpar}
and isomorphisms (modulo appropriate symmetric actions)
\[ \uA \circ (1\otimes h) \cong \uB \circ (1\otimes g) \cong \uC \circ
  (f\otimes 1). \] We can similarly define polarized $(n,m)$-variable
adjunctions and assemble them into a polycategory.  More generally, we
can take them to be the horizontal morphisms in a poly \emph{double}
category $\polmadj(\sK,\Omega)$; its vertical morphisms are
\emph{polarized functors} $h:A\to B$ consisting of morphisms
$f\p:A\p\to B\p$ and $f\m:A\m\to B\m$ (note that both go in the same
direction) and a 2-cell $\uA \Rightarrow \uB \circ (f\p\otimes f\m)$,
and its 2-cells are families of 2-cells in $\sK$ satisfying a
``polarized mate'' relationship.

In the case $\sK=\cat$, $\Omega=\Set$, a polarized adjunction between
representable polarized categories $\rep \cA \to \rep \cB$ reduces to
an ordinary adjunction, and likewise a polarized two-variable
adjunction $(\rep \cA, \rep \cB) \to\rep \cC$ reduces to an ordinary
two-variable adjunction.  More generally, we can say that
$\polmadj(\cat,\Set)$ contains our original $\madj$ as a
``horizontally full'' subcategory (but see \cref{rmk:duality}).  So
there is a general 2-categorical construction that at least comes
close to reproducing $\madj$.

On the other hand, $\polmadj(\cat,\Set)$ is also interesting in its
own right!  Its objects and vertical arrows are (modulo replacement of
$A\p$ by its opposite) the ``polarized categories'' and functors
of~\cite{cs:polarized}, which were studied as semantics for polarized
logic and games.  It also provides a formal context for \emph{relative
  adjunctions}, in which one or both adjoints are only defined on a
subcategory of their domain.  Furthermore, at least if $\sK$ is closed
monoidal with pseudo-pullbacks (like $\cat$), the polycategory
$\polmadj(\sK,\Omega)$ has (bicategorical) tensor and cotensor
products (the appropriate sort of ``representability'' condition for a
polycategory).

For instance, for polarized objects $A,B$ there is a polarized object
$A\tens B$ such that polarized two-variable adjunctions $(A,B)\to C$
are naturally equivalent to polarized \emph{one}-variable adjunctions
$A\tens B \to C$.  This universal property, like most others, tells us
how to construct $A\tens B$, as follows.  A polarized adjunction
$A\tens B \to C$ consists of morphisms $(A\tens B)\p \to C\p$ and
$C\m \to (A\tens B)\m$ together with a certain isomorphism; whereas in
a polarized two-variable adjunction $(A,B)\to C$ as above we can apply
the internal-hom isomorphism to obtain
\begin{mathpar}
  f:A\p\otimes B\p \to C\p\and
  \gtil :C\m \to [A\p,B\m]\and
  \htil :C\m\to [B\p,A\m].
\end{mathpar}
Comparing the two suggests $(A\tens B)\p = A\p \otimes B\p$ and
$(A\tens B)\m = [A\p,B\m] \times [B\p,A\m]$.  The first is correct,
but the second is not quite right: to incorporate the \emph{two}
isomorphisms of a two-variable adjunction, we have to let
$(A\tens B)\m$ be the pseudo-pullback
$[A\p,B\m] \times^{\mathrm{ps}}_{[A\p\otimes B\p,\Omega]} [B\p,A\m]$.
The third datum is the composite
\begin{align*}
  % (A\tens B)\p &= A\p \otimes B\p \\
  % (A\tens B)\m &= [A\p,B\m] \times_{[A\p\otimes B\p,\Omega]} [B\p,A\m]\\
 \ul{A\tens B} = \scriptstyle \Big(\big([A\p,B\m] \times^{\mathrm{ps}}_{[A\p\otimes B\p, \Omega]} [B\p,A\m]\big)\otimes A\p \otimes B\p
    \to [A\p\otimes B\p, \Omega] \otimes A\p \otimes B\p
    \to \Omega\Big).
\end{align*}
%and we can show that the remaining isomorphisms correspond.
There is a similar ``cotensor product'' $\cotens$ such that polarized
$(1,2)$-variable adjunctions $A\to (B,C)$ are equivalent to polarized
adjunctions $A \to B\cotens C$.  We also have duals defined by
$(A\p,A\m,\uA)\d = (A\m,A\p,\uA\sigma)$, where $\sigma$ is
transposition of inputs; note that $\rep{\cA\op}= \rep{\cA}\d$.  Thus,
the horizontal 2-category of $\polmadj(\sK,\Omega)$ is actually a
\emph{$\ast$-autonomous 2-category}\footnote{The tensor product is
  only bicategorically associative and unital.  Fortunately, we can
  avoid specifying all the coherence axioms involved in an explicit
  up-to-isomorphism $\ast$-autonomous structure on a monoidal
  bicategory by simply noting that we have a 2-polycategory with
  tensor and cotensor products that satisfy an up-to-equivalence
  universal property.  As usual, structure that is characterized by a
  universal property is automatically ``fully coherent''.

  Our $\ast$-autonomous 2-categories are unrelated to the ``linear
  bicategories'' of~\cite{cks:linbicat}, which are instead a
  ``horizontal'' or ``many-objects''
  categorification.}~\cite{barr:staraut}.

It turns out that this structure is a categorification of a
well-studied one.  If $\sK$ is a closed symmetric monoidal
\emph{1-category} with pullbacks, then all the isomorphisms degenerate
to equalities, and the $\ast$-autonomous category of
``$\Omega$-polarized objects'' is precisely the \emph{Chu
  construction}~\cite{chu:construction,chu:constr-app,barr:chu-history}
$\chu(\sK,\Omega)$.  Thus, the horizontal 2-category of
$\polmadj(\sK,\Omega)$ is a \emph{2-Chu construction}
$\tchu(\sK,\Omega)$, while the whole double category
$\polmadj(\sK,\Omega)$ can be called a \emph{double Chu construction};
we denote it by $\dchu(\sK,\Omega)$.  Thus in particular we have
$\dchu(\cat,\Set) = \polmadj(\cat,\Set)$.

This connection also suggests other applications of
$\tchu(\cat,\Set)$.  As a categorification of the prototypical 1-Chu
construction $\chu(\Set,2)$, which is an abstract home for many
concrete dualities, we may expect $\tchu(\cat,\Set)$ to be an abstract
home for concrete 2-categorical dualities.  For instance,
Gabriel-Ulmer duality~\cite{gu:locpres} between finitely complete
categories and locally finitely presentable categories sits inside
$\tchu(\cat,\Set)$ just as Stone duality between Boolean algebras and
Stone spaces sits inside $\chu(\Set,2)$~\cite{pbb:cat-duality}.  There
are other applications as well; see \cref{sec:2-chu}.

There remains, however, the problem of constructing
$\dchu(\sK,\Omega)=\polmadj(\sK,\Omega)$ in general: we need a
systematic way to deal with all the isomorphisms.  For instance, in
defining a $(2,2)$-variable adjunction we wrote
\[ \cD(f(c,a,b),d) \cong \cC(g(a,b,d),c) \cong \cB(b,h(a,c,d)) \cong
  \cA(a,k(c,d,b)) \] but there is no justifiable reason for
privileging these three isomorphisms over all the ${4\choose 2} = 6$
possible pairwise isomorphisms; what we really mean is that these four
profunctors are ``all coherently isomorphic to each other''.  There
are many ways to deal with this, but a particularly elegant approach
is to first formulate a ``lax'' version of the structure in which the
isomorphisms are replaced by directed transformations.  This clarifies
exactly how the isomorphisms ought to be composed, since the
directedness imposes a discipline that allows only certain composites.

In our case, we choose to regard the above family of coherent
isomorphisms as a ``morphism'' relating the four profunctors, and the
natural way to separate the four into domain and codomain is by
copying the analogous division for the multivariable adjunction
itself, with $\cA,\cB$ in the domain and $\cC,\cD$ in the codomain:
\[ \big(\cA(a,k(c,d,b)), \, \cB(b,h(a,c,d))\big) \to
  \big(\cC(g(a,b,d),c) ,\,\cD(f(c,a,b),d)\big). \] Thus, these
morphisms must themselves live in some polycategory.  This suggests
that the ``lax 2-Chu construction'' should apply to a 2-category \sK
containing an object $\Omega$ that is an internal polycategory, with
the ordinary 2-Chu construction recovered by giving $\Omega$ a sort of
``discrete'' polycategory structure in which a morphism
$(\phi,\psi) \to (\xi,\ze)$ consists of a coherent family of
isomorphisms between $\phi,\psi,\xi,\ze$.

This is indeed what we will do.  (We will also generalize in a couple
of other ways, replacing $\Omega$ by a not-necessarily-representable
presheaf, and enhancing the output to an indexed family of
polycategories rather than a single one.)  Intriguingly, it turns out
that while the 2-Chu construction yields a polycategory that is
representable under certain assumptions on $\sK$, the \emph{lax} 2-Chu
construction yields a polycategory that can naturally be shown to be
representable under \emph{different} assumptions on $\sK$.  Moreover,
it is also well-known under a different name: it is one of the
categorical \emph{Dialectica
  constructions}~\cite{depaiva:dialectica,depaiva:dialectica-like,paiva:dialectica-chu}.

From a higher-categorical perspective, our lax 2-Chu construction has
categorified the ordinary Chu construction in two ways.  The latter
involves equalities, a 0-categorical structure.  We first replaced
these by isomorphisms, a groupoidal or ``$(1,0)$-categorical''
structure.  Then we made them directed, yielding a 1-categorical or
$(1,1)$-categorical structure.  By contrast, the Dialectica
construction is usually formulated at the other missing vertex
involving \emph{posets}, a.k.a.\ $(0,1)$-categories (though
1-categorical versions do appear in the literature,
e.g.~\cite{biering:dialectica,hofstra:dialectica}).

Because the representability conditions on the lax and pseudo 2-Chu
constructions are different, the Dialectica and Chu constructions,
though obviously bearing a family
resemblance~\cite{paiva:dialectica-chu}, have not previously been
placed in the same abstract context.  The polycategorical perspective
allows us to exhibit them as both instances of one ``2-Chu-Dialectica
construction'', which moreover includes the polycategory of
(polarized) multivariable adjunctions at the other vertex.  The first
introduction to this paper in \cref{sec:intro-dc}, which you can go
back and read now if you skipped it the first time, reverses the flow
of motivation by \emph{starting} with the question of how to compare
the Chu and Dialectica constructions.

\begin{rmk}\label{rmk:00}
  There is one small fly in the ointment.  The ``lax
  2-Chu-Dialectica'' construction that we will describe is
  \emph{strict}: it expects its input to involve strict
  2-multicategories and 2-polycategories and produces a similarly
  strict output.  This is convenient not just because it is easier,
  but because we can obtain the double-polycategorical version by
  appling it directly to internal categories.  However, there is one
  place where it is not fully satisfactory, involving the question of
  what a ``$(0,0)$-variable adjunction'' should be.

  This question is not answered by~\cite{cgr:cyclic}: the $(0,0)$-ary
  morphisms are the one place where a polycategory with duals contains
  more information than a cyclic multicategory.  Duals allow
  representing any $(n,m)$-ary morphism as an $(n+m-1,1)$-ary
  morphism, but only if $n+m>0$.  Thus, the underlying cyclic
  multicategory of a polycategory only remembers the $(n,m)$-ary
  morphisms for $n+m>0$.

  I claim that a $(0,0)$-variable adjunction should be simply \emph{a
    set}.  There are many ways to argue for this, including the
  following:
  \begin{itemize}
  \item The only way to produce a $(0,0)$-ary morphism in a polycategory is to compose a $(0,1)$-ary morphism with a $(1,0)$-ary one.
    Now a $(0,1)$-variable adjunction $a_1:()\to \cA$ and a $(1,0)$-variable adjunction $a_2:\cA\to ()$ are both just objects of $\cA$, one ``regarded covariantly'' and the other ``regarded contravariantly''.
    What can we get naturally from two such objects?
    Obviously, the hom-set $\cA(a_2,a_1)$.
  \item The unit object of the $\ast$-autonomous 2-category $\tchu(\cat,\Set)$ is $(1,\Set,\id_\Set)$, and its counit is $(\Set,1,\id_\Set)$.
    This can be seen by analogy to the 1-Chu construction, or by checking their universal property with respect to $(n,m)$-ary morphisms for $n+m>0$.
    But if these universal properties extend to $(0,0)$-ary morphisms, then a $(0,0)$-ary morphism must be the same as a polarized adjunction $(1,\Set,\id_\Set) \to (\Set,1,\id_\Set)$, which is (up to equivalence) a set.
  \item A multivariable adjunction $(A_1,\dots,A_n)\to(B_1,\dots,B_m)$ can equivalently be defined as a profunctor $A_1\times\cdots\times A_n \hto B_1 \times\cdots \times B_m$ that is representable in each variable, and a profunctor $1\hto 1$ is just a set.
  \end{itemize}

  However, I do not know of any way to define a \emph{strict}
  2-polycategory of multivariable adjunctions in which the $(0,0)$-ary
  morphisms are sets.  The problem can be seen as follows: suppose we
  have a $(0,1)$-variable adjunction $a:() \to \cA$ (i.e.\ an object
  $a\in \cA$), a $(1,1)$-variable adjunction $f:\cA \to \cB$ (notated
  $f\p\dashv f\m$), and a $(1,0)$-variable adjunction $b:\cB \to ()$
  (i.e.\ an object $b\in \cB$).  The composite $f\circ a : () \to \cB$
  can seemingly only be the object $f\p(a) \in \cB$, and hence
  $b\circ (f\circ a)$ must be the hom-set $\cB(f\p(a),b)$.  But the
  composite $b\circ f : \cA \to ()$ can seemingly only be the object
  $f\m(b)\in \cA$, and hence $(b\circ f) \circ a$ must be the hom-set
  $\cA(a,f\m(b))$, which is only \emph{isomorphic} to $\cB(f\p(a),b)$
  rather than equal to it.\footnote{One of the referees pointed out
    that at the level of the underlying 1-category $\chuz(\cat,\Set)$
    we could define the set of $(0,0)$-ary morphisms to be the set of
    \emph{isomorphism classes} of sets.  But this wouldn't work for
    the full 2-category $\tchu(\cat,\Set)$, since the composition
    functors yielding $(0,0)$-ary output must also act on
    non-invertible 2-cells.}

  In principle, it should be possible to give a ``pseudo'' version of
  the 2-Chu-Dialectica construction.  However, for now we simply
  ignore this question by defining the $(0,0)$-ary hom-category
  ``incorrectly'' to be the terminal category rather than $\Set$.
  Since $(0,0)$-ary morphisms in a polycategory cannot be composed
  with anything else (they have no objects to compose along), it is
  always possible to brutalize a polycategory by declaring there to be
  exactly one $(0,0)$-ary morphism without changing anything else
  (which, as we will see, can also be described as following a
  round-trip pair of adjoint functors through cyclic multicategories).
  For the same reasons, I do not know of any real use for $(0,0)$-ary
  morphisms; so however unsatisfying this cop-out is philosophically,
  it has little practical import.
\end{rmk}

\begin{rmk}\label{rmk:duality}
  I have been rather cavalier about variance in this informal
  introduction.  In fact there are \emph{two} natural ways to define a
  ``representable'' polarized category corresponding to an ordinary
  category \cA:
  \begin{mathpar}
    \repl{\cA} = (\cA\op,\cA,\hom_\cA) \and \repr{\cA} = (\cA,\cA\op,\hom_\cA).
  \end{mathpar}
  (Of course, the two functors denoted $\hom_\cA$ above take their
  arguments in opposite orders.)  The difference is that a polarized
  adjunction $f:\repl{\cA} \to \repl{\cB}$ is an adjunction
  $f\p : \cA \toot \cB : f\m$ in which $f\p:\cA\to\cB$ is the
  \emph{left} adjoint, while a polarized adjunction
  $g:\repr{\cA} \to \repr{\cB}$ is an adjunction
  $g\p : \cA \otto \cB : g\m$ in which $g\p : \cA\to\cB$ is the
  \emph{right} adjoint.  However, in \emph{both} cases a 2-cell
  between polarized adjunctions is a mate-pair of natural
  transformations considered as pointing in the direction of the
  transformation between the \emph{right} adjoints: $f\m \to (f')\m$
  or $g\p \to (g')\p$.

  Similarly, a polarized two-variable adjunction
  $(\repl{\cA},\repl{\cB}) \to \repl{\cC}$ is an ordinary two-variable
  adjunction $(\cA,\cB)\to\cC$ as described above, with a functor
  $f\p:\cA\times \cB\to \cC$ equipped with a pair of two-variable
  right adjoints; but the 2-cells between these go in the direction of
  the induced mates between the right adjoints.  A polarized
  two-variable adjunction $(\repr{\cA},\repr{\cB}) \to \repr{\cC}$, by
  contrast, is a functor $g\p:\cA\times \cB\to \cC$ equipped with a
  pair of two-variable \emph{left} adjoints, with the 2-cells pointing
  in the direction of the mates between the ``forwards'' functors,
  $g\p \to (g')\p$.

  Of course, the two conventions carry the same information, and are
  interchanged by duality: $\repl{\cA\op} = \repr{\cA}$.  In the above
  introduction I wrote $\rep{\cA}$ for $\repl{\cA}$, since the most
  familiar examples of multivariable adjunctions (e.g.\ closed
  monoidal structures) are generally considered to point in the
  direction of their left adjoints.  However, the fact that this
  choice flips the 2-cells makes it seem less natural from an abstract
  point of view, so \emph{in the rest of the paper I will change
    notation and write $\rep{\cA}$ for $\repr{\cA}$}; this also has
  the advantage of coinciding with the orientation of multivariable
  adjunctions chosen by~\cite{cgr:cyclic}.  In particular, this means
  that a $(2,0)$-variable adjunction $(\cA,\cB) \to ()$ is now a
  mutual \emph{left} adjunction, and dually.

  Another way to ``fix'' the orientation of 2-cells would be to use
  $\dchu(\cat,\Set\op)$ instead of $\dchu(\cat,\Set)$.  Then we could
  define $\rep{\cA}$ to be $(\cA,\cA\op,\hom_\cA{}\op)$ and have
  adjunctions point in the direction of left adjoints and 2-cells in
  the direction of transformations between these left adjoints.
  However, this would have the unaesthetic consequence that the
  ``correct'' category of $(0,0)$-ary morphisms, as in \cref{rmk:00},
  would be $\Set\op$ rather than $\Set$.  There seems to be no perfect
  solution.
\end{rmk}

\subsection{Outline}
\label{sec:outline}

We begin in \cref{sec:input} by defining the abstract input (and also
the output!)\ of our 2-Chu-Dialectica construction.  In \cref{sec:2cd}
we give the construction itself (the general $(1,1)$-categorical
case).  Then in \cref{sec:rep} we show how it specializes to one of
the Dialectica constructions (the $(0,1)$-categorical case), while in
\cref{sec:rep2-chu} we show how it specializes to the Chu construction
(the $(0,0)$-categorical case).  Finally, in \cref{sec:2-chu} we
specialize to the 2-Chu construction (the $(1,0)$-categorical case)
and enhance the result to a poly double category of polarized
multivariable adjunctions, and in \cref{sec:cyclic} we connect this
construction to the cyclic multi double category of~\cite{cgr:cyclic}.

\subsection{Acknowledgments}
\label{sec:acknowledgments}

I would like to thank Andrej Bauer and Valeria de Paiva for helping me
to understand the Dialectica construction; Sam Staton for pointing out
that relative adjunctions appear as morphisms in $\tchu(\cat,\Set)$;
and Emily Riehl, Philip Hackney, and Ren\'{e} Guitart for useful
conversations and feedback.

\section{Presheaves of polycategories}
\label{sec:input}

We first recall Szabo's~\cite{szabo:polycats} definition of
polycategory.  On the logical side, polycategories are a categorical
abstraction of the structural rules of classical linear
logic~\cite{girard:ll} (identity, cut, and exchange), while on the
categorical side they are related to $\ast$-autonomous
categories~\cite{barr:staraut} (and more generally linearly
distributive categories~\cite{cs:wkdistrib}) roughly in the same way
that multicategories are related to monoidal categories.

All of our polycategories and multicategories will be symmetric, so we
often omit the adjective.  If $\Gamma$ and $\Gamma'$ are finite lists
of the same length, by an \emph{isomorphism} $\si:\Gamma\toiso\Gamma'$
we mean a permutation of $\abs{\Gamma}$ that maps the objects in
$\Gamma$ to those in $\Gamma'$, i.e.\ if $\Gamma'= (A_1,\dots,A_n)$
then $\Gamma = (A_{\si 1},\dots,A_{\si n})$.

\begin{defn}
  A \textbf{symmetric polycategory} $\P$ consists of
  \begin{itemize}
  \item A set of objects.
  \item For each pair $(\Gamma, \Delta)$ of finite lists of objects, a set $\P(\Gamma; \Delta)$ of ``polyarrows'', which we may also write
    $f:\Gamma\to\Delta$.
  \item For each $\Gamma,\Gamma',\Delta,\Delta'$, and isomorphisms $\rho : \Gamma\toiso\Gamma'$ and $\tau:\Delta\toiso\Delta'$, an action
    \[\P(\Gamma; \Delta) \to \P(\Gamma'; \Delta')\]
    written $f\mapsto \tau f \rho$, that is functorial on composition of permutations.
  \item For each object $A$, an identity polyarrow $1_A \in \P(A; A)$.
  \item For finite lists of objects $\Gamma, \Delta_1, \Delta_2, \Lambda_1, \Lambda_2, \Sigma$, and object $A$, composition maps
    \[\P(\Lambda_1, A, \Lambda_2; \Sigma) \times \P(\Gamma; \Delta_1, A, \Delta_2) \to \P(\Lambda_1, \Gamma, \Lambda_2; \Delta_1, \Sigma, \Delta_2).\]
    We write this operation infix as $\circ_A$, if there is no risk of confusion. % (e.g. $A$ does not appear anywhere else), or $\comp j i$ if $\abs{\Delta_1}=i$ and $\abs{\Lambda_1}=j$.
  \item Axioms of associativity and equivariance:
  \begin{align}
    1_A \circ_A f &= f \label{eq:polyidl}\\
    f \circ_A 1_A &= f \label{eq:polyidr}\\
    (h \circ_B g) \circ_A f &= h \circ_B (g \circ_A f) \label{eq:polyassoc1}\\
    (h \circ_B g) \circ_A f &= \sigma((h \circ_A f) \circ_B g) \label{eq:polyassoc2}\\
    h \circ_B (g \circ_A f) &= (g \circ_A (h\circ_B f))\sigma \label{eq:polyassoc3}\\
    \tau_1 g \rho_1 \circ_A \tau_2 f \rho_2 &= \tau_3 (g\circ_A f) \rho_3 \label{eq:polyeqv}
  \end{align}
  where the associativity axioms~\eqref{eq:polyassoc1}, \eqref{eq:polyassoc2}, and~\eqref{eq:polyassoc3} apply whenever both sides make sense, with $\sigma$ in~\eqref{eq:polyassoc2} and~\eqref{eq:polyassoc3} chosen to make the equation well-typed, and in~\eqref{eq:polyeqv} the six permutations are related in a straightforward way that makes the equation well-typed.
  \end{itemize}
  %
  % A \textbf{functor} between polycategories acts on objects and polyarrows preserving all the operations.
  % A \textbf{transformation} has $(1,1)$-ary components $\alpha_A \in \cQ(FA;GA)$ satisfying a naturality condition with respect to all polyarrows in the domain.
\end{defn}

\begin{rmk}
  We can, and will, identify (symmetric) multicategories with
  (symmetric) polycategories that are \textbf{co-unary}, i.e.\
  $\P(\Gamma;\Delta)$ is empty unless $\abs\Delta=1$.
\end{rmk}

\begin{defn}\label{defn:tens}
  Let $A,B$ be objects of a (symmetric) polycategory $\P$.
  \begin{itemize}
  \item A \textbf{tensor product} of $A,B$ is an object $A\tens B$ with a morphism $(A,B) \to (A\tens B)$ such that the following precomposition maps\footnote{We take advantage of the symmetry of $\P$ to place the objects with universal properties last in the domain or first in the codomain.
    In the non-symmetric case, the tensor product isomorphism should be $\P(\Gamma_1,A\tens B, \Gamma_2;\Delta)\cong \P(\Gamma_1,A, B, \Gamma_2;\Delta)$, and so on.} are isomorphisms:
    \[ \P(\Gamma,A\tens B;\Delta)\toiso \P(\Gamma,A,B;\Delta). \]
  \item A \textbf{unit} is an object $\top$ with a morphism $() \to (\top)$ such that the following precomposition maps are isomorphisms:
    \[ \P(\Gamma,\top;\Delta) \toiso \P(\Gamma;\Delta). \]
  \item A \textbf{cotensor product}\footnote{I agree with~\cite{cs:wkdistrib} that the notations for tensor and cotensor products should be visually dual, but I find $\oplus$ has too strong a connotation of direct sums to use it for the cotensor product.
      Hence $\tens$ and $\cotens$.} of $A,B$ is an object $A\cotens B$ with a morphism $(A\cotens B)\to (A,B)$ such that the following postcomposition maps are isomorphisms:
    \[ \P(\Gamma;A\cotens B,\Delta)\toiso \P(\Gamma;A,B,\Delta). \]
  \item A \textbf{counit} is an object $\bot$ with a morphism $(\bot) \to ()$ such that the following postcomposition maps are isomorphisms:
    \[ \P(\Gamma;\bot,\Delta) \toiso \P(\Gamma;\Delta). \]
  \item A \textbf{dual} of $A$ is an object $A\d$ with morphisms $\eta: () \to (A,A\d)$ and $\ep: (A\d,A)\to ()$ such that
    $\ep \circ_A \eta = 1_{A\d}$ and $\ep\circ_{A\d} \eta = 1_A$.
  \item A \textbf{strong hom} of $A,B$ is an object $A\ihom B$ with a morphism $(A\ihom B,A) \to (B)$ such that the following precomposition maps are isomorphisms:
    \[ \P(\Gamma;\Delta,A\ihom B) \toiso \P(\Gamma,A;\Delta,B). \]
    It is a \textbf{weak hom} if this holds only when $\Delta=\emptyset$.
  \end{itemize}
\end{defn}

A polycategory is \textbf{representable} if it has all tensor
products, units, cotensor products, and counits; then it is
equivalently a \textbf{linearly distributive
  category}~\cite{cs:wkdistrib}, while if it also has duals then it is
a \textbf{$\ast$-autonomous category}~\cite{barr:staraut}.  Strong
homs can be defined in terms of duals and cotensors, if both exist, as
$(A\ihom B) = (A\d \cotens B)$; while duals can be defined in terms of
strong homs and a counit as $A\d = (A \ihom \bot)$.  Finally, if duals
exist, then tensors and cotensors are interdefinable by
$(A\cotens B) = (A\d \tens B\d)\d$ and dually.

Our basic structure will be a multicategory equipped with a presheaf
of polycategories.  It may not be immediately clear what should be
meant by a \emph{presheaf} on a multicategory; the following
definition is obtained by abstracting the structure of the
``representable presheaf'' $\cC(-;A)$ for an object $A$.

\begin{defn}
  Let $\cC$ be a symmetric multicategory and $\cA$ a category.
  An \textbf{\cA-valued presheaf on \cC} consists of:
  \begin{enumerate}
  \item An object $\cM(\Gamma)\in\cA$ for each finite list $\Gamma$ of objects of \cC.
  \item For all isomorphisms $\rho : \Gamma \toiso \Gamma'$, an action $\cM(\Gamma)\to \cM(\Gamma')$, written $x\mapsto x\rho$, that is functorial on composition of permutations.
  \item For each morphism $f\in\cC(\Gamma; A)$ and finite lists of objects $\Delta_1,\Delta_2$, an action morphism
    \[ f^*: \cM(\Delta_1,A,\Delta_2) \to \cM(\Delta_1,\Gamma,\Delta_2) \]
  \item Any $1_A^*$ is the identity, and the following diagrams commute for any morphisms $f\in \cC(\Gamma;A)$, $g\in \cC(\Lambda;B)$, and $h\in \cC(\Sigma_1,A,\Sigma_2;C)$.
    \[
      \begin{tikzcd}
        \cM(\Delta_1,A,\Delta_2,B,\Delta_3) \ar[r,"{f^*}"] \ar[d,"{g^*}"'] &
        \cM(\Delta_1,\Gamma,\Delta_2,B,\Delta_3) \ar[d,"{g^*}"]\\
        \cM(\Delta_1,A,\Delta_2,\Lambda,\Delta_3) \ar[r,"{f^*}"'] &
        \cM(\Delta_1,\Gamma,\Delta_2,\Lambda,\Delta_3)
      \end{tikzcd}
    \]
    \[
      \begin{tikzcd}
        \cM(\Delta_1,C,\Delta_2) \ar[r,"{h^*}"] \ar[dr,"{(h\circ_A f)^*}"'] &
        \cM(\Delta_1,\Sigma_1,A,\Sigma_2,\Delta_2) \ar[d,"{f^*}"]\\
        & \cM(\Delta_1,\Sigma_1,\Gamma,\Sigma_2,\Delta_2)
      \end{tikzcd}
    \]
  \item The following diagrams commute for any morphism $f\in \cC(\Gamma;A)$ and permutations $\rho:\Gamma\toiso\Gamma'$, and $\tau : (\Delta_1,A,\Delta_2) \toiso (\Delta_1',A,\Delta_2')$ where $\tau$ sends one of the notated copies of $A$ to the other one, and $\tau'$ treats $\Gamma$ as a block replacing $A$ in $\tau$:
    \[
      \begin{tikzcd}
        \cM(\Delta_1,A,\Delta_2) \ar[r,"{f^*}"] \ar[dr,"{(f\rho)^*}"'] &
        \cM(\Delta_1,\Gamma,\Delta_2) \ar[d,"\rho"] \\
        & \cM(\Delta_1,\Gamma',\Delta_2)
      \end{tikzcd}
      \quad
      \begin{tikzcd}
        \cM(\Delta_1,A,\Delta_2) \ar[r,"\tau"] \ar[d,"{f^*}"'] &
        \cM(\Delta_1',A,\Delta_2') \ar[d,"{f^*}"] \\
        \cM(\Delta_1,\Gamma,\Delta_2) \ar[r,"\tau'"'] &
        \cM(\Delta_1',\Gamma,\Delta_2')
      \end{tikzcd}
    \]
  \end{enumerate}
\end{defn}

\begin{egs}\ 
  \begin{enumerate}
  \item As suggested above, any $A\in \cC$ gives rise to a \textbf{representable} $\Set$-valued presheaf defined by $\yon_A(\Gamma) = \cC(\Gamma;A)$.
  \item For any presheaf \cM and any finite list of objects $\Delta$, there is a \textbf{shifted} presheaf $\cM[\Delta]$ defined by $\cM[\Delta](\Gamma) = \cM(\Gamma,\Delta)$.
  \end{enumerate}
\end{egs}

\begin{rmk}\label{thm:pshf-varieties}
  Presheaves on multicategories can be reformulated in several ways:
  \begin{enumerate}
  \item When a multicategory \cC is regarded as a co-unary polycategory, a $\Set$-valued presheaf on \cC is equivalently a \emph{module} over \cC in the sense of~\cite{hyland:pfthy-abs} whose nonempty values are all of the form $\cM(\Gamma;)$.
  \item A multicategory \cC equipped with a $\Set$-valued presheaf \cM can equivalently be considered as a polycategory that is \emph{co-subunary}, i.e.\ where morphisms have codomain arity 0 or 1: we define $\cC(\Gamma;) = \cM(\Gamma)$.
    The presheaf \cM is representable if and only if this polycategory has a ``counit in the co-subunary sense'' $\cC(\Gamma;\bot) \cong \cC(\Gamma;\,)$.\label{item:cosubunary}
  \item An \cA-valued presheaf on a multicategory \cC can equivalently be defined as an ordinary functor $(\bF_{\ten}\cC)\op\to \cA$, where $\bF_{\ten}\cC$ is the free symmetric strict monoidal category generated by \cC, whose objects are finite lists of objects of \cC.\label{item:pshf3}
  \end{enumerate}
  Formulation~\ref{item:pshf3} implies that the category $\psh(\cC)$
  of $\Set$-valued presheaves on \cC admits a Day-convolution monoidal
  structure~\cite{day:closed}.  By the monoidal Yoneda lemma, it
  follows that morphisms $(\yon_{A_1},\dots,\yon_{A_n})\to \cM$ in the
  underlying multicategory of $\psh(\cC)$ are in natural bijection
  with elements of $\cM(A_1,\dots A_n)$.  Accordingly, we will
  sometimes abuse notation by writing $x:(A_1,\dots ,A_n) \to \cM$
  instead of $x \in \cM(A_1,\dots,A_n)$, with the presheaf action
  similarly denoted by composition, $f^* x = x\circ f$.
\end{rmk}

Logically, a presheaf on a multicategory represents a ``logic over a
type theory'': we have terms\footnote{For an ordinary symmetric
  multicategory as in our case, the base type theory in question is an
  intuitionistic linear one; by instead using a \emph{cartesian}
  multicategory with an appropriate notion of presheaf we would model
  an intuitionistic nonlinear type theory.} $\Gamma\types t:A$ for the
morphisms of \cC, together with an additional judgment form
``$\Gamma\types \phi \isprop$'' for the elements of the presheaf, with
a substitution action by the terms.  The expected structure of
\emph{entailment} between such propositions can be modeled by choosing
an appropriate target category \cA other than $\Set$, depending on the
desired kind of logic.

In our case, we want the logic to be classical linear logic, so we
consider multicategories \cC equipped with a \emph{presheaf of
  polycategories}, which we generally denote $\Omega$.  Note that
$\Omega$ is equivalently an internal polycategory object in
$\psh(\cC)$.  Logically, the objects of \cC correspond to
\emph{types}, the morphisms correspond to \emph{terms}
\[ x_1:A_1, \dots, x_n:A_n \types t:B, \]
the elements of $\Omega(\Gamma)$ correspond to \emph{predicates}
\[ x_1:A_1, \dots, x_n:A_n \types \phi \isprop, \]
and the morphisms in $\Omega(\Gamma)$ correspond to \emph{sequents} or \emph{entailments} in context:
\[ x_1:A_1, \dots, x_n:A_n \mid \phi_1,\dots,\phi_m \types \psi_1,\dots,\psi_k. \]
Note that each $\phi_i$ and $\psi_j$ depends \emph{separately} linearly on the context: each variable $x_k$ is ``used exactly once'' in \emph{each} $\phi_i$ and $\psi_j$.

Following this intuition, we refer to such a pair $(\cC,\Omega)$ as a
\textbf{virtual linear hyperdoctrine}.  In general a
hyperdoctrine~\cite{lawvere:adjointness,lawvere:comprehension} is an
indexed category whose base category represents the types and terms in
a type theory and whose fibers represent the predicates and sequents
in a first-order logic over that type theory.  The word ``virtual'' is
used by analogy to~\cite{cs:multicats} and indicates that nothing
corresponding to the type constructors or logical connectives or
quantifiers is present; we have only the structural rules.  (The lack
of even finite products of types is what forces us to allow predicates
to depend on finite lists of types rather than single ones.)  Note
that we do not assume our polycategories $\Omega(\Gamma)$ to be
poly-posets; as for Lawvere, the fibers of our hyperdoctrine can
distinguish between different ``proofs'' with the same domain and
codomain.

\section{Dimension $(1,1)$: the 2-Chu-Dialectica construction}
\label{sec:2cd}

Let $\Omega$ be a presheaf of polycategories on a multicategory \cC;
we will describe another presheaf of polycategories $\adjcom$ on \cC.
For a finite list of objects $\Gamma$, an element of $\adjcom(\Gamma)$
is a triple $(\phi\p, \phi\m, \uphi)$, where $\phi\p,\phi\m$ are
objects of \cC and $\uphi \in \Omega(\Gamma,\phi\m,\phi\p)$.  The
presheaf action is induced in the obvious way from that of \Omega.

A morphism $(\phi_1,\dots,\phi_m)\to (\psi_1,\dots,\psi_n)$ in $\adjcom(\Gamma)$ consists of:
\begin{enumerate}
\item Morphisms in \cC:
  \begin{align*}
    f_j : (\phi_1\p, \dots, \phi_m\p, \psi_1\m, \dots, \widehat{\psi_j\m},\dots \psi_n\m) &\longrightarrow \psi_j\p\\
    g_i : (\phi_1\p , \dots, \widehat{\phi_i\p},\dots \phi_m\p , \psi_1\m, \dots, \psi_n\m) &\longrightarrow \phi_i\m
  \end{align*}
  for $1\le j\le n$ and $1\le i\le m$ (where $\widehat{\chi}$ means that $\chi$ is omitted from the list).
  We call these the \emph{primary components}.
\item A morphism in $\Omega(\Gamma,\phi_1\p,\dots,\phi_m\p,\psi_1\m,\dots,\psi_n\m)$:
%  \[ \al : ((\uphi_1\circ_{\phi_1\m} g_1),\dots,(\uphi_m\circ_{\phi_m\m} g_m)) \to ((\upsi_1 \circ_{\psi_1\p} f_1),\dots, (\upsi_n \circ_{\psi_n\p} f_n)) \]
  \[ \al : (g_1^* \uphi_1,\dots,g_m^* \uphi_m) \to (f_1^*\upsi_1,\dots, f_n^* \upsi_n) \]
  We call this the \emph{secondary component}.
\end{enumerate}
We have omitted to notate the action of symmetric groups needed to
make all the above composites live in the right place.  We will
continue to do the same below; in all cases there is only one possible
permutation that could be meant.  Note that $\Gamma$ appears in the
domain of $\al$, but \emph{not} in the domains of $f_j$ and $g_i$.
Also, if $n=m=0$, the only datum is the $(0,0)$-ary morphism $\al$ in
$\Omega(\Gamma)$.

The symmetric and presheaf actions on morphisms in $\adjcom(\Gamma)$
is obvious.  To define composition in the polycategory
$\adjcom(\Gamma)$, suppose we have another such morphism
$(\xi_1,\dots,\xi_p) \to (\ze_1,\dots,\ze_q)$, where
$\psi_{j_0} = \xi_{k_0}$, with primary components
\begin{align*}
  r_l : (\xi_1\p, \dots, \xi_p\p, \ze_1\m, \dots, \widehat{\ze_l\m},\dots \ze_q\m) &\longrightarrow \ze_l\p\\
  s_k : (\xi_1\p , \dots, \widehat{\xi_k\p},\dots \xi_p\p , \ze_1\m, \dots, \ze_q\m) &\longrightarrow \xi_k\m
\end{align*}
for $1\le k\le p$ and $1\le l\le q$, and secondary component
% \[ \be : ((\uxi_1\circ_{\xi_1\m} s_1),\dots,(\uxi_p\circ_{\xi_p\m} s_p)) \to ((\uze_1 \circ_{\ze_1\p} r_1),\dots, (\uze_q \circ_{\ze_q\p} r_q)), \]
\[ \be : (s_1^* \uxi_1,\dots,s_p^* \uxi_p) \to (r_1^* \uze_1,\dots, r_q ^* \uze_q). \]
For conciseness we write
% \begin{align*}
  \(\vec\phi\p = (\phi_1\p,\dots,\phi_m\p)\) and
  \(\vec\phi_{\neq j}\p = (\phi_1\p,\dots,\widehat{\phi_j\p},\dots\phi_m\p) \).
%\end{align*}
%and so on.

  The desired composite should be (up to symmetric action) a morphism
  $(\vec\phi,\vec\xi_{\neq k_0}) \to (\vec \psi_{\neq j_0},\vec\ze)$.
  We take its primary components to be (up to symmetric action)
\begin{align*}
  f_j \circ_{\psi_{j_0}\m = \xi_{k_0}\m} s_{k_0} : (\vec\phi\p, \vec\xi_{\neq k_0}\p, \vec\psi_{\neq j,j_0}\m ,\vec \ze\m) &\longrightarrow \psi_j\p \qquad (j\neq j_0)\\
  r_l \circ_{\xi_{k_0}\p = \psi_{j_0}\p} f_{j_0} : (\vec\phi\p, \vec\xi_{\neq k_0}\p, \vec\psi_{\neq j_0}\m ,\vec \ze_{\neq l}\m) &\longrightarrow \ze_l\p\\
  g_i \circ_{\psi_{j_0}\m = \xi_{k_0}\m} s_{k_0} : (\vec\phi_{\neq i}\p, \vec\xi_{\neq k_0}\p, \vec\psi_{\neq j_0}\m ,\vec \ze\m) &\longrightarrow \phi_i\m\\
  s_k \circ_{\xi_{k_0}\p = \psi_{j_0}\p} f_{j_0} : (\vec\phi\p, \vec\xi_{\neq k,k_0}\p, \vec\psi_{\neq j_0}\m ,\vec \ze\m) &\longrightarrow \xi_k\m \qquad (k\neq k_0).
\end{align*}
For the secondary component, first note that we have
\begin{alignat*}{2}
  s_{k_0}^* \al &: (s_{k_0}^* g_1^* \uphi_1,\dots, s_{k_0}^* g_m^* \uphi_m ) &&\to (s_{k_0}^* f_1^* \upsi_1,\dots, s_{k_0}^*  f_n^* \upsi_n)\\
  f_{j_0}^* \be &: (f_{j_0}^* s_1^* \uxi_1,\dots, f_{j_0}^* s_p^*\uxi_p) &&\to (f_{j_0}^* r_1^* \uze_1,\dots, f_{j_0}^*  r_q^* \uze_q )
  % \al \circ_{\psi_{j_0}\m = \xi_{k_0}\m} s_{k_0} &: (\uphi_1 g_1 s_{k_0},\dots, \uphi_m g_m s_{k_0}) \to (\upsi_1 f_1 s_{k_0},\dots,\upsi_n f_n s_{k_0})\\
  % \be \circ_{\xi_{k_0}\p = \psi_{j_0}\p} f_{j_0} &: (\uxi_1 s_1 f_{j_0},\dots,\uxi_p s_p f_{j_0}) \to (\uze_1 r_1 f_{j_0},\dots, \uze_q r_q f_{j_0})
\end{alignat*}
Now since $\upsi_{j_0} = \uxi_{k_0}$, by associativity of the presheaf
action we have
$s_{k_0}^* f_{j_0}^* \upsi_{j_0} = f_{j_0}^* s_{k_0}^* \uxi_{k_0} $.
Thus, we can compose these two morphisms along this common object to
get a morphism
\begin{multline*}
(s_{k_0}^* g_1^* \uphi_1,\dots, s_{k_0}^* g_m^* \uphi_m,  f_{j_0} ^* s_1 ^*  \uxi_1,\dots,\widehat{  f_{j_0}^* s_{k_0}^* \uxi_{k_0}},\dots,  f_{j_0}^* s_p^* \uxi_p)\\ \too
(  s_{k_0}^* f_1^* \upsi_1,\dots,\widehat{  s_{k_0}^* f_{j_0}^* \upsi_{j_0}},\dots,  s_{k_0}^* f_n^* \upsi_n,  f_{j_0}^* r_1^* \uze_1,\dots,   f_{j_0}^* r_q^* \uze_q)
\end{multline*}
This is the secondary component of our desired composite in
$\adjcom(\Gamma)$.  The associativity, equivariance, and so on of this
operation follow from the analogous properties in \Omega.

\section{Dimension $(0,1)$: the Dialectica construction}
\label{sec:rep}

Dialectica and Chu constructions generally yield a \emph{monoidal}
category (perhaps linearly distributive, closed, or $\ast$-autonomous)
or a fibration of such.  Our construction produces a fully virtual
(multi/poly-categorical) structure, so to compare it to the usual
constructions we need to consider its representability conditions,
which are induced from similar conditions on both \cC and \Omega.

\begin{defn}\label{defn:2-properties}
  A presheaf of polycategories $\Omega$ on \cC has \textbf{tensors}, a
  \textbf{unit}, \textbf{cotensors}, a \textbf{counit},
  \textbf{duals}, or \textbf{homs} if the polycategories
  $\Omega(\Gamma)$ have the relevant structure and it is preserved by
  the presheaf action (up to isomorphism).
\end{defn}

\begin{lem}
  Let \Omega be a presheaf of polycategories on a multicategory \cC.
  \begin{enumerate}
  \item If $\Omega$ is co-unary with tensors and a unit, it is equivalently a presheaf of symmetric monoidal categories.
  \item If $\Omega$ is co-unary with tensors, a unit, and homs, it is equivalently a presheaf of closed symmetric monoidal categories.
  \item If $\Omega$ has tensors, a unit, cotensors, and a counit, it is equivalently a presheaf of linearly distributive categories.
  \item If $\Omega$ has tensors, a unit, cotensors, a counit, and weak homs, it is equivalently a presheaf of ``full multiplicative categories''~\cite{cs:pfth-bill}: linearly distributive categories whose tensor (but not cotensor) monoidal structure is closed.
  \item If $\Omega$ has tensors, a unit, cotensors, a counit, and duals, it is equivalently a presheaf of $\ast$-autonomous categories.\qed
  \end{enumerate}
\end{lem}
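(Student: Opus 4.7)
My plan is to deduce each clause from a fiberwise equivalence combined with a check on the presheaf action. At each fixed $\Gamma$, the hypotheses say that the polycategory $\Omega(\Gamma)$ has the relevant representability data, so I would invoke the standard fiberwise translations: a symmetric multicategory (i.e.\ co-unary polycategory) with tensors and a unit is a symmetric monoidal category, and adding strong homs makes it closed; a symmetric polycategory with tensors, unit, cotensors, and counit is a linearly distributive category by the main theorem of~\cite{cs:wkdistrib}; adding weak homs yields the full multiplicative categories of~\cite{cs:pfth-bill}, while adding duals yields a $\ast$-autonomous category in the sense of~\cite{barr:staraut}. Each of these translations is an equivalence of categories (indeed of 2-categories) between the representable polycategories and the corresponding named structured categories, so the object-level part of each clause is immediate.

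Next I would promote this to a statement about the presheaf itself. For any $f \in \cC(\Gamma; A)$ the action $f^* : \Omega(\Delta_1, A, \Delta_2) \to \Omega(\Delta_1,\Gamma,\Delta_2)$ is, by virtue of the presheaf-of-polycategories axioms, a polycategory functor, and the permutation actions are polycategorical automorphisms. The hypothesis in \cref{defn:2-properties} that the representability data is preserved up to isomorphism then translates, under the fiberwise equivalence, into the statement that each $f^*$ is a strong morphism of the appropriate structured category (strong monoidal, strong closed monoidal, strong Frobenius-linear, etc.). Functoriality in $f$ and the compatibility with permutations are inherited from the presheaf axioms for $\Omega$, so $\Omega$ is indeed a presheaf valued in the stated 2-category. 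The converse direction is essentially tautological: a presheaf of structured categories has an underlying presheaf of polycategories, by taking the canonical polycategory associated to each structured category fiberwise, and this reconstruction is inverse to the forward translation.

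The only substantive point, and what I would view as the main obstacle, is verifying that each fiberwise translation is 2-functorial in a way compatible with what it means for $f^*$ to ``preserve tensors, cotensors, units, counits, duals up to isomorphism.'' For each piece of data this reduces to the universal property of the representing object: e.g.\ for tensors, the polyarrow $(A,B) \to A\tens B$ maps under $f^*$ to a polyarrow whose corresponding one-ary factorization through $f^*(A\tens B)$ is the canonical comparison $f^*(A) \tens f^*(B) \to f^*(A\tens B)$, and ``preserved up to isomorphism'' is precisely the invertibility of this comparison. The same argument, dualized or restricted to the co-unary case, handles all the other structures. With this observation in hand, each clause follows by assembling the fiberwise equivalence with the appropriate 2-category of structured categories and morphisms.
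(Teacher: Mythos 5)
Your proposal is correct and matches the paper's intended reasoning: the paper omits the proof entirely (the lemma is stated with a \qed), relying on exactly the fiberwise translations it records after \cref{defn:tens} (representable polycategories are linearly distributive categories, plus duals give $\ast$-autonomous categories, etc.) together with \cref{defn:2-properties}, under which ``preserved up to isomorphism'' unwinds to the strong-morphism condition you describe. Your elaboration of why the comparison maps arising from the universal properties make the $f^*$ strong morphisms is precisely the routine verification the paper leaves to the reader.
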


We keep the notations $\tens,\top,\cotens,\bot,\ihom$ for such
structures on $\Omega$, but to avoid confusion we will instead denote
tensor products, units, and internal-homs in the multicategory \cC by
$A\otimes B$, $I$, and $[A,B]$ respectively.  We also require the
following assumption:

\begin{defn}
  A tensor product $(A,B) \to A\otimes B$ in a multicategory \cC is
  \textbf{preserved} by an \cA-valued presheaf \Omega if the induced
  maps \( \Omega(\Gamma,A\otimes B) \toiso \Omega(\Gamma,A,B) \) are
  all isomorphisms.  Similarly, $\Omega$ preserves a a unit
  $() \to (I)$ if it induces isomorphisms
  $\Omega(\Gamma,I) \toiso \Omega(\Gamma)$.
\end{defn}

\begin{lem}
  If \cC has all tensor products and a unit, then an \cA-valued
  presheaf $\Omega$ that preserves all such tensors and the unit is
  equivalently an ordinary presheaf on the underlying ordinary
  category of \cC.\qed
\end{lem}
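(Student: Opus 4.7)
The claim essentially says that a presheaf preserving tensors/unit is uniquely determined by its restriction to unary lists and unary morphisms. I would prove it by constructing functors back and forth and checking they are mutually inverse (up to natural isomorphism).

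Choose for each finite list $\Gamma = (A_1,\dots,A_n)$ of objects of $\cC$ a ``total tensor'' $\bigotimes\Gamma$, defined inductively from the binary tensor products and the unit: $\bigotimes() = I$, $\bigotimes(A) = A$, and $\bigotimes(A_1,\dots,A_n) = (\bigotimes(A_1,\dots,A_{n-1}))\otimes A_n$. By iterating the universal property of tensor products in $\cC$, there is a canonical multimorphism $\iota_\Gamma : \Gamma \to \bigotimes\Gamma$ exhibiting $\bigotimes\Gamma$ as a tensor of the whole list, in the sense that precomposition with $\iota_\Gamma$ gives a bijection between unary morphisms $\bigotimes\Gamma \to B$ in $\cC_1$ and multimorphisms $\Gamma\to B$ in $\cC$. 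This makes $\cC_1$ into a (strict or up-to-coherent-iso) monoidal category, with $\cC$ equivalent to the multicategory of $n$-ary maps in $\cC_1$.

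Define the restriction functor $R$ sending an $\cA$-valued presheaf $\Omega$ on $\cC$ (preserving tensors and the unit) to the ordinary presheaf $R\Omega : \cC_1\op \to \cA$ given by $(R\Omega)(A) = \Omega(A)$ on objects, with unary morphisms acting via the multimorphism action of $\Omega$. In the other direction, define $E$ sending an ordinary presheaf $M : \cC_1\op\to\cA$ to the presheaf $EM$ on $\cC$ defined by $(EM)(\Gamma) = M(\bigotimes\Gamma)$. The permutation action on $(EM)(\Gamma)$ is induced by the symmetry isomorphisms on $\bigotimes\Gamma$ in $\cC_1$; and for a multimorphism $f:\Lambda\to A$ in $\cC$, the action $f^* : (EM)(\Delta_1,A,\Delta_2)\to (EM)(\Delta_1,\Lambda,\Delta_2)$ is defined as $M$ applied to the unique unary morphism $\bigotimes(\Delta_1,\Lambda,\Delta_2)\to \bigotimes(\Delta_1,A,\Delta_2)$ corresponding to $f$ under the universal property (combined with the canonical iso $\bigotimes(\Delta_1,\Lambda,\Delta_2)\cong (\bigotimes\Delta_1)\otimes(\bigotimes\Lambda)\otimes(\bigotimes\Delta_2)$, and similarly for $A$). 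The functoriality and presheaf axioms follow from the universal properties and coherence of iterated tensor products.

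Next I would verify that $EM$ does in fact preserve tensors and the unit: the structure map $\Gamma \otimes \Gamma' \to \bigotimes(\Gamma,\Gamma')$ is an isomorphism in $\cC_1$ by construction, so $(EM)(\Gamma,A\otimes B,\Gamma')\toiso (EM)(\Gamma,A,B,\Gamma')$ and $(EM)(\Gamma,I,\Gamma')\toiso (EM)(\Gamma,\Gamma')$. Then $RE \cong \mathrm{id}$ is immediate, since $\bigotimes(A) = A$. For $ER\cong \mathrm{id}$, given a tensor-and-unit-preserving $\Omega$, the hypothesis gives iterated isomorphisms $\Omega(\Gamma)\toiso \Omega(\bigotimes\Gamma) = (ER\Omega)(\Gamma)$, induced by the canonical multimorphism $\iota_\Gamma$; naturality in $\Gamma$ with respect to both permutations and multimorphism actions is exactly what the preservation axiom guarantees.

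The main obstacle is the bookkeeping around coherence: because the tensor product of a multicategory is associative, unital and symmetric only up to canonical isomorphism, one must check that all choices of bracketing and ordering fit together compatibly, so that both $EM$ and the natural isomorphisms $\Omega\cong ER\Omega$ are well-defined and strictly functorial in the presheaf structure. Fortunately this is entirely routine, since everything in sight is characterized by universal properties; one can either invoke a strictification of the monoidal structure on $\cC_1$, or observe that any two iterated tensor constructions are related by a unique coherent isomorphism which pulls back to itself under $M$.
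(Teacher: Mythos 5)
The paper offers no proof of this lemma at all---it is stated with an immediate \qed---so there is no ``paper's route'' to compare against; your argument is the standard restriction/extension equivalence that the author evidently regards as routine, and it is correct. The two functors, the verification that the extension preserves tensors and the unit, and the two natural isomorphisms are all as they should be; the key points you rightly rely on are that the binary tensors in $\cC$ satisfy the \emph{strong} universal property $\cC(\Gamma,A\otimes B;C)\cong\cC(\Gamma,A,B;C)$ (so that iterated tensors represent whole lists in arbitrary contexts, which is what lets you transport a multimorphism action to a unary morphism between total tensors), and that everything is determined up to unique coherent isomorphism by universal properties. One small imprecision: the naturality of the comparison $\Omega(\Gamma)\toiso\Omega(\bigotimes\Gamma)$ in $\Gamma$ is supplied by the presheaf compatibility axioms (the commuting squares relating the actions of different morphisms and of permutations), while the preservation hypothesis is what makes these comparison maps invertible; you attribute both to ``the preservation axiom,'' but this does not affect the correctness of the argument.
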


\begin{eg}\label{eg:rep-pshf}
  Just as an internal category in \cC induces a representable presheaf
  of categories on \cC, an internal polycategory induces a presheaf of
  polycategories, and similarly for any other such structure.  Such
  presheaves always preserve all tensor products and units in \cC.
\end{eg}

\begin{thm}\label{thm:2tens}
  Let $\cC$ be a multicategory and \Omega a presheaf of polycategories on \cC.
  \begin{enumerate}
  \item $\adjcom$ always has duals.
  \item If $\cC$ has a unit and a terminal object, and $\Omega$ preserves the unit of \cC and has a unit (resp.\ a counit), then $\adjcom$ has a unit (resp.\ a counit).
  \item If \cC has tensor products, homs, and binary cartesian products, and $\Omega$ preserves the tensor products of \cC and has tensor products, cotensor products, or strong or weak homs, then $\adjcom$ also has tensor products, cotensor products, or strong or weak homs respectively.
  \end{enumerate}
\end{thm}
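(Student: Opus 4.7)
The plan is to treat the four pieces---duals, units/counits, tensors/cotensors, and homs---separately, giving an explicit construction in each case and then verifying the universal property by tracing it through the primary (in $\cC$) and secondary (in $\Omega$) components of a morphism in $\adjcom(\Gamma)$.

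For duals, I set $\phi\d := (\phi\m, \phi\p, \uphi\cdot\sigma)$, where $\sigma$ is the transposition interchanging the last two arguments in $\Omega(\Gamma, \phi\m, \phi\p)$. The unit $\eta:()\to(\phi,\phi\d)$ and counit $\ep:(\phi\d,\phi)\to()$ have primary components given by identities $1_{\phi\p}$ and $1_{\phi\m}$ in $\cC$, and secondary component given by the identity of $\uphi$ in $\Omega$. The triangle identities then reduce directly to the identity axioms of $\cC$ and $\Omega$, with the symmetric action $\sigma$ canceling in each composite.

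For the unit, I take $\top_{\adjcom(\Gamma)} := (T, I, \ul{\top})$, where $T$ is terminal in $\cC$, $I$ is the unit of $\cC$, and $\ul{\top}$ is the unit of $\Omega(\Gamma, I, T)$. Unpacking a morphism out of a list containing $\top_{\adjcom(\Gamma)}$: the primary component into $T$ is unique by terminality; the redundant copies of $I$ appearing in the other primary domains cancel by the unit property of $I$, together with $\Omega$'s preservation of this unit (needed so that the secondary component transfers cleanly); and the extra $\ul{\top}$ in the codomain of the secondary component cancels by the unit property in $\Omega$. The counit $\bot_{\adjcom(\Gamma)} := (I, T, \ul{\bot})$ is dual.

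For tensors, define $\phi \tens \psi := (\phi\p \otimes \psi\p,\; [\phi\p,\psi\m]\times[\psi\p,\phi\m],\; \ul{\phi\tens\psi})$, where $\ul{\phi\tens\psi}$ is the tensor in $\Omega$ of the pullbacks of $\uphi$ and $\upsi$ along the evaluation-and-projection composites $((\phi\tens\psi)\m,\psi\p)\to\phi\m$ and $((\phi\tens\psi)\m,\phi\p)\to\psi\m$ in $\cC$. The canonical morphism $(\phi,\psi)\to(\phi\tens\psi)$ has primary components built from product projections and evaluations, and identity secondary component. To verify representability, I unpack a morphism $(\Phi,\phi\tens\psi,\Phi')\to\Psi$ in $\adjcom(\Gamma)$: the primary components with $(\phi\tens\psi)\p = \phi\p\otimes\psi\p$ in the domain transpose by the tensor universal property in $\cC$; those targeting $(\phi\tens\psi)\m$ split into pairs by the internal-hom and binary-product universal properties in $\cC$; and the secondary component, after pulling back $\ul{\phi\tens\psi}$ along the now-decomposed primary data, factors by functoriality of the presheaf action and $\Omega$'s preservation of $\cC$-tensors as the $\Omega$-tensor of $g_1^*\uphi$ and $g_2^*\upsi$, which finally transposes by $\Omega$'s tensor universal property. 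Composing these bijections yields the required isomorphism with morphisms $(\Phi,\phi,\psi,\Phi')\to\Psi$.

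Cotensors are constructed dually, swapping positive and negative parts and using cotensors in $\Omega$. Since duals always exist in $\adjcom$ by part (i), strong homs follow formally as $\phi\ihom\psi := \phi\d\cotens\psi$ whenever cotensors in $\Omega$ exist; the case of strong or weak homs in $\Omega$ alone admits a parallel direct construction using $(\phi\ihom\psi)\p := [\phi\p,\psi\p]\times[\psi\m,\phi\m]$, $(\phi\ihom\psi)\m := \phi\p\otimes\psi\m$, and the hom in $\Omega$ for the underline. The main obstacle throughout is the tensor case: specifically, bookkeeping the various symmetric-group actions that arise when composing primary components and pulling back in $\Omega$, and rigorously checking that $g^*\ul{\phi\tens\psi}$ really does decompose as an $\Omega$-tensor. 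The latter is where $\Omega$'s preservation of $\cC$-tensors is essential, since it lets one freely identify $\Omega(\ldots,\phi\p\otimes\psi\p,\ldots)$ with $\Omega(\ldots,\phi\p,\psi\p,\ldots)$ when matching the two universal properties.
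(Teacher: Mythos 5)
Your constructions for duals, tensors, cotensors, and homs coincide with the paper's: in particular you correctly take $(\phi\tens\psi)\m$ to be the plain product $[\phi\p,\psi\m]\times[\psi\p,\phi\m]$ (no pullback), define $\ul{\phi\tens\psi}$ as the $\Omega$-tensor of the two pulled-back underlines transported across $\Omega$'s preservation of $\phi\p\otimes\psi\p$, and verify representability by the same three-step unpacking (transpose across $\otimes$, split across $[-,-]$ and $\times$, then transpose across the $\Omega$-tensor) that the paper uses.

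However, your unit and counit have their two components swapped, and with your assignment the universal property fails. The paper takes $\top\p = I$ and $\top\m = 1$ (the terminal object); you take $\top\p = T$ and $\top\m = I$. When $\top$ is adjoined to the domain of a polyarrow in $\adjcom(\Gamma)$, it is $\top\p$ that gets inserted into the domains of all the \emph{other} primary components $f_j$, $g_i$ --- so it must be $I$, cancellable by the unit property of $\cC$ and its preservation by $\Omega$ --- while the one \emph{new} primary component is a morphism \emph{into} $\top\m$ --- so it must be terminal to make that component unique. With $\top\p = T$ the extra copies of $T$ in the domains cannot be removed, and with $\top\m = I$ the new component is not unique. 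Your own verification sentence (``the primary component into $T$ is unique by terminality; the redundant copies of $I$ \dots cancel'') describes the correct situation, so the triple should read $(I, T, \ul{\top})$, and dually $(T, I, \ul{\bot})$ for the counit.

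Two smaller typechecking slips: the secondary components of $\eta:()\to(\phi,\phi\d)$ and $\ep:(\phi\d,\phi)\to()$ cannot be ``the identity of $\uphi$'' --- by the definition of morphisms in $\adjcom(\Gamma)$ they must be morphisms $()\to(\uphi,\uphi)$ and $(\uphi,\uphi)\to()$ in $\Omega(\Gamma,\phi\m,\phi\p)$, i.e.\ $(0,2)$- and $(2,0)$-ary, not $(1,1)$-ary (the paper's proof of (i) only exhibits the dual object and does not spell these out). Similarly the secondary component of the universal morphism $(\phi,\psi)\to(\phi\tens\psi)$ is not an identity but the universal $(2,1)$-ary morphism exhibiting the tensor product in $\Omega$.
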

\begin{proof}
  The dual of $(\phi\p,\phi\m,\uphi)$ is
  \begin{equation*}
    (\dual\phi)\p = \phi\m\qquad
    (\dual\phi)\m = \phi\p\qquad
    (\Gamma,\phi\p,\phi\m) \toiso (\Gamma,\phi\m,\phi\p) \xto{\uphi} \Omega.
  \end{equation*}
  The unit is defined by $\top\p = I$ and $\top\m = 1$ (the terminal
  object), with $\ul{\top} = \top$ in $\Omega(\Gamma,I,1)$, while the
  counit similarly has $\bot\p=1$ and $\bot\m=I$.

  The tensor product of $(\phi\p,\phi\m,\uphi)$ and $(\psi\p,\psi\m,\upsi)$ is
  \begin{align*}
    (\phi\tens\psi)\p &= (\phi\p \otimes \psi\p)\\
    (\phi\tens\psi)\m &= [\phi\p, \psi\m] \times [\psi\p, \phi\m]
  \end{align*}
  with
  $\ul{\phi\tens\psi} \in \Omega(\Gamma,[\phi\p,\psi\m] \times
  [\psi\p,\phi\m], \phi\p\otimes \psi\p)$ induced by the universal
  property of $\phi\p\otimes \psi\p$ from the following tensor product
  in \Omega
  \begin{equation}
  \begin{array}[c]{c}\scriptstyle
    \big( (\Gamma,[\phi\p,\psi\m] \times [\psi\p,\phi\m], \phi\p,\psi\p)
    \to (\Gamma,[\phi\p,\psi\m], \phi\p,\psi\p)
    \to (\Gamma,\psi\m,\psi\p)
    \xto{\upsi} \Omega\big)\\
    \raisebox{-2pt}{$\scriptstyle\tens$}\\
    \scriptstyle\big( (\Gamma,[\phi\p,\psi\m] \times [\psi\p,\phi\m], \phi\p,\psi\p)
    \to (\Gamma,[\psi\p,\phi\m], \phi\p,\psi\p)
    \to (\Gamma,\phi\m,\phi\p)
    \xto{\uphi} \Omega\big)
  \end{array}\label{eq:2tens}
  \end{equation}
  The universal morphism $(\phi,\psi) \to(\phi\tens\psi)$ has primary components
  \begin{align*}
    (\phi\p, \psi\p) &\to (\phi\p \otimes \psi\p)\\
    ([\phi\p, \psi\m] \times [\psi\p, \phi\m], \phi\p) &\to ([\phi\p, \psi\m],\phi\p) \to \psi\m\\
    ([\phi\p, \psi\m] \times [\psi\p, \phi\m], \psi\p) &\to ([\psi\p, \phi\m],\psi\p) \to \phi\m
  \end{align*}
  and its secondary component exhibits the universal property of the tensor product~\eqref{eq:2tens}.

  We check the universal property of $\phi\tens\psi$ in the case of a
  morphism $(\phi\tens\psi, \xi) \to (\ze)$ in $\adjcom(\Gamma)$; the
  general case is the same but the notation is more tedious.  Such a
  morphism has primary components
  \begin{align*}
    f &: (\phi\p\otimes\psi\p, \xi\p) \to \ze\p\\
    g &: (\phi\p\otimes\psi\p,\ze\m) \to\xi\m\\
    h &: (\xi\p,\ze\m) \to [\phi\p, \psi\m] \times [\psi\p, \phi\m]
  \end{align*}
  and a secondary component
  \begin{equation}
    (\ul{\phi\tens\psi} \circ h, \uxi \circ g) \to (\uze \circ f).\label{eq:2tens-2mor}
  \end{equation}
  Composing $f,g,h$ with the components of
  $(\phi,\psi) \to(\phi\tens\psi)$ exactly implements the universal
  properties of $\phi\p \otimes \psi\p$ and
  $[\phi\p, \psi\m] \times [\psi\p, \phi\m]$, yielding a bijective
  correspondence to quadruples of morphisms
  \begin{alignat*}{2}
    f' &: (\phi\p,\psi\p, \xi\p) \to \ze\p &\qquad
    h' &: (\phi\p,\xi\p,\ze\m) \to \psi\m\\
    g' &: (\phi\p,\psi\p,\ze\m) \to\xi\m &
    h'' &: (\psi\p,\xi\p,\ze\m) \to \phi\m
  \end{alignat*}
  which are exactly as required for a morphism
  $(\phi,\psi,\xi) \to (\ze)$.  Similarly,
  composing~\eqref{eq:2tens-2mor} with the secondary component of
  $\phi\tens\psi$ simply composes $\ul{\smash{\phi\tens\psi}}$ with
  the universal map $(\phi\p, \psi\p) \to (\phi\p \otimes \psi\p)$,
  exposing the tensor product~\eqref{eq:2tens}, and then composes with
  the morphism exhibiting the universal property of the latter.

  Dually, the cotensor product of $\phi$ and $\psi$ is
  \begin{align*}
    (\phi\cotens\psi)\p &= [\phi\m, \psi\p] \times [\psi\m, \phi\p]\\
    (\phi\cotens\psi)\m &= (\phi\m \otimes \psi\m)
  \end{align*}
  with $\ul{\phi\cotens\psi}$ defined similarly using $\cotens$ in
  \Omega instead of $\tens$, while the counit has $\bot\p=1$ and
  $\bot\m=I$, with $\ul{\bot}=\bot$.  And the hom $\phi\multimap\psi$
  (strong or weak according to that of $\Omega$) is
  \begin{align*}
    (\phi\multimap \psi)\p &= [\phi\p,\psi\p] \times [\psi\m,\phi\m]\\
    (\phi\multimap\psi)\m &= (\phi\p \otimes \psi\m)
  \end{align*}
  with $\ul{\phi\multimap\psi}$ defined using $\multimap$ in \Omega.
\end{proof}

\begin{eg}
  The original Dialectica construction focused on what in our notation
  is the ``empty context'' component $\adjcom\zz$.  For instance,
  applying \cref{thm:2tens} to \cref{eg:rep-pshf} we see that if
  $\Omega$ is an internal closed monoidal poset in a closed symmetric
  monoidal category \cC, then $\adjcom\zz$ is a closed symmetric
  monoidal category.  This reproduces the general Dialectica
  construction from~\cite{depaiva:multirel,paiva:dialectica-chu}.
\end{eg}

\begin{egs}
  The original construction from~\cite{depaiva:dialectica-like}
  (called $\mathbf{GC}$ in~\cite{depaiva:dialectica}) is the case when
  we have a \emph{cartesian} closed category $\cC$, with
  $\Omega =\sub(\cC)$ its subobject fibration where $\sub(\cC)(A)$ is
  the poset of subobjects of $A$, with additional structure induced
  from that of \cC:
  \begin{enumerate}
  \item As long as \cC has finite limits, $\sub(\cC)$ is a presheaf of meet-semilattices, hence in particular symmetric monoidal posets, so we can regard it as a presheaf of multicategories (i.e.\ co-unary polycategories) with tensors and units.
    Thus, $\adjc(\sub(\cC))\zz$ is a symmetric monoidal category.
  \item If $\cC$ is a Heyting category, then $\sub(\cC)$ is a presheaf of Heyting algebras, i.e.\ cartesian closed posets, so we can regard it as a presheaf of multicategories with tensors, units, and homs.
    Thus, in this case $\adjc(\sub(\cC))\zz$ is a closed symmetric monoidal category.
  \item If $\cC$ is a coherent category, then $\sub(\cC)$ is a presheaf of distributive lattices.
    Since a distributive lattice can be regarded as a linearly distributive category, we can also regard it as a polycategory that is \emph{not} co-unary, and has tensors, a unit, cotensors, and a counit.
    Thus, in this case $\adjc(\sub(\cC))\zz$ is a linearly distributive category.\label{item:dia-ldist}
  \item If in~\ref{item:dia-ldist} \cC is furthermore a Heyting category, then the polycategories $\sub(\cC)(\Gamma)$ also have weak homs, so that $\ucd{\sub(\cC)}\zz$ is a full multiplicative category, as in~\cite{depaiva:dialectica-like}.
    (In this paper we will not consider the additive fragment, i.e.\ the cartesian products and coproducts in $\cdt$, or the exponential modalities $\oc$ and $\wn$.)
  \item If $\cC$ is furthermore a \emph{Boolean} category, then $\sub(\cC)$ is a presheaf of Boolean algebras, which as linearly distributive categories are $\ast$-autonomous; thus in this case $\adjc(\sub(\cC))\zz$ is also $\ast$-autonomous.
    More generally, we can restrict to the sub-Boolean-algebras of $\neg\neg$-closed subobjects in $\sub(\cC)$; this produces the $\ast$-autonomous category $\mathsf{Dec}\,\mathbf{GC}$ from~\cite{depaiva:dialectica-like}.
  \end{enumerate}
\end{egs}

% \begin{rmk}
%   There are other categorical Dialectica constructions, such as the one called $\mathbf{DC}$ in~\cite{depaiva:dialectica} which allows the backwards arrows (but not the forwards ones) to take an extra parameter.
%   Our construction from \cref{sec:2cd} can be modified to become a generalization of $\mathbf{DC}$ instead of $\mathbf{GC}$, by requiring a cartesian action on the domains (duplication and deletion of objects, i.e.\ contraction and weakening in logic), adding the extra parameters on the backwards arrows, and restricting all 2-morphisms to be co-unary (since $\mathbf{DC}$ is only monoidal, not linearly distributive or $\ast$-autonomous).
%   Like the construction we have described in detail, this one also produces a whole virtual linear hyperdoctrine as the output; thus it is even a generalization of the fibered Dialectica constructions of~\cite{biering:dialectica,hofstra:dialectica}.
% \end{rmk}

\section{Dimension $(0,0)$: the Chu construction}
\label{sec:rep2-chu}

The Chu construction is generally defined as an operation on closed
symmetric monoidal categories equipped with an arbitrary object
$\Omega$;
see~\cite{chu:construction,chu:constr-app,barr:chu-history,pavlovic:chu-i}.
We fit this into our context with the following construction.

\begin{defn}\label{eg:frobdisc}
  Any set $X$ is the set of objects of a
  \textbf{Frobenius-discrete}\footnote{A \emph{discrete} polycategory,
    in my preferred terminology, would be one in the image of the left
    adjoint to the forgetful functor from polycategories to sets,
    i.e.\ one containing only identity arrows.} polycategory $X\sfd$,
  for which a polyarrow $(x_1,\dots,x_m) \to (y_1,\dots,y_n)$ consists
  of an element $z\in X$ such that $x_i = z$ and $y_j = z$ for all
  $i,j$.
\end{defn}

The Frobenius-discrete polycategories are equivalently the coproducts
of copies of the terminal polycategory; this motivates the name, since
the terminal (symmetric) polycategory is freely generated by a
(commutative) Frobenius algebra.  Note that a $(0,0)$-ary arrow in a
Frobenius-discrete polycategory is still determined by a single
object, even though there is no domain or codomain for that object to
appear in.

The construction $X\mapsto X\sfd$ is functorial, so any $\Set$-valued
presheaf $\Omega$ on a multicategory \cC induces a presheaf of
polycategories $\Omega\sfd$.  Applying the construction of
\cref{sec:2cd}, we obtain another presheaf of polycategories
$\adjc(\Omega\sfd)$, whose objects are triples
$(\phi\p, \phi\m, \uphi)$, where $\phi\p,\phi\m$ are objects of \bC
and $\uphi: (\Gamma,\phi\m,\phi\p) \to \Omega$.  A morphism
$(\phi_1,\dots,\phi_m)\to (\psi_1,\dots,\psi_n)$ in
$\adjc(\Omega\sfd)(\Gamma)$ consists of
\begin{align*}
  f_j : (\phi_1\p, \dots, \phi_m\p, \psi_1\m, \dots, \widehat{\psi_j\m},\dots \psi_n\m) &\longrightarrow \psi_j\p\\
  g_i : (\phi_1\p , \dots, \widehat{\phi_i\p},\dots \phi_m\p , \psi_1\m, \dots, \psi_n\m) &\longrightarrow \phi_i\m\\
  \alpha : (\phi_1\p, \dots, \phi_m\p, \psi_1\m, \dots, \psi_n\m) &\longrightarrow \Omega
\end{align*}
such that
\[ (\uphi_1\circ_{\phi_1\m} g_1) = \dots = (\uphi_m\circ_{\phi_m\m}
  g_m) = (\upsi_1 \circ_{\psi_1\p} f_1) = \dots= (\upsi_n
  \circ_{\psi_n\p} f_n) = \alpha. \] (Of course, if $m+n>0$ then
$\alpha$ is uniquely determined by the $f$'s and $g$'s, but if $m=n=0$
then $\alpha$ is the only datum.)

A Frobenius-discrete polycategory always has duals; in fact each
object is its own dual.  Thus, $\adjc(\Omega\sfd)$ also has duals, and
in particular $\adjc(\Omega\sfd)\zz$ is a polycategory with duals (in
fact it is a $\ast$-polycategory; see \cref{sec:cyclic}).  However, a
Frobenius-discrete polycategory almost never has tensors or cotensors
(see \cref{rmk:chu1}).  So we cannot obtain tensors and cotensors in
$\adjc(\Omega\sfd)$ from \cref{thm:2tens}, but we can construct them
in a different way (coinciding with the usual Chu construction).

\begin{thm}\label{thm:chu}
  Suppose \cC is a closed symmetric monoidal category with pullbacks,
  and $\Omega$ is an object of \cC (identified with its representable
  presheaf $\yon_\Omega$).  Then $\adjc(\Omega\sfd)$ has tensors, a
  unit, cotensors, and a counit (and hence is a presheaf of
  $\ast$-autonomous categories).
\end{thm}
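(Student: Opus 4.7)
The plan is to follow the template of \cref{thm:2tens}, but with the cartesian products appearing in the negative components replaced by pullbacks over $\Omega$. The motivation is that a morphism in $\adjc(\Omega\sfd)(\Gamma)$ consists of primary components together with a secondary component which, in the Frobenius-discrete case, simply asserts that certain composites with $\uphi_i$ and $\upsi_j$ are literally \emph{equal} as maps into $\Omega$; and a pullback over $\Omega$ is exactly the universal gadget enforcing such an equality.

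Concretely, I would define
\[ (\phi\tens\psi)\p = \phi\p \otimes \psi\p, \qquad (\phi\tens\psi)\m = [\phi\p,\psi\m] \times_{[\phi\p\otimes\psi\p,\Omega]} [\psi\p,\phi\m], \]
where the two maps into $[\phi\p\otimes\psi\p,\Omega]$ are obtained by transposing through the internal-hom and postcomposing with $\upsi$ and $\uphi$ respectively. By construction, the two resulting composites $(\phi\tens\psi)\m \otimes \phi\p \otimes \psi\p \to \Omega$ coincide, giving the common value $\ul{\phi\tens\psi}$. The universal morphism $(\phi,\psi) \to (\phi\tens\psi)$ uses the tensor multiplication in $\cC$ as its positive component and the pullback projections followed by evaluation as its two negative components. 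The unit is $\top\p = I$, $\top\m = \Omega$, with $\ul{\top}$ the canonical isomorphism $I \otimes \Omega \cong \Omega$. Duals are provided directly by \cref{thm:2tens}, and the cotensor and counit are then obtained by dualization: $\phi\cotens\psi = (\phi\d \tens \psi\d)\d$ and $\bot = \top\d$.

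To verify the universal property of the tensor, I would take a morphism $(\phi\tens\psi, \xi) \to \ze$ with primary components $f, g, h$ exactly as displayed in the proof of \cref{thm:2tens}, and show that composing with the universal $(\phi, \psi) \to (\phi\tens\psi)$ produces a bijective correspondence with the expected data of a morphism $(\phi, \psi, \xi) \to \ze$. The positive part is split via the universal property of $\phi\p \otimes \psi\p$; the legs of $h$ are split into maps landing in $\phi\m$ and $\psi\m$ via the universal property of the internal-hom; and the universal property of the pullback then supplies precisely the equality of composites with $\uphi$ and $\upsi$ that is required by the Frobenius-discrete secondary component on the output morphism $(\phi,\psi,\xi)\to\ze$. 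The general multi-object case is identical up to notational overhead, and the cotensor and counit cases follow by formally dual arguments. The main (and only mild) obstacle is the bookkeeping: carefully tracking which equalities among $\Omega$-valued maps are supplied by the pullback definition of $(\phi\tens\psi)\m$ and which are imposed by the Frobenius-discrete secondary condition on the input morphism. Once unwound, they line up by construction.
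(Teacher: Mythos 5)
Your construction is the same as the paper's: tensor via a pullback of internal-homs over a hom into $\Omega$ (enforcing the equality that the Frobenius-discrete secondary component demands), universal morphism built from the tensor multiplication and the pullback projections, cotensors and counit by dualizing through the duals already supplied by \cref{thm:2tens}, and the universal property checked by splitting $f,g,h$ along the universal morphism. One correction is needed, though: the theorem is about the whole presheaf $\adjc(\Omega\sfd)$, and in the component $\adjc(\Omega\sfd)(\Gamma)$ the datum $\uphi$ is a morphism $(\Gamma,\phi\m,\phi\p)\to\Omega$, so the two transposed maps out of $[\phi\p,\psi\m]$ and $[\psi\p,\phi\m]$ land in $[\Gamma\otimes\phi\p\otimes\psi\p,\Omega]$, not in $[\phi\p\otimes\psi\p,\Omega]$; the pullback must be taken over the former, and likewise the unit needs $\top\m=[\Gamma,\Omega]$ rather than $\Omega$. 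Your formulas are the special case $\Gamma=()$, i.e.\ they establish the claim only for $\adjc(\Omega\sfd)\zz$; the fix is purely notational and the rest of your argument goes through verbatim.
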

\begin{proof}
  The tensor product of $(\phi\p,\phi\m,\uphi)$ and $(\psi\p,\psi\m,\upsi)$ is now
  \begin{align*}
    (\phi\tens\psi)\p &= (\phi\p \otimes \psi\p)\\
    (\phi\tens\psi)\m &= [\phi\p, \psi\m] \times_{[\Gamma\otimes \phi\p\otimes\psi\p, \Omega]} [\psi\p, \phi\m]
  \end{align*}
  (where $\Gamma$ denotes abusively the tensor product of all the objects in $\Gamma$),
  with
  \[ \ul{\phi\tens\psi}: (\Gamma,[\phi\p,\psi\m] \times_{[\Gamma\otimes \phi\p\otimes\psi\p, \Omega]} [\psi\p,\phi\m], \phi\p\otimes \psi\p) \to \Omega\]
  induced by the universal property of $\phi\p\otimes \psi\p$ from the \emph{common value} of the following two morphisms
  \begin{equation}
  \begin{array}[c]{c}\scriptstyle
    (\Gamma,[\phi\p,\psi\m] \times_{[\Gamma\otimes\phi\p\otimes\psi\p, \Omega]} [\psi\p,\phi\m], \phi\p,\psi\p)
    \to (\Gamma,[\phi\p,\psi\m], \phi\p,\psi\p)
    \to (\Gamma,\psi\m,\psi\p)
    \xto{\upsi} \Omega\\
    \scriptstyle (\Gamma,[\phi\p,\psi\m] \times_{[\Gamma\otimes\phi\p\otimes\psi\p, \Omega]} [\psi\p,\phi\m], \phi\p,\psi\p)
    \to (\Gamma,[\psi\p,\phi\m], \phi\p,\psi\p)
    \to (\Gamma,\phi\m,\phi\p)
    \xto{\uphi} \Omega.
  \end{array}\label{eq:2tenschu}
  \end{equation}
  Its universal morphism is defined similarly:
  \begin{align*}
    (\phi\p, \psi\p) &\to (\phi\p \otimes \psi\p)\\
    ([\phi\p, \psi\m] \times_{[\Gamma\otimes\phi\p\otimes\psi\p, \Omega]} [\psi\p, \phi\m], \phi\p) &\to ([\phi\p, \psi\m],\phi\p) \to \psi\m\\
    ([\phi\p, \psi\m] \times_{[\Gamma\otimes\phi\p\otimes\psi\p, \Omega]} [\psi\p, \phi\m], \psi\p) &\to ([\psi\p, \phi\m],\psi\p) \to \phi\m
  \end{align*}
  plus the fact that the latter two of these, when composed with
  $\upsi$ and $\uphi$ respectively, yield~\eqref{eq:2tenschu}.  For
  the universal property, a morphism $(\phi\tens\psi, \xi) \to (\ze)$
  in $\adjc(\Omega\sfd)$ now consists of morphisms in $\cC$:
  \begin{align*}
    f &: (\phi\p\otimes\psi\p, \xi\p) \to \ze\p\\
    g &: (\phi\p\otimes\psi\p,\ze\m) \to\xi\m\\
    h &: (\xi\p,\ze\m) \to [\phi\p, \psi\m] \times_{[\Gamma\otimes\phi\p\otimes\psi\p, \Omega]} [\psi\p, \phi\m]
  \end{align*}
  such that
  \begin{equation}
    \ul{\phi\tens\psi} \circ (h,1) = \uxi \circ (1,g) = \uze \circ (f,1).\label{eq:2tens-eq}
  \end{equation}
  Composing with the universal morphism again implements the universal
  property of $\phi\p\otimes\psi\p$ and $[\phi\p, \psi\m]$ and
  $[\psi\p, \phi\m]$ to get
  \begin{alignat*}{2}
    f' &: (\phi\p,\psi\p, \xi\p) \to \ze\p &\qquad
    h' &: (\phi\p,\xi\p,\ze\m) \to \psi\m\\
    g' &: (\phi\p,\psi\p,\ze\m) \to\xi\m &
    h'' &: (\psi\p,\xi\p,\ze\m) \to \phi\m
  \end{alignat*}
  as required for a morphism $(\phi,\psi,\xi) \to (\ze)$; but now $h$
  is only determined by $h'$ and $h''$ subject to a compatibility
  condition of agreement in
  ${[\Gamma\otimes\phi\p\otimes\psi\p, \Omega]}$, which means
  equivalently that $\uphi\circ h'' = \upsi\circ h'$.  This is ensured
  by the equality condition for a morphism
  $(\phi,\psi,\xi) \to (\ze)$:
  \[ \uphi \circ h'' = \upsi \circ h' = \uxi \circ g' = \uze \circ f'.
  \]
  For the rest of the equality conditions, composing the morphisms
  in~\eqref{eq:2tens-eq} with the universal morphism
  $u : (\phi\p,\psi\p) \to (\phi\p\otimes\psi\p)$, which preserves and
  reflects equalities since it is a bijection, yields
  \begin{equation}
    \ul{\phi\tens\psi} \circ h \circ u = \uxi \circ g' = \uze \circ f'.
  \end{equation}
  and $\ul{\phi\tens\psi} \circ h \circ u$ is exactly the common value $\uphi\circ h'' = \upsi\circ h'$.

  As before, the general case is analogous.
  The unit is
  \begin{mathpar}
    \top\p = I\and
    \top\m = [\Gamma,\Omega]
  \end{mathpar}
  with $\ul{\top} : (\Gamma,[\Gamma,\Omega],I) \to \Omega$ induced by
  the universal property of $I$ from the evaluation map
  $(\Gamma,[\Gamma,\Omega]) \to \Omega$.  The cotensors and the counit
  are dual.
\end{proof}

Thus, the reason the Dialectica and Chu constructions look different
is that while they are both instances of a single abstract
construction at the virtual level, they are representable for
different reasons.

\begin{rmk}\label{rmk:chu1}
  It is natural to ask what the intersection of the Dialectica and Chu
  constructions is, i.e.\ when do both \cref{thm:2tens} and
  \cref{thm:chu} apply?  The reader can check that a
  Frobenius-discrete polycategory can only have tensors and a unit, or
  cotensors and a counit, when it has exactly one object.  Thus, this
  happens if and only if $\Omega=1$ is a terminal object, in which
  case the underlying ordinary category of $\adjc(1)$ is
  $\cC\times \cC\op$.
\end{rmk}

\begin{rmk}
  For an arbitrary multicategory \cC with \Set-valued presheaf \Omega,
  even if the hypotheses of \cref{thm:chu} fail, it still makes sense
  to refer to the polycategory $\adjc(\Omega\sfd)\zz$ as a \textbf{Chu
    construction} $\chu(\cC,\Omega)$.  A similar generalized Chu
  construction taking multi-bicategories to ``cyclic''
  poly-bicategories appears in~\cite[Example 1.8(2)]{cks:polybicats},
  but the symmetric case does not appear to be in the literature.
  (The symmetric Chu construction is not simply obtained by applying
  the non-symmetric one to a symmetric input.)

  The universal property of the Chu construction described
  in~\cite{pavlovic:chu-i} also generalizes cleanly to the
  polycategorical version: $\chu$ is a right adjoint to the forgetful
  functor from $\ast$-polycategories to co-subunary polycategories
  (i.e.\ multicategories equipped with a $\Set$-valued presheaf).  The
  special case of this for $\chu(-,1)$, namely that it is a right
  adjoint to the forgetful functor from $\ast$-polycategories to
  multicategories, appears in~\cite{dh:dk-cyc-opd-v1}.
\end{rmk}

\section{Dimension $(1,0)$: the 2-Chu and double Chu constructions}
\label{sec:2-chu}

Having recovered the classical Dialectica and Chu constructions, we now categorify the latter.

\begin{defn}
  Let $\vec\phi = (\phi_1,\dots,\phi_n)$ be a list of objects of a
  category \cX.  A \textbf{clique} on $\vec\phi$ is a functor from the
  chaotic category on $n$ objects to \cX that picks out the objects of
  $\vec\phi$, i.e.\ a family of isomorphisms
  $\theta_{ij}:\phi_i \toiso \phi_j$ such that $\theta_{ii} =1$ and
  $\theta_{jk}\theta_{ij} = \theta_{ik}$.
\end{defn}

Note there is a unique clique on the empty list (this is the cop-out from \cref{rmk:00}).

\begin{lem}
  Given cliques on $(\phi_1,\dots,\phi_m)$ and $(\xi_1,\dots,\xi_n)$
  with $\phi_{j_0} = \xi_{k_0}$, there is an induced clique on
  $(\vec\phi_{\neq j_0},\vec \xi_{\neq k_0})$, and this operation is
  associative.
\end{lem}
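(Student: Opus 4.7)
The plan is to construct the induced clique by a straightforward case analysis, then repackage everything through the equivalence between cliques and functors out of indiscrete (chaotic) groupoids, under which both existence and associativity reduce to a pushout calculation.

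For the construction, write $\theta_{ij} : \phi_i \toiso \phi_j$ and $\eta_{kl} : \xi_k \toiso \xi_l$ for the given cliques and set $c := \phi_{j_0} = \xi_{k_0}$. On the combined list $(\vec\phi_{\neq j_0}, \vec\xi_{\neq k_0})$ define the connecting isomorphisms $\vartheta$ as follows: between two surviving $\phi$-indices $i_1, i_2$, use $\theta_{i_1 i_2}$; between two surviving $\xi$-indices $k_1, k_2$, use $\eta_{k_1 k_2}$; and for a mixed pair $(\phi_i, \xi_k)$, use the composite $\phi_i \xto{\theta_{i,j_0}} c \xto{\eta_{k_0,k}} \xi_k$, with the inverse in the opposite orientation. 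Diagonal entries are identities by construction, and the cocycle condition $\vartheta_{jk}\vartheta_{ij} = \vartheta_{ik}$ splits into cases according to the pure or mixed nature of the triple of indices; each case collapses to the cocycle identity for $\theta$, for $\eta$, or for both, using $c = \phi_{j_0} = \xi_{k_0}$ whenever the intermediate index lies on the opposite side from the two outer indices.

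A cleaner packaging, which will also make associativity transparent, is to observe that a clique on a list of length $n$ is precisely a functor $I_n \to \cX$, where $I_n$ denotes the indiscrete groupoid on $\{1,\dots,n\}$. Under this identification, the hypothesis $\phi_{j_0} = \xi_{k_0}$ says exactly that the two cliques form a cocone under the span $I_m \leftarrow I_1 \to I_n$ picking out $j_0$ and $k_0$. Since indiscrete groupoids are free on their object sets, the pushout $I_m \sqcup_{I_1} I_n$ is again indiscrete on the amalgamated index set, and its universal property produces a unique functor to $\cX$ extending both cliques. The induced clique of the statement is the restriction of this functor along the inclusion of the $m + n - 2$ indices distinct from the glued one; one checks readily that it agrees with the case-by-case description above.

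For associativity, suppose three cliques with two compatible identifications permitting iterated composites, for instance $\phi_{j_0} = \xi_{k_0}$ and $\xi_{k_1} = \zeta_{l_0}$ with $k_0 \neq k_1$ (and symmetrically the parallel case where the second identification involves $\phi$ and $\zeta$ directly). Under the functorial picture each composite amounts to an iterated pushout of indiscrete groupoids; pushouts commute with each other, and both orders produce the indiscrete groupoid on the same amalgamated index set. Hence the universal property forces the two induced functors into $\cX$ to coincide, and restricting to the surviving indices yields equal cliques. The main obstacle, if any, is purely notational: tracking which indices are glued, which survive, and which symmetric-group element is implicit at each step, in exactly the way that complicates the associativity axioms~\eqref{eq:polyassoc1}--\eqref{eq:polyassoc3}. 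The pushout-of-indiscrete-groupoids perspective sidesteps all of this bookkeeping, since such a groupoid is determined by its set of objects.
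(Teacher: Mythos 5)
Your first paragraph is exactly the paper's proof: the paper simply declares the isomorphisms among the $\phi$'s and among the $\xi$'s to be the given ones and defines the mixed isomorphism $\phi_i \toiso \xi_k$ to be the composite $\phi_i \toiso \phi_{j_0} = \xi_{k_0} \toiso \xi_k$, leaving the cocycle and associativity verifications to the reader; your case analysis fills those in correctly. The pushout repackaging is a genuinely different (and pleasant) way to organize the associativity check, but it rests on one incorrect justification: indiscrete (chaotic) groupoids are \emph{cofree} on their object sets, not free --- the chaotic-category functor is right adjoint to $\mathrm{Ob}$, so it preserves limits, and there is no formal reason for it to preserve pushouts. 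Indeed it does not in general: $I_m \sqcup_{I_0} I_n$ is a disjoint union of chaotic groupoids, which is not chaotic. The statement you actually need, that $I_m \sqcup_{I_1} I_n$ is chaotic on the amalgamated object set, is nevertheless true, but it requires a direct argument: any zigzag between the two halves must pass through the single glued object, and since the unique endomorphism of that object in each half is an identity, every such word collapses to a single composite through it --- which is precisely the elementary construction of your first paragraph in disguise. With that repaired, the ``iterated pushouts agree'' argument for associativity goes through, and in any case the hands-on verification sketched in your first paragraph already suffices and matches what the paper intends.
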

\begin{proof}
  We take the isomorphisms among the $\phi$'s and the $\xi$'s to be
  the given ones, and the isomorphism $\phi_j \toiso \xi_k$ to be the
  composite $\phi_j \toiso \phi_{j_0} = \xi_{k_0}\toiso \xi_k$.
\end{proof}

\begin{defn}
  For any category \cX, the \textbf{Frobenius pseudo-discrete}
  polycategory $\cX\sfpd$ has the same objects as \cX, with
  $\cX\sfpd(\Gamma;\Delta)$ the set of cliques on $(\Gamma,\Delta)$.
\end{defn}

This defines a functor $(-)\sfpd$ from the 1-category of categories to
the 1-category of polycategories.  Thus, a presheaf of categories
\Omega on a multicategory \cC gives rise to a presheaf of
polycategories $\Omega\sfpd$.  We write
\[ \chuz(\cC,\Omega) = \adjc(\Omega\sfpd)\zz. \] This polycategory
will be the underlying 1-dimensional structure of our 2-Chu
construction: its objects are triples $(A\p,A\m,\uA)$ with
$\uA : (A\p,A\m)\to \Omega$, and a polyarrow
$(A_1,\dots,A_m) \to (B_1,\dots,B_n)$ in $\chuz(\cC,\Omega)$ consists
of morphisms in $\cC$:
\begin{align*}
  f\p_j : (A_1\p, \dots, A_m\p, B_1\m, \dots, \widehat{B_j\m},\dots B_n\m) &\longrightarrow B_j\p\\
  f\m_i : (A_1\p , \dots, \widehat{A_i\p},\dots A_m\p , B_1\m, \dots, B_n\m) &\longrightarrow A_i\m
\end{align*}
together with a clique on
\[ \big((\uA_1\circ_{A_1\m} f\m_1),\dots,(\uA_m\circ_{A_m\m}
  f\m_m),(\uB_1 \circ_{B_1\p} f\p_1),\dots, (\uB_n \circ_{B_n\p}
  f\p_n)\big).\] That is, $\chuz(\cC,\Omega)$ is the polycategory of
$\Omega$-polarized objects and polarized multivariable adjunctions
described in \cref{sec:intro-mvar} (of the strict sort having exactly
one $(0,0)$-ary morphism, as in \cref{rmk:00}).

In particular, if $\Omega = \Set\in \cat$ and each $A,B$ is of the
form $\rep \cA = (\cA,\cA\op,\hom_\cA)$ (recall \cref{rmk:duality}),
then we can write the functors involved in a morphism
$(A_1,\dots,A_m) \to (B_1,\dots,B_n)$
\begin{align*}
  f\p_j : (\cA_1, \dots, \cA_m, \cB_1\op, \dots, \widehat{\cB_j\op},\dots \cB_n\op) &\longrightarrow \cB_j\\
  (f\m_i)\op : (\cA_1\op , \dots, \widehat{\cA_i\op},\dots \cA_m\op , \cB_1, \dots, \cB_n) &\longrightarrow \cA_i.
\end{align*}
and the clique becomes the family of adjunction isomorphisms
\[ \cA_1(f\m_1(a_2,\dots,a_m,b_1,\dots,b_n),a_1) \cong \cdots \cong
  \cB_n(b_n,f\p_n(a_1,\dots,a_m,b_1,\dots,b_{n-1})). \] Thus the
sub-polycategory of $\chuz(\cat,\Set)$ determined by objects of this
form is the polycategory of multivariable adjunctions.\footnote{We can
  also exclude $\Set$ from $\cat$ for size reasons, allowing the
  latter to consist of only \emph{small} categories, and still have
  $\Omega$ be a non-representable presheaf of categories.}

Now we want to incorporate the 2-categorical structure of \sC into the
output as well, obtaining a 2-polycategory and even a poly double
category.  By a \textbf{2-polycategory} we mean a polycategory
(strictly) enriched over $\cat$, so that the hom-objects
$\sC(\Gamma;\Delta)$ are categories and all operations are functorial.
Similarly, a \textbf{2-multicategory} can be defined as a co-unary
2-polycategory, while a \textbf{2-presheaf} on a 2-multicategory is a
$\cat$-valued presheaf whose actions are 2-functorial.  For instance,
any object $\Omega$ of a 2-multicategory represents a 2-presheaf.  As
in \cref{thm:pshf-varieties}\ref{item:cosubunary}, a 2-multicategory
equipped with a 2-presheaf can equivalently be regarded as a
co-subunary 2-polycategory.

Of course, if \cC is an ordinary multicategory regarded as a locally
discrete 2-multicategory, a 2-presheaf on \cC is just a $\cat$-valued
presheaf.  In particular, $\chuz$ can be regarded as a functor defined
on the category of subunary 2-polycategories with only identity
2-cells between co-unary arrows.

Next, recall that any category $\cA$ is the object-of-objects of a
canonical internal category in $\cat$ whose object-of-morphisms is
$\cA^\dtwo$, the category of arrows in $\cA$.  Put differently, this
is a double category $\dQ(\cA)$ whose vertical and horizontal arrows
are both those of $\cA$, and whose 2-cells are commutative squares.
Similarly, any 2-category $\sC$ can be enhanced to an internal
category $\dQ(\sC)$ in $2\text-\cat$ (a ``cylindrical'' 3-dimensional
structure) whose object-of-morphisms is $\mathit{Lax}(\dtwo,\sC)$ (the
2-category whose objects are arrows of \sC, whose morphisms are
squares in \sC inhabited by a 2-cell, and whose 2-cells are commuting
cylinders in $\sC$).  The underlying double category of this structure
consists of squares or ``quintets'' in $\sC$.

The same idea works for polycategories: any 2-polycategory \sP can be
enhanced to an internal category $\dQ(\sP)$ in the category
$2\text-\poly$ of 2-polycategories.  This gives a 3-dimensional
structure containing:
\begin{itemize}
\item Objects: those of \sP.
\item Horizontal poly-arrows: those of \sP.
\item Horizontal 2-cells between parallel poly-arrows: those of \sP.
\item Vertical arrows: the unary co-unary arrows of \sP.
\item 2-cells of the following shape:
  \[
  \begin{tikzcd}
    (A_1,\dots,A_m) \ar[r,"f"] \ar[d,shift left=7,"h_m"] \ar[d,shift right=7,"h_1"'] \ar[d,phantom,"\cdots"] \ar[dr,phantom,"\Downarrow"] &
    (B_1,\dots,B_n) \ar[d,shift left=7,"k_n"] \ar[d,shift right=7,"k_1"'] \ar[d,phantom,"\cdots"] \\
    (C_1,\dots,C_m) \ar[r,"g"'] &
    (D_1,\dots,D_n)
  \end{tikzcd}
  \]
  coming from 2-cells $k_1 \circ_{B_1} \cdots \circ k_n \circ_{B_n} f \Longrightarrow g \circ_{C_1} h_1 \circ \cdots \circ_{C_m} h_m$ in \sP.
\item ``Poly-cylinders'': commutativity relations in \sP.
  \[
  \begin{tikzcd}
    (A_1,\dots,A_m) \ar[r,shift left=3] \ar[r,shift right=2] \ar[r,phantom,"\Downarrow",shift left=.5] \ar[d,shift left=7] \ar[d,shift right=7] \ar[d,phantom,"\cdots"] \ar[dr,phantom,"\Downarrow"] &
    (B_1,\dots,B_n) \ar[d,shift left=7] \ar[d,shift right=7] \ar[d,phantom,"\cdots"] \\
    (C_1,\dots,C_m) \ar[r] &
    (D_1,\dots,D_n)
  \end{tikzcd}
  \quad=\quad
  \begin{tikzcd}
    (A_1,\dots,A_m)\ar[r] \ar[d,shift left=7] \ar[d,shift right=7] \ar[d,phantom,"\cdots"] \ar[dr,phantom,"\Downarrow"] &
    (B_1,\dots,B_n) \ar[d,shift left=7] \ar[d,shift right=7] \ar[d,phantom,"\cdots"] \\
    (C_1,\dots,C_m)  \ar[r,shift left=3] \ar[r,shift right=2] \ar[r,phantom,"\Downarrow",shift left=.5] &
    (D_1,\dots,D_n)
  \end{tikzcd}
  \]
\end{itemize}
In particular, when \sP is co-subunary, the 2-cells of $\dQ(\sP)$ are all horizontally co-unary or co-nullary:
\[ \begin{tikzcd}
    (A_1,\dots,A_m) \ar[r] \ar[d,shift left=7] \ar[d,shift right=7] \ar[d,phantom,"\cdots"] \ar[dr,phantom,"\Downarrow"] &
    B \ar[d] \\
    (C_1,\dots,C_m) \ar[r] &
    D
  \end{tikzcd}
  \quad\text{or}\quad
  \begin{tikzcd}
    (A_1,\dots,A_m) \ar[r] \ar[d,shift left=7] \ar[d,shift right=7] \ar[d,phantom,"\cdots"] \ar[dr,phantom,"\Downarrow"] &
    () \ar[d,equals] \\
    (C_1,\dots,C_m) \ar[r] &
    ()
  \end{tikzcd}
\]

Now let \sC be a 2-multicategory equipped with a 2-presheaf \Omega, as
before.  Regarding $(\sC,\Omega)$ as a co-subunary 2-polycategory, we
form the internal category $\dQ(\sC,\Omega)$ in $2\text-\poly$, which
is also co-subunary.  Now we forget the nonidentity horizontal 2-cells
between co-unary arrows, obtaining an internal category
$\dQ'(\sC,\Omega)$ in the category of ordinary multicategories
equipped with $\cat$-valued presheaves.  Finally, this latter category
is the domain of the above functor $\chuz$, which preserves pullbacks
and hence internal categories.  Thus, we can define:

\begin{defn}
  The \textbf{double Chu construction} of $(\sC, \Omega)$ is
  \[\dchu(\sC,\Omega) = \chuz(\dQ'(\sC,\Omega)).\]
  It is an internal category in polycategories, which we call a \textbf{poly double category}.
\end{defn}

Tracing through the constructions, we see that $\dchu(\sC,\Omega)$ can
be described more explicitly as follows.

\begin{itemize}
\item Its objects are triples $(A\p,A\m,\uA)$, with $\uA : (A\p,A\m)\to \Omega$.
\item Its horizontal poly-arrows are families of morphisms
  \begin{align*}
    f\p_j : (A_1\p, \dots, A_m\p, B_1\m, \dots, \widehat{B_j\m},\dots B_n\m) &\longrightarrow B_j\p\\
    f\m_i : (A_1\p , \dots, \widehat{A_i\p},\dots A_m\p , B_1\m, \dots, B_n\m) &\longrightarrow A_i\m
  \end{align*}
  equipped with a clique (the ``adjunction isomorphisms'') on
  \[ \big((\uA_1\circ_{A_1\m} f\m_1),\dots,(\uA_m\circ_{A_m\m} f\m_m),(\uB_1 \circ_{B_1\p} f\p_1),\dots, (\uB_n \circ_{B_n\p} f\p_n)\big).\]
\item A vertical arrow $u:A\to B$ is a triple $(u\p,u\m,\uu)$, where $u\p : A\p\to B\p$ and $u\m: A\m\to B\m$ are morphisms in \sC (note that both go in the forwards direction) and
  \[\uu : \uA \Longrightarrow \uB \circ (u\p,u\m)\]
  is a morphism in the hom-category $\sC(A\p,A\m;)$, i.e.\ a 2-cell in \sC.
  This comes from a co-nullary 2-cell in $\dQ'(\sC,\Omega)$.
\item A 2-cell 
  \[
  \begin{tikzcd}
    (A_1,\dots,A_m) \ar[r,"f"]
    \ar[d,shift left=7,"u_m"] \ar[d,shift right=7,"u_1"'] \ar[d,phantom,"\cdots"] \ar[dr,phantom,"\Downarrow\scriptstyle\mu"] &
    (B_1,\dots,B_n) \ar[d,shift left=7,"v_n"] \ar[d,shift right=7,"v_1"'] \ar[d,phantom,"\cdots"] \\
    (C_1,\dots,C_m) \ar[r,"g"'] &
    (D_1,\dots,D_n)
  \end{tikzcd}
  \]
  consists of a family of 2-cells in \sK:
  \[
  \begin{tikzcd}
    (A_1\p, \dots, A_m\p, B_1\m, \dots, \widehat{B_j\m},\dots B_n\m) \ar[r,"f\p_j"{name=f}]
    \ar[d,shift left=28] \ar[d,shift right=28]
    \ar[d,phantom,"{\scriptstyle\cdots (u_1\p,\dots,u_m\p,v_1\m,\dots,\widehat{v_j\m},\dots,v_n\m)\cdots}"]
    % \ar[dr,phantom,"\Downarrow\scriptstyle\mu"]
    &
    B_j\p \ar[d,"v_j\p"] \\
    (C_1\p, \dots, C_m\p, D_1\m, \dots, \widehat{D_j\m},\dots D_n\m) \ar[r,"g\p_j"'{name=g}] &
    D_j\p
    \ar[from=f,to=g,phantom,"\Downarrow\scriptstyle\mu_j\p"]
  \end{tikzcd}
  \]
  and
  \[
  \begin{tikzcd}
    (A_1\p , \dots, \widehat{A_i\p},\dots A_m\p , B_1\m, \dots, B_n\m) \ar[r,"f\m_i"{name=f}]
    \ar[d,shift left=28] \ar[d,shift right=28]
    \ar[d,phantom,"{\scriptstyle\cdots(u_1\p , \dots, \widehat{u_i\p},\dots u_m\p , v_1\m, \dots, v_n\m) \cdots}"]
    % \ar[dr,phantom,"\Downarrow\scriptstyle\mu"]
    &
    A_i\m \ar[d,"u_i\m"] \\
    (C_1\p , \dots, \widehat{C_i\p},\dots C_m\p , D_1\m, \dots, D_n\m) \ar[r,"g\m_i"'{name=g}] &
    C_i\m
    \ar[from=f,to=g,phantom,"\Downarrow\scriptstyle\mu_i\m"]
  \end{tikzcd}
  \]
  such that \emph{any two} of these 2-cells satisfy a commutativity condition relating them to the adjunction isomorphisms of $f,g$ and the structure 2-cells $\uu,\uv$.
  For instance, the condition for $\mu_1\p$ and $\mu_1\m$  is shown in \cref{fig:2-cell-compat}.
  \begin{figure}
    \centering
    \begin{equation*}%\label{eq:2-cell-compat}
  \hspace{-2cm}
  \begin{tikzcd}[column sep=huge,row sep=large]
    (A_1\p , \dots, A_m\p , B_1\m, \dots, B_n\m)
    \ar[r,"{\uA_1\circ (1,f\m_1)}"{name=f2},shift left=4]
    \ar[r,"{\uB_1\circ (1,f\p_1)}"'{name=f},shift right]
    \ar[d,shift left=20] \ar[d,shift right=20]
    \ar[d,phantom,"{\scriptstyle\cdots(u_1\p , \dots, u_m\p , v_1\m, \dots, v_n\m) \cdots}"]
    % \ar[dr,phantom,"\Downarrow\scriptstyle\mu"]
    &
    \Omega \ar[d,equals] \\
    (C_1\p , \dots, C_m\p , D_1\m, \dots, D_n\m)
    \ar[r,"{\uD_1\circ (1,g\p_1)}"'{name=g}]
    &
    \Omega
    \ar[from=f,to=g,phantom,"{\Downarrow\scriptstyle\uv_1\circ (1,\mu_1\p)}"]
    \ar[from=f2,to=f,phantom,"\Downarrow\scriptstyle\cong"]
  \end{tikzcd}
  \quad=\quad
  \begin{tikzcd}[column sep=huge,row sep=large]
    (A_1\p , \dots, A_m\p , B_1\m, \dots, B_n\m) \ar[r,"{\uA_1\circ (1,f\m_1)}"{name=f}]
    \ar[d,shift left=20] \ar[d,shift right=20]
    \ar[d,phantom,"{\scriptstyle\cdots(u_1\p , \dots, u_m\p , v_1\m, \dots, v_n\m) \cdots}"]
    % \ar[dr,phantom,"\Downarrow\scriptstyle\mu"]
    &
    \Omega \ar[d,equals] \\
    (C_1\p , \dots, C_m\p , D_1\m, \dots, D_n\m)
    \ar[r,"{\uC_1\circ (1,g\m_1)}"{name=g},shift left]
    \ar[r,"{\uD_1\circ (1,g\p_1)}"'{name=g2},shift right=4]
    &
    \Omega
    \ar[from=f,to=g,phantom,"{\Downarrow\scriptstyle\uu_1\circ (1,\mu_1\m)}"]
    \ar[from=g,to=g2,phantom,"\Downarrow\scriptstyle\cong"]
  \end{tikzcd}
  \hspace{-2cm}
  \end{equation*}
  \caption{A 2-cell compatibility condition for $\dchu(\sK)$}
  \label{fig:2-cell-compat}
\end{figure}
  % \begin{equation}\label{eq:2-cell-compat}
  % % \hspace{-2cm}
  % \begin{tikzcd}[column sep=huge,row sep=large]
  %   (A_1\p , \dots, A_m\p , B_1\m, \dots, B_n\m) \ar[r,"{\uA_1\circ (1,f\m_1)}"{name=f}]
  %   \ar[d,shift left=20] \ar[d,shift right=20]
  %   \ar[d,phantom,"{\scriptstyle\cdots(u_1\p , \dots, u_m\p , v_1\m, \dots, v_n\m) \cdots}"]
  %   % \ar[dr,phantom,"\Downarrow\scriptstyle\mu"]
  %   &
  %   \Omega \ar[d,equals] \\
  %   (C_1\p , \dots, C_m\p , D_1\m, \dots, D_n\m)
  %   \ar[r,"{\uC_1\circ (1,g\m_1)}"{name=g},shift left]
  %   \ar[r,"{\uC_2\circ (1,g\m_2)}"'{name=g2},shift right=4]
  %   &
  %   \Omega
  %   \ar[from=f,to=g,phantom,"{\Downarrow\scriptstyle\uu_1\circ (1,\mu_1\m)}"]
  %   \ar[from=g,to=g2,phantom,"\Downarrow\scriptstyle\cong"]
  % \end{tikzcd}
  % \quad=\quad
  % \begin{tikzcd}[column sep=huge,row sep=large]
  %   (A_1\p , \dots, A_m\p , B_1\m, \dots, B_n\m)
  %   \ar[r,"{\uA_1\circ (1,f\m_1)}"{name=f2},shift left=4]
  %   \ar[r,"{\uA_2\circ (1,f\m_2)}"'{name=f},shift right]
  %   \ar[d,shift left=20] \ar[d,shift right=20]
  %   \ar[d,phantom,"{\scriptstyle\cdots(u_1\p , \dots, u_m\p , v_1\m, \dots, v_n\m) \cdots}"]
  %   % \ar[dr,phantom,"\Downarrow\scriptstyle\mu"]
  %   &
  %   \Omega \ar[d,equals] \\
  %   (C_1\p , \dots, C_m\p , D_1\m, \dots, D_n\m)
  %   \ar[r,"{\uC_2\circ (1,g\m_2)}"'{name=g}]
  %   &
  %   \Omega
  %   \ar[from=f,to=g,phantom,"{\Downarrow\scriptstyle\uu_2\circ (1,\mu_2\m)}"]
  %   \ar[from=f2,to=f,phantom,"\Downarrow\scriptstyle\cong"]
  % \end{tikzcd}
  % % \hspace{-2cm}
  % \end{equation}
The 2-cells $\mu_j\p,\mu_i\m$ come from co-unary 2-cells in $\dQ'(\sC,\Omega)$, while the ${n+m \choose 2}$ commutativity conditions are a ``clique of commutative cylinders'' therein.
\end{itemize}

We can now quite easily define:

\begin{defn}
  The \textbf{2-Chu construction} of $(\sC, \Omega)$ is the 2-polycategory $\tchu(\sC,\Omega)$ obtained by discarding all the non-identity vertical arrows in $\dchu(\sC,\Omega)$.
\end{defn}

The 1-categorical Chu construction is usually described as a
$\ast$-autonomous category, under suitable conditions on the input
category (closed monoidal with pullbacks).  Our 2-categorical version
has no such conditions on the input, so it produces only a
2-polycategory.  In the presence of suitable structure we expect it to
be a ``$\ast$-autonomous 2-category'', but in order to prove this we
need to define the latter term.  Defining it as a particular kind of
monoidal 2-category would result in numerous tedious coherence axioms,
so instead we take the expected polycategorical characterization as a
definition.

\begin{defn}
  We say that a 2-polycategory $\P$ has \textbf{bicategorical} tensor
  products, units, cotensor products, and counits if they induce
  \emph{equivalences} of hom-categories:
  \begin{align*}
    \P(\Gamma,A\tens B;\Delta) &\simeq \P(\Gamma,A,B;\Delta)\\
    \P(\Gamma,\top;\Delta) &\simeq \P(\Gamma;\Delta) \\
    \P(\Gamma;A\cotens B,\Delta) &\simeq \P(\Gamma;A,B,\Delta) \\
    \P(\Gamma;\bot,\Delta) &\simeq \P(\Gamma;\Delta).
  \end{align*}
  If a $\top$ or $\bot$ only satisfies this property when
  $\abs{\Gamma}+\abs{\Delta}>0$, we call it a \textbf{positive}
  bicategorical unit or counit.  We say that $\P$ has
  \textbf{bicategorical duals} if for any $A$ there are morphisms
  $\eta: () \to (A,A\d)$ and $\ep: (A\d,A)\to ()$ with isomorphisms
  $\ep \circ_A \eta \cong 1_{A\d}$ and
  $\ep\circ_{A\d} \eta \cong 1_A$.\footnote{For a coherent notion of
    duality, these isomorphisms should also satisfy axioms; but we
    will not worry about that, since in a 2-Chu construction these
    isomorphisms are in fact equalities.}
\end{defn}

The positivity condition on units and counits is because our
definition of the $(0,0)$-ary morphisms is ``wrong'', as noted in
\cref{rmk:00}.

\begin{thm}
  If \sC is a 2-multicategory with bicategorical tensor products,
  unit, and homs, and also has bipullbacks\footnote{I.e.\
    bicategorical pullbacks, whose universal property is an
    equivalence of hom-categories.}, and $\Omega$ is an object of \sC,
  then $\tchu(\sC,\Omega)$ has bicategorical tensor products, cotensor
  products, positive unit and counit, and duals.
\end{thm}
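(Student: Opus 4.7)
The plan is to follow Theorems~\ref{thm:2tens} and~\ref{thm:chu}, with two systematic modifications: strict equalities become cliques of isomorphisms, and strict pullbacks become bipullbacks. I would dispatch the duals first: set $A\d = (A\m, A\p, \uA\sigma)$ with unit and counit having trivial primary components and identity cliques, exactly as in Theorem~\ref{thm:2tens}(1); the triangle identities hold on the nose, giving strict (hence bicategorical) duals. For the positive unit, I would take $\top\p = I$ and $\top\m = \Omega$ with $\ul{\top}$ the canonical map obtained from $[I,\Omega] \simeq \Omega$, and define $\bot$ dually. Given a polyarrow $(\Gamma, \top) \to \Delta$ with $\abs\Gamma + \abs\Delta > 0$, the bicategorical universal property of $I$ in $\sC$ absorbs the $\top\p$-component, while the component into $\top\m = \Omega$ is absorbed into the clique of the remaining polyarrow; this last step requires at least one other object present to anchor the clique, which is exactly why the positivity hypothesis is needed.

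For tensors, I would follow Theorem~\ref{thm:chu} but with a bipullback in place of the strict pullback:
\[ (A\tens B)\p = A\p \otimes B\p, \qquad (A\tens B)\m = [A\p, B\m] \times^{\mathrm{ps}}_{[A\p\otimes B\p, \Omega]} [B\p, A\m], \]
with $\ul{A\tens B}$ induced by the bicategorical universal property of $\otimes$ from the map out of the bipullback into $[A\p\otimes B\p,\Omega]$. Whereas Theorem~\ref{thm:chu} used a strict pullback to encode the equality $\upsi \circ h' = \uphi \circ h''$, the bipullback now encodes a coherent \emph{iso\-morphism} between these two composites --- which is exactly the data carried by the clique of a polyarrow in $\tchu(\sC,\Omega)$. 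Cotensor products are defined dually.

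The main work will be verifying that the natural functor
\[ \tchu(\sC,\Omega)(\Gamma, A\tens B; \Delta) \longrightarrow \tchu(\sC,\Omega)(\Gamma, A, B; \Delta) \]
is an \emph{equivalence} of hom-categories, not merely a bijection on isomorphism classes. On objects, the argument follows the template of Theorem~\ref{thm:chu}: the bicategorical universal properties of $\otimes$, internal-homs, and the bipullback together split a polyarrow out of $A\tens B$ into the data of a polyarrow out of $(A,B)$, and the clique containing $\ul{A\tens B}$ unfolds into the clique required for the expanded polyarrow. On 2-cells, the 2-dimensional universal properties of these same constructions yield essential surjectivity and full faithfulness. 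The hard part will be the bookkeeping of the clique through the bipullback: a clique is a coherent family of $\binom{n+m}{2}$ isomorphisms, and the bipullback replaces one edge of this family with a universal factorization which must be shown to reassemble correctly with the remaining clique edges when we unfold through the $\otimes$ and hom universal properties. Once these 2-categorical universal properties are tracked carefully, no essentially new ideas beyond those used in Theorems~\ref{thm:2tens} and~\ref{thm:chu} should be required.
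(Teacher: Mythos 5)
Your proposal is correct and follows essentially the same route as the paper: duals are strict and inherited as in Theorem~\ref{thm:2tens}, the unit is $(I,\Omega,\ul{\top})$ with positivity failing only at $(0,0)$-arity, and the tensor is built exactly as in Theorem~\ref{thm:chu} with the strict pullback replaced by a bipullback whose coherence isomorphism supplies precisely the missing edge $\uA\circ(1,h'')\cong\uB\circ(1,h')$ of the clique. The paper's proof carries out the same bookkeeping you describe, verifying an equivalence of hom-categories via the 2-dimensional universal properties of $\otimes$, the internal homs, and the bipullback.
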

\begin{proof}
  As in \cref{thm:chu}, the tensor product of $(A \p,A \m,\uA)$ and $( B\p, B\m,\uB)$ is
  \begin{align*}
    (A \tens B)\p &= (A \p \otimes  B\p)\\
    (A \tens B)\m &= [A \p,  B\m] \times^b_{[A \p\otimes B\p, \Omega]} [ B\p, A \m]
  \end{align*}
  where $\times^b$ denotes the bipullback.  To define
  $\ul{A \tens B}$, we note that now the following two morphisms are
  \emph{isomorphic}
  \begin{equation}
  \begin{array}[c]{c}\scriptstyle
    ([A \p, B\m] \times^b_{[A \p\otimes B\p, \Omega]} [ B\p,A \m], A \p, B\p)
    \to ([A \p, B\m], A \p, B\p)
    \to ( B\m, B\p)
    \xto{\uB} \Omega\\
    \scriptstyle ([A \p, B\m] \times^b_{[A \p\otimes B\p, \Omega]} [ B\p,A \m], A \p, B\p)
    \to ([ B\p,A \m], A \p, B\p)
    \to (A \m,A \p)
    \xto{\uA} \Omega
  \end{array}\label{eq:2tenschu-2}
  \end{equation}
  and determine
  $\ul{A \tens B} : ([A \p, B\m] \times [ B\p,A \m], A \p\otimes B\p)
  \to \Omega$, up to isomorphism, by the universal property of
  $A \p\otimes B\p$.  Its universal morphism
  $(A , B) \to (A \otimes B)$
  \begin{align*}
    (A \p,  B\p) &\to (A \p \otimes  B\p)\\
    ([A \p,  B\m] \times_{[A \p\otimes B\p, \Omega]} [ B\p, A \m], A \p) &\to ([A \p,  B\m],A \p) \to  B\m\\
    ([A \p,  B\m] \times_{[A \p\otimes B\p, \Omega]} [ B\p, A \m],  B\p) &\to ([ B\p, A \m], B\p) \to A \m
  \end{align*}
  plus the isomorphism between the two maps in~\eqref{eq:2tenschu-2}
  and the defining isomorphism of $\ul{A\tens B}$.  For the universal
  property, a morphism $(A \tens B, C) \to D$ in $\tchu(\sC,\Omega)$
  now consists of morphisms in $\sK$:
  \begin{align*}
    f &: (A \p\otimes B\p, C\p) \to D\p\\
    g &: (A \p\otimes B\p,D\m) \to C\m\\
    h &: (C\p,D\m) \to [A \p,  B\m] \times^b_{[A \p\otimes B\p, \Omega]} [ B\p, A \m]
  \end{align*}
  together with a clique on
  \begin{equation}
    \ul{A \tens B} \circ (1,h) \qquad \uC \circ (1,g) \qquad \uD \circ (f,1).\label{eq:2tens-eq-2}
  \end{equation}
  Composing with the universal morphism implements the universal
  properties of $A \p\otimes B\p$ and $[A \p, B\m]$ and
  $[ B\p, A \m]$, yielding an equivalence to the category of
  quadruples
  \begin{alignat*}{2}
    f' &: (A \p, B\p, C\p) \to D\p &\qquad
    h' &: (A \p,C\p,D\m) \to  B\m\\
    g' &: (A \p, B\p,D\m) \to C\m &
    h'' &: ( B\p,C\p,D\m) \to A \m
  \end{alignat*}
  together with a clique corresponding to~\eqref{eq:2tenschu-2}, plus
  an additional \emph{isomorphism} between $\uA\circ (1,h'')$ and
  $\uB\circ (1,h')$ coming from the bipullback.  This yields the
  desired clique on
  \[ \big(\uA\circ (1,h''),\, \uB\circ (1,h'),\, \uC\circ (1,g'),\, \uD\circ (f',1)\big),
  \]
  and hence the desired morphism $(A,B,C)\to D$.  The general case is
  analogous, as is the cotensor product.

  As before, we define the unit by $\top\p = I$ and $\top\m = \Omega$,
  with $\ul{\top} : (I,\Omega) \to \Omega$ induced by the universal
  property of $I$.  Its universal property is straightforward to
  check; the case of $(0,0)$-ary morphisms fails because morphisms
  $\top \to ()$ in $\tchu(\sC,\Omega)$ are equivalent to morphisms
  $I\to \Omega$ in \sC, whereas there is only one morphism $()\to ()$
  in $\tchu(\sC,\Omega)$.
\end{proof}

\begin{rmk}
  If (as in $\cat$) the tensor products, units, and homs in $\sK$
  satisfy a strict universal property, and the bipullbacks are strict
  \emph{iso-comma objects} (not strict pullbacks!), then the tensor
  and cotensor products in $\tchu(\sC,\Omega)$ are again strict.  But
  the unit and counit of $\tchu(\sC,\Omega)$ are not strict even in
  this case.
\end{rmk}

\begin{rmk}
  When we construct a monoidal 2-category from a 2-polycategory, the
  positivity condition should be irrelevant.  That is, once given a
  definition of ``$\ast$-autonomous 2-category'' as a monoidal
  2-category with extra structure, any 2-polycategory with
  bicategorical tensors, cotensors, and duals and \emph{positive}
  bicategorical unit and counit should still have an underlying
  $\ast$-autonomous 2-category.  Moreover, this should give the
  correct ``monoidal'' version of $\tchu(\sC,\Omega)$, despite our
  incorrect definition of the (0,0)-ary morphisms in the
  polycategorical version.
\end{rmk}

Our primary interest is in the case $\sK=\cat$ and $\Omega=\Set$.  In
\cref{sec:cyclic} we will show that $\dchu(\cat,\Set)$ contains the
cyclic multi double category $\madj$ of multivariable adjunctions, by
restricting to the ``representable''\footnote{Another name might be
  ``discrete'', since these are analogous to sets regarded as
  ``discrete Chu spaces'' in $\chu(\Set,2)$.} objects
$\rep{\cA} = (\cA,\cA\op,\hom_\cA)$.  Here we instead mention a few
applications of the full structure $\dchu(\cat,\Set)$.

\begin{eg}\label{eg:polcat}
  Any (poly) double category has an underlying \emph{vertical
    2-category} consisting of the objects, vertical arrows, and
  2-cells whose vertical source and target are identity horizontal
  arrows.  The vertical 2-category of $\dchu(\cat,\Set)$ is isomorphic
  to the 2-category \textsf{PolCat} of \textbf{polarized categories}
  from~\cite{cs:polarized}.  (Since an object of \textsf{PolCat} is by
  definition two categories with a profunctor between them, i.e.\ a
  functor $\mathbf{X}_o\op \times \mathbf{X}_p \to \Set$, this
  isomorphism has to dualize one of the categories.)  The term
  ``polarized'' comes from a logical perspective, with $\cA\p$ and
  $\cA\m$ as the ``positive'' and ``negative'' types that can occur on
  the left or right sides of a sequent\footnote{As noted in
    \cref{rmk:duality}, in most of the paper we consider
    (multivariable) adjunctions to point in the direction of their
    \emph{right} adjoints.  But in \cref{eg:polcat}, \cref{eg:poladj},
    and \cref{eg:conjoints} it is more natural to orient them in the
    other direction.}, and the elements of $\underline{\cA}(A,B)$ as
  the set of sequents $A \vdash B$.
\end{eg}

\begin{eg}\label{eg:poladj}
  The horizontal morphisms of $\dchu (\cat,\Set)$ are not the same as
  the ``inner/outer adjoints'' of~\cite{cs:polarized}, but they are a
  different sensible notion of ``(multivariable) adjunction'' for
  polarized categories.  For instance, just as a horizontal
  pseudomonoid in $\madj$ is a closed monoidal category, a horizontal
  pseudomonoid in $\dchu(\cat,\Set)$ is a natural notion of ``closed
  monoidal polarized category'': it has a tensor product
  $\otimes :\cA\p\times \cA\p \to \cA\p$ and internal-homs
  $\multimap : \cA\p \times \cA\m \to \cA\m$ and
  $\multimapinv : \cA\m \times \cA\p \to \cA\p$ with natural
  bijections between sequents
  \[
    \begin{array}{rcl}
      A_1\otimes A_2 & \vdash &B\\\hline
      A_1 & \vdash &A_2\multimap B\\\hline
      A_2 & \vdash &B\multimapinv A_1.
    \end{array}
  \]
  This allows us to take any of the above sets as a \emph{definition}
  of a set of sequents $A_1,A_2 \vdash B$.  We also have coherent
  associativity isomorphisms of all sorts --- not just
  $(A_1\otimes A_2)\otimes A_3 \cong A_1 \otimes (A_2\otimes A_3)$ but
  also
  $A_1 \multimap (A_2 \multimap B) \cong (A_1\otimes A_2)\multimap B$
  etc.\ (in the polarized case none of these is determined by the
  others) --- giving a consistent definition of a set of sequents
  $A_1,A_2,A_3\vdash B$, and so on for higher arity as well.  (The
  fact that closed monoidal categories are particular pseudomonoids in
  $\dchu(\cat,\Set)$ was observed by~\cite{garner:clmon-2chu}.)
  Similarly, just as it can be shown that a Frobenius pseudomonoid in
  $\madj$ is a $\ast$-autonomous
  category~\cite{ds:quantum,street:frob-psmon,egger:frob-lindist,shulman:frobmvar},
  a Frobenius pseudomonoid in $\dchu(\cat,\Set)$ is a
  ``$\ast$-autonomous polarized category'', with an additional
  ``co-closed monoidal structure'' $\parr$ allowing a consistent
  definition of $A_1,\dots,A_m \vdash B_1,\dots, B_n$ in terms of
  $A_1\otimes \dots\otimes A_m \vdash B_1\parr \dots\parr B_n$.
\end{eg}

\begin{eg}\label{eg:conjoints}
  Intuitively, a polarized category should have ``binary products'' if
  its diagonal functor $A\to A\times A$ has a ``right adjoint''.
  However, as noted in~\cite{cs:polarized}, right adjoints in the
  vertical 2-category \textsf{PolCat} are not the correct notion.  The
  inner/outer adjoints of~\cite{cs:polarized} give one possible
  solution, but the double category $\dchu(\cat,\Set)$ gives another.
  The diagonal $A\to A\times A$ only exists as a \emph{vertical} arrow
  in this double category, but~\cite{gp:double-adjoints} have defined
  a notion of ``adjunction'' between a vertical arrow and a horizontal
  arrow in a double category, called a \textbf{conjunction}.

  In our case, for $A,B\in \dchu(\cat,\Set)$, a ``right conjoint'' of
  a vertical arrow $u : A\to B$ with components $u\p : A\p\to B\p$ and
  $u\m : A\m \to B\m$ consists essentially of an ordinary right
  adjoint $f\m$ to $u\m$ together with a compatible bijection between
  sequents $u\p(\Gamma) \vdash \Delta$ and
  $\Gamma \vdash f\m(\Delta)$.
%   A right conjoint of $u$ is a horizontal arrow $f:B\to A$, with components $f\p:B\p \to A\p$ and $f\m: A\m\to B\m$ and $\uA(a\m,f\p(b\p)) \cong \uB(f\m(a\m),b\p)$, together with
%   \begin{enumerate}
%   \item an isomorphism $f\m \cong u\m$, and
%   \item an adjunction $u\p \dashv f\p$, such that
%   \item the following squares commute:
%     \[
%       \begin{tikzcd}
%         \uA(a\m,a\p) \ar[r,"\eta"] \ar[d,"u"'] &
%         \uA(a\m,f\p(u\p(a\p))) \ar[d,"\cong"]  \\
%         \uB(u\m(a\m),u\p(a\p)) \ar[r,"\cong"'] &
%         \uA(f\m(a\m),u\p(a\p))
%       \end{tikzcd}
% \]\[
%       \begin{tikzcd}
%         \uA(a\m,f\p(b\p)) \ar[r,"\cong"] \ar[d,"u"'] &
%         \uB(f\m(a\m),b\p) \ar[d,"\cong"] \\
%         \uB(u\m(a\m),u\p(f\p(b\p))) \ar[r,"\ep"] & \uB(u\m(a\m),b\p)
%       \end{tikzcd}
%     \]
%   \end{enumerate}
%   If we identify $f\m$ with $u\m$ along the given isomorphism, then such a conjoint consists of an ordinary right adjoint $f\p$ of $u\p$ equipped with an additional ``polarized adjunction isomorphism''$\uA(a\m,f\p(b\p)) \cong \uB(u\m(a\m),b\p)$ that is compatible in a suitable sense.
  In the case when $u:A\to A\times A$ is the diagonal, this means that $A\m$ has binary products in the ordinary sense, and we also have a compatible natural bijection between sequents $\Gamma \vdash \Delta_1\times \Delta_2$ and pairs of sequents $\Gamma\vdash\Delta_1$ and $\Gamma\vdash\Delta_2$.
\end{eg}

\begin{eg}
  Let $k:\cA\to\cB$ be a functor, and write $\rep{\cB_k}$ for the
  object $(\cB,\cA\op,\ul{\cB_k})\in\dchu(\cat,\Set)$ where
  $\ul{\cB_k}(a,b) = \cB(k(a),b)$.  Then a horizontal morphism
  $\rep{\cC}\to \rep{\cB_k}$, with $\rep{\cC} = (\cC,\cC\op,\hom_\cC)$
  representable, is known as a \textbf{relative adjunction}: a pair of
  functors $f:\cA\to\cC$ and $g:\cC\to\cB$ with a natural isomorphism
  $\cC(f(a),b) \cong \cB(k(a),g(b))$.
\end{eg}

\begin{eg}
  For any category $\cA$, we have a ``maximal'' object
  $\floor{\cA} = (\cA,\Set^{\cA},\ev)$ of $\dchu(\cat,\Set)$.  A
  horizontal morphism $\floor{\cA} \to \floor{\cB}$ is just a functor
  $\cA\to\cB$, and similarly a two-variable morphism
  $(\floor{\cA},\floor{\cB})\to\floor{\cC}$ is just a two-variable
  functor $\cA\times\cB\to\cC$.

  If $\cA$ has finite limits, we also have
  $\floorlex{\cA} = (\cA,\Lex(\cA,\Set),\ev)$, where $\Lex(\cA,\Set)$
  denotes the category of finite-limit-preserving functors.  Then a
  horizontal morphism $\floorlex{\cA} \to\floorlex{\cB}$ is equivalent
  to a finite-limit-preserving functor $\cA\to\cB$, but also to a
  finitary right adjoint $\Lex(\cB,\Set) \to \Lex(\cA,\Set)$.  This is
  essentially Gabriel--Ulmer duality~\cite{gu:locpres} for locally
  finitely presentable categories, and generalizes to many other
  doctrines of limits (the maximal case $\floor{\cA}$ corresponds to
  the empty doctrine of no limits).  A two-variable morphism
  $(\floorlex{\cA},\floorlex{\cB})\to\floorlex{\cC}$ is a two-variable
  functor $\cA\times\cB\to\cC$ that preserves finite limits in each
  variable separately.

  Thus, just as the 1-Chu construction gives abstract homes for
  1-categorical concrete dualities like Stone duality and Pontryagin
  duality, the 2-Chu construction gives abstract homes for
  2-categorical concrete dualities like Gabriel--Ulmer
  duality~\cite{pbb:cat-duality}.
\end{eg}

\begin{eg}
  In~\cite{avery:thesis}, objects of the 1-Chu construction
  $\chu(\mathsf{Cat},\cC)$, for an arbitrary category $\cC$, are
  called \textbf{aritations}.  In \S4.7 thereof a structure-semantics
  adjunction is phrased in terms of a universal morphism in
  $\chu(\mathsf{Cat},\cC)$, and in Chapter 5 our
  $\rep\cB \in\chu(\mathsf{Cat},\Set)$ is called the \textbf{canonical
    aritation}.  The possibility of the weaker notion of morphism in
  the 2-Chu construction $\tchu(\cat,\cC)$ (reducing to adjunctions
  between canonical aritations with $\cC=\Set$) is considered in
  \S11.1 of~\cite{avery:thesis}.
\end{eg}

\begin{rmk}
  We can also ``iterate'' the Chu construction in various ways.  For
  instance, from the 2-polycategory $\tmadj$ we can construct
  $\dchu(\tcat,\tmadj)$.  Since the objects of $\tmadj$ are
  categories, every 2-category with its hom-functor yields a
  representable object of $\dchu(\tcat,\tmadj)$.  A horizontal
  morphism between such objects consists of functors $f\p:\cA\to\cB$
  and $f\m:\cB\to\cA$ with an \emph{adjunction}
  $\cA(f\m(b),a) \toot \cB(b,f\p(a))$.  Such \textbf{local
    adjunctions} were studied by~\cite{bp:local-adjointness} in the
  more general context of bicategories and (op)lax functors.
\end{rmk}

\begin{rmk}
  The method of categorifying a construction by applying a
  pullback-preserving functor to internal categories in its domain can
  also be applied to the general construction $\adjcom$ from
  \cref{sec:2cd}.  I do not know whether there are interesting
  examples of ``double Dialectica constructions''.
\end{rmk}

\section{Cyclic multicategories and parametrized mates}
\label{sec:cyclic}

Finally, as promised in \cref{sec:intro-mvar}, we can define the poly
double category of multivariable adjunctions as a subcategory of
$\dchu(\cat,\Set)$.

\begin{defn}\label{defn:madj}
  The poly double category $\madj$ is the sub-double-polycategory of $\dchu(\cat,\Set)$ determined by:
  \begin{itemize}
  \item The objects of the form $\rep\cA = (\cA,\cA\op,\hom_\cA)$ for a category $\cA$.
  \item The vertical arrows of the form $\rep f = (f,f\op,\hom_f)$ for a functor $f:\cA\to\cB$.
  \item All the horizontal arrows and 2-cells relating these.
  \end{itemize}
\end{defn}

We want to compare this with the cyclic multi double category of
multivariable adjunctions from~\cite{cgr:cyclic}.  This requires
making precise the relationship between polycategories and cyclic
multicategories; as suggested in \cref{sec:introduction}, we will show
that cyclic symmetric multicategories are almost equivalent to
polycategories with strict duals.  In fact, there are multiple ways of
defining each of these notions, which we compare with the following
omnibus definition.

\begin{defn}\label{defn:stpoly}
  Let $X\subseteq \dN\times \dN$.
  A \textbf{(symmetric) $X$-ary $\ast$-polycategory} \cP consists of the following:
  \begin{itemize}
  \item A set of objects equipped with a strict involution $(-)\d$, so that $(A\d)\d=A$ strictly.
    If $\Gamma$ is a list of objects, we write $\Gamma\d$ for applying $(-)\d$ to each object in $\Gamma$.
  \item For each pair $(\Gamma, \Delta)$ of finite lists of objects such that $(\abs\Gamma,\abs\Delta)\in X$, a set $\P(\Gamma; \Delta)$ of polyarrows.
  \item For any $\Gamma,\Lambda,\Delta,\Sigma$ and any isomorphism of lists (i.e.\ object-preserving permutation) $\si:\Gamma,\Delta\d \toiso \Lambda,\Sigma\d$, an action
    \( (-)^\sigma : \cP(\Gamma;\Delta) \toiso \cP(\Lambda;\Sigma) \),
    whenever both hom-sets exist, which is functorial on composition of permutations.
  \item Each object $A$ has identities $1^m_A \in \P(A; A)$, $1^l_A \in \P(A\d,A;)$, and/or $1^r_A \in \P(;A,A\d)$, each existing whenever the relevant hom-set does.
    Moreover, any two of these that exist simultaneously are each other's image under the relevant permutation.
  \item For finite lists of objects $\Gamma, \Delta, \Lambda, \Sigma$, and object $A$, composition maps
    \begin{alignat*}{2}
      \circ^m_A &: \P(\Lambda_1, A,\Lambda_2; \Sigma) \times \P(\Gamma; \Delta_1, A,\Delta_2) &&\to \P(\Lambda_1, \Gamma,\Lambda_2 ; \Delta_1, \Sigma,\Delta_2)\\
      \circ^l_A &: \P(\Lambda_1, A,\Lambda_2; \Sigma) \times \P(\Gamma_1,A\d,\Gamma_2; \Delta) &&\to \P(\Lambda_1,\Lambda_2, \Gamma_1,\Gamma_2 ; \Delta, \Sigma)\\
      \circ^r_A &: \P(\Lambda; \Sigma_1,A\d,\Sigma_2) \times \P(\Gamma; \Delta_1, A,\Delta_2) &&\to \P(\Lambda, \Gamma ; \Delta_1,\Delta_2, \Sigma_1,\Sigma_2)
    \end{alignat*}
    in each case presuming that all three hom-sets exist.
    Moreover, any two of these that exist simultaneously are each other's image under the relevant permutations, as are the corresponding composites along $A\d$; in other words any two of the composites
    \begin{mathpar}
      g \circ^m_A f \and g^\rho \circ^r_A f \and g \circ^l_A f^\sigma \and
      (f^\sigma \circ^m_{A\d} g^\rho)^\tau \and (f \circ^r_{A\d} g^\rho)^\tau \and (f^\sigma \circ^l_{A\d} g)^\tau
    \end{mathpar}
    that exist are equal.
  \item Axioms of associativity and equivariance for all choices of $i,j \in \{m,l,r\}$ and whenever both sides make sense and the permutations make everything well-typed:
    \begin{align*}
      1^i_A \circ^i_A f &= f \\
      f \circ^i_A 1^i_A &= f \\
      (h \circ^i_B g) \circ^j_A f &= h \circ^i_B (g \circ^j_A f) \\
      (h \circ^i_B g) \circ^j_A f &= ((h \circ^j_A f) \circ^i_B g)^\sigma \\
      h \circ^i_B (g \circ^j_A f) &= (g \circ^i_A (h\circ^i_B f))^\sigma \\
      g^\rho \circ^i_A f^\sigma &= (g\circ^i_A f)^\tau 
    \end{align*}
  \end{itemize}
  We write $\stpoly X$ for the category of $X$-ary $\ast$-polycategories.
\end{defn}

\begin{eg}
  A $\{(1,1)\}$-ary $\ast$-polycategory is just an ordinary category
  equipped with a strict contravariant involution, since none of the
  $l$ or $r$ data exists.  Even more trivially, a $\{(0,0)\}$-ary
  $\ast$-polycategory is just a set $\cP\zzz$, with no operations.
\end{eg}

\begin{defn}\label{defn:csm}
  We define a \textbf{cyclic symmetric multicategory} to be a
  ``co-unary $\ast$-polycategory'', i.e.\ an $(\dN \times \{1\})$-ary
  one.  To see that this is sensible, note firstly that it ensures
  that none of the $l$ and $r$ data exist.  Thus a
  $(\dN \times \{1\})$-ary $\ast$-polycategory is just a symmetric
  multicategory with a strict involution on its objects and an
  extended action on the homsets $\cP(A_1,\dots,A_n;B)$ indexed by the
  symmetric group $S_{n+1}$.  But $S_{n+1}$ is generated by its two
  subgroups $S_n$ (permuting the first $n$ objects $A_1,\dots,A_n$)
  and $C_{n+1}$ (the cyclic group of order $n+1$, permuting the
  objects cyclically).  The resulting action of $S_n$ is just that of
  a symmetric multicategory, while the action of $C_{n+1}$ says that
  the underlying non-symmetric multicategory of $\cP$ is a
  \emph{cyclic} multicategory in the sense of~\cite{cgr:cyclic}, and
  the relations in $S_{n+1}$ between these subgroups say that the
  symmetric and cyclic structure are compatible in a natural way.
\end{defn}

\begin{eg}
  If $X=\dN\times \dN$, then all the composites and identities exist,
  and each pair of operations $1^i_A$ and $\circ^i_A$ uniquely
  determine the others.  In particular, if we look at $1^m_A$ and
  $\circ^m_A$, we see that an $(\dN\times \dN)$-ary
  $\ast$-polycategory reduces to an ordinary
  \textbf{$\ast$-polycategory} as defined
  in~\cite[\S5.3]{hyland:pfthy-abs}; for emphasis we may call it a
  \textbf{bi-infinitary} $\ast$-polycategory.
\end{eg}

\begin{rmk}
  Note that in a bi-infinitary $\ast$-polycategory, $A\d$ is indeed a
  dual of $A$: the identities $1^l_A$ and $1^r_A$ supply the unit and
  counit of the duality.  Conversely, any polycategory equipped with
  ``strictly involutive duals'' can be made into a
  $\ast$-polycategory.
\end{rmk}

\begin{eg}
  If $X=\dN\times \{0\}$, then \emph{only} $1^l_A$ and $\circ^l_A$
  exist.  Thus an $(\dN\times \{0\})$-ary $\ast$-polycategory may be
  called an \textbf{entries-only $\ast$-polycategory}, by analogy with
  ``entries-only'' cyclic operads (which are the positive-ary
  one-object case) --- since there is no codomain, the objects in the
  domain are simply called ``entries''.
  % This may require updating when the final version of~\cite{dh:dk-cyc-opd} is published.
  In~\cite{dh:dk-cyc-opd-v1}, entries-only $\ast$-polycategories are called ``colored cyclic operads'', but I prefer the terminology of~\cite{gk:cyclic-operads,cgr:cyclic,hry:higher-cyc-opd} whereby ``cyclic multicategories'' and ``cyclic operads'' can be regarded as ordinary multicategories or operads equipped only with the structure of an involution and a compatible cyclic action, rather than additionally with the stuff of an extra hom-set $\cP\zzz$.
\end{eg}

\begin{eg}
  With $X=\{0,1,\dots,n\}\times \{1\}$ (or
  $X=\{0,1,\dots,n+1\}\times \{0\}$ for the entries-only version) we
  obtain \textbf{$n$-truncated} cyclic symmetric multicategories,
  which include $n$-truncated cyclic operads (for truncated operads
  see e.g.~\cite{snpr:modsp-fopd}).
\end{eg}

Of course, if $Y\subseteq X$ we have a functor
$U^X_Y : \stpoly X \to \stpoly Y$ that forgets the morphisms with
undesired arities and the operations relating to them.  As we will now
see, these functors often do not forget very much.

Given a fixed set \cO of objects, let $\iSeq_{X}(\cO)$ be the groupoid
whose objects are pairs $(\Gamma;\Delta)$ of finite lists of elements
of \cO with $(\abs{\Gamma},\abs{\Delta})\in X$, and whose morphisms
are isomorphisms $\Gamma\d,\Delta \toiso \Lambda\d,\Sigma$.  An
inclusion $Y\subseteq X$ yields a fully faithful inclusion
$\iSeq_{Y}(\cO) \into \iSeq_{X}(\cO)$.  By an \textbf{$X$-ary
  collection over \cO} we mean a functor
$\cP : \iSeq_{X}(\cO) \to \Set$; thus an $X$-ary $\ast$-polycategory
consists of an $X$-ary collection over a set of objects together with
identities and composition operations.

\begin{thm}\label{thm:stpoly-eqv}
  If $Y\subseteq X \subseteq \dN\times \dN$ and for any $(m,n)\in X$
  there exists $(k,\ell)\in Y$ such that $k+\ell = m+n$, then the
  forgetful functor $U^X_Y : \stpoly X \to \stpoly Y$ is an
  equivalence.
\end{thm}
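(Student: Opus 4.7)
My plan is to construct an explicit quasi-inverse $E : \stpoly Y \to \stpoly X$ to the forgetful functor $U = U^X_Y$. The guiding observation is that for any fixed object set $\cO$, the arity hypothesis makes the inclusion of groupoids $\iSeq_Y(\cO) \hookrightarrow \iSeq_X(\cO)$ an equivalence. Fullness and faithfulness are automatic, since in either groupoid a morphism is just an isomorphism of lists $\Gamma\d,\Delta \toiso \Lambda\d,\Sigma$. For essential surjectivity, given $(\Gamma;\Delta)$ with $(\abs\Gamma,\abs\Delta) = (m,n) \in X$, choose $(k,\ell)\in Y$ with $k+\ell = m+n$ and any partition of the combined list $\Gamma\d,\Delta$ into a prefix of length $k$ and a suffix of length $\ell$; the evident list isomorphism exhibits $(\Gamma;\Delta)$ as isomorphic in $\iSeq_X(\cO)$ to some $(\Lambda;\Sigma)$ of $Y$-arity.

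Using this, I define $E$ as follows. For a $Y$-ary $\ast$-polycategory $\cP$ with object set $\cO$, give $E(\cP)$ the same objects and involution, and choose once and for all, for each $(\Gamma;\Delta)$ of $X$-arity not already of $Y$-arity, an isomorphism $\sigma_{\Gamma;\Delta} : (\Gamma;\Delta) \toiso (\Lambda_{\Gamma;\Delta};\Sigma_{\Gamma;\Delta})$ in $\iSeq_X(\cO)$ landing in $Y$-arity; set $\sigma_{\Gamma;\Delta}=\id$ when $(\abs\Gamma,\abs\Delta)\in Y$. Define $E(\cP)(\Gamma;\Delta) := \cP(\Lambda_{\Gamma;\Delta};\Sigma_{\Gamma;\Delta})$. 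The symmetric action on $E(\cP)$ is then uniquely determined by demanding naturality of the $\sigma$'s. For the identities $1^i_A$ and compositions $\circ^i_A$ in $E(\cP)$, I transport along the chosen $\sigma$'s into $\cP$, where at least one of the three variants exists (the arity hypothesis applied to the set $\{(1,1),(2,0),(0,2)\}$ for identities, and to the arity of the composite for compositions), perform the operation there, and transport back. The axioms in \cref{defn:stpoly} requiring that coexisting variants of identities and composites be each other's images under the symmetric action guarantee that this definition is independent of which variant one uses.

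The verification that $E(\cP)$ satisfies the associativity, unit, and equivariance axioms reduces, via the transport isomorphisms, to corresponding axioms in $\cP$. Each axiom instance involves only finitely many hom-sets, and pulling the whole equation back through the relevant $\sigma$'s yields a combination of $\cP$-axioms (using the coexistence identifications to normalize the variant $i\in\{m,l,r\}$ on each factor). Functoriality of $E$ in $\cP$ is immediate from the corresponding equivariance in the target. Finally, $U\circ E$ is literally the identity on $\stpoly Y$ by our choice $\sigma=\id$ on $Y$-ary arities, and $E\circ U \cong \Id_{\stpoly X}$ via the natural isomorphism whose component at $\cP$ is the identity on objects and is given on hom-sets by the chosen $\sigma$-transports; that these transports intertwine all the structure on both sides is again a direct consequence of equivariance and the coexistence axioms.

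The main obstacle is the bookkeeping in the axiom-verification step: there are many cases depending on which of $\circ^m,\circ^l,\circ^r$ is natively available for each factor of each composite, and similarly for the outer composite. Each case is handled uniformly by using the permutation action and the $\ast$-axiom identifying coexisting variants, so no single case is difficult, but laying out the normalization in enough generality to cover all twelve $(i,j)$-associativity axioms will be the most tedious part of a formal write-up.
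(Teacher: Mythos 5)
Your proposal is essentially the paper's own proof written out in more detail: both arguments rest on the observation that the inclusion $\iSeq_{Y}(\cO) \into \iSeq_{X}(\cO)$ is an equivalence of groupoids, so that the hom-sets, identities, and compositions of an $X$-ary $\ast$-polycategory are determined by, and can be reconstructed from, the $Y$-ary fragment by transport along chosen permutations, using the axioms that identify coexisting variants of $1^i_A$ and $\circ^i_A$. The one point both treat equally lightly is the assertion that every composition required in the $X$-ary extension admits at least one variant whose two inputs \emph{and} whose output all have $Y$-ary arities (your parenthetical ``the arity hypothesis applied to the arity of the composite'' matches the paper's ``any composition operation in $\cP$ is related by the corresponding permutation actions to one that exists in $U^X_Y\cP$''), so your write-up introduces no gap beyond what the paper already accepts.
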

\begin{proof}
  The assumption ensures that the corresponding inclusion
  $\iSeq_{Y}(\cO) \into \iSeq_{X}(\cO)$, for any set \cO, is
  essentially surjective and hence an equivalence.  For if
  $(\Gamma;\Delta) \in \iSeq_{X}(\cO)$, with say $\abs{\Gamma}=m$ and
  $\abs{\Delta}=n$ where $(m,n)\in X$, we can choose $(k,\ell)\in Y$
  as in the assumption and find an isomorphism
  $(\Gamma;\Delta)\cong (\Lambda;\Sigma)$ such that $\abs{\Lambda}=k$
  and $\abs{\Sigma}=\ell$, hence $(\Lambda;\Sigma) \in \iSeq_Y(\cO)$.
  Moreover, any hom-set in an $X$-ary $\ast$-polycategory \cP is
  isomorphic to one in $U^X_Y \cP$, and any composition operation in
  \cP is related by the corresponding permutation actions to one that
  exists in $U^X_Y \cP$.  Thus the structure of \cP is uniquely
  determined by that of $U^X_Y \cP$.

  Finally, given a $Y$-ary $\ast$-polycategory \cQ, its underlying
  $Y$-ary collection extends to an $X$-ary one, uniquely up to unique
  isomorphism, and we can use these same permutation actions to define
  the necessary identities and compositions for the latter to be an
  $X$-ary $\ast$-polycategory.  It is straightforward to check that
  the axioms are then satisfied.
\end{proof}

Let $\dN_{>0} = \{m\in\dN \mid m>0 \}$ and
$(\dN\times \dN)_{>0} = \{ (m,n) \in \dN\times\dN \mid m+n>0 \}$.

\begin{cor}
  In the following diagram of forgetful functors:
  \begin{equation}
    \begin{tikzcd}
      \stpoly{\dN \times \{0\}} \ar[d] &
      \stpoly{\dN\times \dN} \ar[l,"\sim"'] \ar[r,"\sim"] \ar[d] &
      \stpoly{\dN\times \{0,1\}} \ar[d] \\
      \stpoly{\dN_{>0}\times \{0\}} &
      \stpoly{(\dN\times \dN)_{>0}} \ar[l,"\sim"'] \ar[r,"\sim"] &
      \stpoly{\dN\times \{1\}}
    \end{tikzcd}\label{eq:stpoly-sqs}
  \end{equation}
  all the horizontal functors are equivalences.
\end{cor}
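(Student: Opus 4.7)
The plan is to derive the corollary as an immediate consequence of \cref{thm:stpoly-eqv}: for each of the four horizontal functors in~\eqref{eq:stpoly-sqs}, I will simply verify the arity hypothesis, namely that every $(m,n)$ in the source arity-set $X$ admits some $(k,\ell)$ in the target arity-set $Y$ with $k+\ell=m+n$.

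Concretely, I would proceed arrow by arrow. For the two arrows in the top row, where $X=\dN\times\dN$, the hypothesis is satisfied by choosing $(k,\ell)=(m+n,0)$, which lies in $\dN\times\{0\}\subseteq\dN\times\{0,1\}$; note that no positivity is needed since $m+n=0$ is permitted. For the bottom-left arrow, with $X=(\dN\times\dN)_{>0}$ and $Y=\dN_{>0}\times\{0\}$, the same choice $(k,\ell)=(m+n,0)$ works and now $k=m+n>0$ lies in $\dN_{>0}$. For the bottom-right arrow, with $Y=\dN\times\{1\}$, I would instead take $(k,\ell)=(m+n-1,1)$; this is legal precisely because $m+n\ge 1$, so $k=m+n-1\in\dN$.

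Having verified the hypothesis in all four cases, \cref{thm:stpoly-eqv} directly yields that each of these four forgetful functors is an equivalence of categories. There is no subtlety to handle beyond checking these arithmetic inequalities; the vertical functors in the diagram (which are merely asserted to exist, not to be equivalences) require no discussion, since the corollary makes no claim about them. In particular no step here is a real obstacle; the whole content of the corollary is already packaged into the previous theorem, and the corollary is essentially a bookkeeping statement about which inclusions of arity-sets satisfy the needed cofinality condition.
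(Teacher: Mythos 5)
Your proof is correct and is exactly the paper's argument: the paper's own proof consists of the single sentence that each inclusion satisfies the hypothesis of the preceding theorem, and you have merely spelled out the (correct) arithmetic verifications of that hypothesis for the four arrows.
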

\begin{proof}
  Each of these inclusions satisfies the hypothesis of \cref{thm:stpoly-eqv}.
\end{proof}

Thus, bi-infinitary $\ast$-polycategories are equivalent to
entries-only $\ast$-polycategories and also to co-subunary ones (i.e.\
$(\dN\times \{0,1\})$-ary ones).  The former equivalence is familiar
from the syntax of classical linear logic, which can be presented
either with two-sided sequents or one-sided ones (although
\emph{right}-sided sequents are more common than left-sided ones,
corresponding to $(\{0\}\times\dN)$-ary $\ast$-polycategories instead
of $({\dN \times \{0\}})$-ary ones).  Co-subunary syntax is less
common, but can be found for instance in~\cite{reddy:acceptors}.

Similarly, the bottom row shows that cyclic symmetric multicategories
(\cref{defn:csm}) are equivalent to positive-ary entries-only ones.
This suggests that arbitrary $\ast$-polycategories could also be
called something like ``augmented cyclic symmetric multicategories''.

\begin{thm}\label{thm:starpoly-01-1}
  Each of the vertical functors in~\eqref{eq:stpoly-sqs} has both a
  left adjoint $L$ and a right adjoint $R$, each of which is fully
  faithful (equivalently, the unit $\Id \to U L$ and counit
  $U R \to \Id$ are isomorphisms).  Moreover, the counit $L U \to \Id$
  and unit $\Id \to R U$ are bijective on objects, and fully faithful
  except on $(0,0)$-ary morphisms.
\end{thm}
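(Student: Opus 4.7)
The proof rests on a single observation, foreshadowed in \cref{rmk:00}: the $(0,0)$-ary hom-set $\cP\zzz$ in any $\ast$-polycategory is \emph{output-only}. Because a $(0,0)$-ary morphism has no objects in its domain or codomain, it can never appear as the composand of any $\circ^i_A$, no identity $1^i_A$ targets it, and the permutation action on it is trivial. Thus $\cP\zzz$ interacts with the rest of the structure only by \emph{receiving} compositions whose inputs are of positive arity, so one can replace $\cP\zzz$ by another set (equipped with the induced composition-into-it) without disturbing any axiom.

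Given this, I define the right adjoint $R$ almost trivially: set $(R\cQ)\zzz := \{\ast\}$, with every composition landing in arity $(0,0)$ forced to yield $\ast$. All axioms involving a $(0,0)$-ary morphism hold vacuously in $\{\ast\}$. In the right column, where $U : \stpoly{\dN\times\{0,1\}} \to \stpoly{\dN\times\{1\}}$ also forgets $(n,0)$-ary data for $n\geq 1$, one first canonically extends $\cQ$ by defining the missing hom-sets via the involution exactly as in the proof of \cref{thm:stpoly-eqv}, and then appends $(R\cQ)\zzz = \{\ast\}$. Since $\{\ast\}$ is terminal, the $(0,0)$-component of any morphism $\cP \to R\cQ$ is uniquely determined, yielding $\stpoly{X}(\cP,R\cQ) \cong \stpoly{Y}(U\cP,\cQ)$. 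Dually, the left adjoint $L\cQ$ agrees with $\cQ$ on all non-$(0,0)$-data (via the same canonical extension) and has $(L\cQ)\zzz$ equal to the \emph{free} $(0,0)$-ary set generated by $\cQ$: concretely, the quotient of $\coprod_{A\in\cQ}\cQ(A;)\times\cQ(;A)$ by the equivalence relation generated by the associativity, equivariance, and $\ast$-axioms involving such composites. Any extension $\tilde F : L\cQ\to \cP$ of $F : \cQ \to U\cP$ is then forced by $\tilde F(b\circ^m_A a) = F(b)\circ^m_A F(a)$, giving $L\dashv U$.

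Both $UL$ and $UR$ agree strictly with $\Id_{\stpoly{Y}}$, which is equivalent to full faithfulness of $L$ and $R$. The counit $LU\cP\to\cP$ is the identity on every non-$(0,0)$-ary hom-set and, on $\zzz$, the canonical evaluation map $(LU\cP)\zzz \to \cP\zzz$ sending a formal composite to its actual value in $\cP$; dually, the unit $\cP\to RU\cP$ is the identity on non-$(0,0)$-data and the unique collapse $\cP\zzz\to\{\ast\}$ on $\zzz$. Both are bijective on objects and fully faithful on every arity except $(0,0)$, as claimed.

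The main (and essentially only) point requiring care is confirming that no axiom of \cref{defn:stpoly} couples $\cP\zzz$ to the rest of the structure other than through ``output'' channels, so that $\cP\zzz$ can indeed be freely modified. A direct scan of the axioms confirms this: every composition and identity clause involves at least one composand of positive arity, and every permutation action preserves the underlying pair of list-lengths, so no operation takes $(0,0)$-ary input and no permutation mixes $\zzz$ with the other hom-sets.
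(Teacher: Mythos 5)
Your proof is correct, and for the right adjoint it is essentially the paper's argument in concrete form: the paper defines the hom-sets of $R\cP$ by right Kan extension along $\iSeq_{\dN\times\{1\}}(\cO)\into\iSeq_{\dN\times\{0,1\}}(\cO)$, which amounts exactly to your ``extend via the involution, then append $(R\cQ)\zzz=\{\ast\}$,'' and both arguments then rest on the same key observation that no operation takes $(0,0)$-ary input, so the terminal choice satisfies all axioms vacuously. Where you genuinely diverge is the left adjoint. The paper does not construct $L$ at all: it observes that the structures are essentially algebraic and $U$ preserves limits, invokes the adjoint functor theorem for locally presentable categories to get existence, and then deduces full faithfulness of $L$ from that of $R$ by a standard abstract argument (for a string of adjoints $L\dashv U\dashv R$, the unit of $L\dashv U$ is invertible iff the counit of $U\dashv R$ is). You instead build $L\cQ\zzz$ explicitly as the set of formal composites $g\circ_A f$ modulo the relations forced by associativity, equivariance, and the $\ast$-axioms, and verify the universal property by hand; this is precisely the description the paper relegates to the remark following the theorem, promoted to a proof. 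Your route buys an explicit description of the counit $LU\cP\to\cP$ as an evaluation map, at the cost of having to check well-definedness on the quotient and that the freely added $(0,0)$-ary morphisms do not disturb the other arities (which your ``output-only'' scan of the axioms does cover); the paper's route is shorter but non-constructive for $L$. Also note that the paper reduces all three vertical functors to the right-hand one via the horizontal equivalences before doing anything else, whereas you treat the columns somewhat in parallel; either organization is fine.
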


This lemma makes \cref{rmk:00} precise: the underlying cyclic
symmetric multicategory of a $\ast$-polycategory remembers everything
but the $(0,0)$-ary morphisms.  The fully faithful right adjoint of
the left-hand vertical functor appears in~\cite{dh:dk-cyc-opd-v1}.

\begin{proof}
  It suffices to consider the right-hand one
  $U:\stpoly{\dN\times \{0,1\}}\to \stpoly{\dN\times \{1\}}$.  To
  start with, since all the structures in question are essentially
  algebraic and $U$ simply forgets some of the data, it preserves
  limits.  Thus, by the adjoint functor theorem for locally
  presentable categories, it has a left adjoint.

  For its right adjoint, we define the homsets of $R\cP$ by right Kan
  extending those of $\cP$ along the inclusion
  $\iSeq_{\dN\times\{1\}}(\cO) \into \iSeq_{\dN\times \{0,1\}}(\cO)$.
  This automatically gives the symmetric actions, with $R\cP\zzz = 1$.
  The only new composition operations we need to define are those
  involving co-nullary morphisms:
  \[ R\P(\Lambda, A; ) \times \P(\Gamma; A) \xto{\circ_A}
    R\P(\Lambda, \Gamma;)
  \]
  Suppose $g\in R\P(\Lambda, A; )\cong \P(\Lambda;A\d)$ and
  $f\in \P(\Gamma; A)$.  If $\abs{\Lambda}>0$, say
  $\Lambda = \Lambda',B$, we can permute $B$ into the codomain of $g$
  and $A$ into its domain, and compose in \cP along $A$ If instead
  $\abs{\Gamma}>0$, say $\Gamma = \Gamma',C$, we can permute $A$ into
  the domain of $f$ and $C$ into the codomain, and compose in \cP
  along $A\d$.
  The unit, equivariance, and associativity axioms follow directly. % as in \cref{thm:starpoly-om-01}.
  The remaining composites to define have the form
  \[\cP(;A\d) \times \cP(;A) \to R\cP\zzz = 1,\]
  so they exist uniquely and all axioms about them are true.

  Evidently $U R \cP \cong \cP$ naturally.  On the other hand, note
  that all of the above definitions were forced except for $R\cP\zzz$
  and the compositions having it as codomain.  Thus, if \cP is given
  as a co-subunary $\ast$-polycategory, it must be isomorphic to
  $R U \cP$ except possibly at $\zzz$.  Since $R U \cP\zzz = 1$ is
  terminal, this ``isomorphism away from $\zzz$'' extends to a unique
  functor $\cP \to R U \cP$ that is, as claimed, bijective on objects
  and fully faithful except on $(0,0)$-ary morphisms.  The triangle
  identities for an adjunction are straightforward.

  Finally, full-faithfulness of $L$ follows from that of $R$ by a
  standard abstract argument, and the fact that $U$ remembers the
  objects and non-$(0,0)$-ary morphisms implies that $L U \to \Id$ is
  also bijective on objects and fully faithful except on $(0,0)$-ary
  morphisms.
\end{proof}

\begin{rmk}
  The $(0,0)$-ary morphisms of $L \cP$ are, as befits a left adjoint,
  ``freely generated'' by all composites $g\circ_A f$ for
  $f\in \cP(;A)$ and $g\in \cP(;A\d) \cong L\cP(A;)$, subject to
  relations imposed to force the necessary associativity axiom.
\end{rmk}

Now I claim that our poly double category $\madj$ is in fact a
\emph{$\ast$-poly double category}, i.e.\ an internal category in
$\ast$-polycategories.  More generally, we have:

\begin{thm}
  If \Omega is a presheaf of $\ast$-polycategories on a multicategory \cC, so is $\adjcom$.
\end{thm}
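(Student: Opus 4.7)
My plan is to transfer the $\ast$-polycategory structure of $\Omega$ to $\adjcom$ component by component, exploiting the fact that the polycategorical structure of $\adjcom(\Gamma)$ already established in \cref{thm:2tens} is ``palindromic'' enough that the only new content lies in the secondary components, where we inherit directly from $\Omega$.

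First I would define the involution on objects: for $\phi = (\phi\p,\phi\m,\uphi) \in \adjcom(\Gamma)$, set $\phi\d = (\phi\m,\phi\p,\uphi\sigma)$, where $\sigma$ is the transposition of the last two arguments. Strict involutivity $(\phi\d)\d = \phi$ follows from $\sigma^2 = \id$ together with functoriality of the symmetric action of the presheaf $\Omega$. This agrees with the duality constructed in \cref{thm:2tens}, so the polycategorical duality morphisms $\eta$ and $\ep$ are already available.

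The key observation is that the primary data of an $\adjcom$-morphism is ``position-agnostic'' with respect to the involution: a primary morphism indexed by an object $\chi$ in the codomain has codomain $\chi\p$, while one indexed by $\chi\d$ in the domain has codomain $(\chi\d)\m = \chi\p$, so moving $\chi$ across the turnstile and dualizing leaves the primary data formally unchanged up to the symmetric action of $\cC$. The secondary component $\alpha$ lives in the polycategory $\Omega(\Gamma,\vec\phi\p,\vec\psi\m)$, which is itself a $\ast$-polycategory by hypothesis; the native $\ast$-action of this polycategory on $\alpha$ is exactly what is needed to implement the extended symmetric group action on $\adjcom$-morphisms, using that $\ul{\psi\d} = \upsi\sigma$ matches the source/target entries before and after dualizing $\psi$.

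Granting these observations, I would define the extended $S_{m+n}$-action and the $l$- and $r$-composition operations on $\adjcom(\Gamma)$ in the only reasonable way --- primary data relabelled, secondary data transformed by the $\ast$-action of $\Omega$ --- and verify each axiom of \cref{defn:stpoly}; each reduces to the corresponding axiom already holding in $\cC$ and in the $\ast$-polycategories $\Omega(\Gamma,\ldots)$. Finally I would check compatibility with the presheaf action of $\adjcom$: for $f\in\cC(\Gamma';\Gamma)$, the equation $f^*(\phi\d) = (f^*\phi)\d$ and its morphism-level analogue reduce to the commutativity of the $\cC$-action with the $\ast$-action on $\Omega$, which is built into the definition of a presheaf of $\ast$-polycategories. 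The main obstacle is purely bookkeeping: tracking the symmetric group actions consistently (both the elementary swap $\sigma$ and the larger permutations entering the multi-object extended action) and verifying each $\ast$-polycategory axiom in turn; conceptually nothing new is constructed, since the entire $\ast$-structure on $\adjcom$ is pulled back slot by slot from that of $\Omega$.
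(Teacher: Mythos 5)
Your overall strategy --- inheriting the entire $\ast$-structure of $\adjcom(\Gamma)$ slot by slot from that of $\Omega$, with the primary components merely relabelled and only the secondary component genuinely transformed --- is the same as the paper's. But your definition of the involution on objects is wrong, and this is the one point in the proof where something beyond bookkeeping is required. You set $\phi\d = (\phi\m,\phi\p,\uphi\sigma)$, leaving the object $\uphi$ of the polycategory $\Omega(\Gamma,\phi\m,\phi\p)$ untouched and only permuting its indexing slots. The paper instead takes the third component to be $\uphi\d$, the image of $\uphi$ under the \emph{strict involution of the $\ast$-polycategory} $\Omega(\Gamma,\phi\m,\phi\p)$, transported by the symmetry into $\Omega(\Gamma,\phi\p,\phi\m)$. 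The difference is essential. Consider the extended symmetric action that must carry a morphism $(\vec\phi)\to(\psi_1,\dots,\psi_n)$ of $\adjcom(\Gamma)$ to a morphism $(\vec\phi,\psi_n\d)\to(\psi_1,\dots,\psi_{n-1})$. The primary components correspond bijectively, as you correctly observe: the new $g$ indexed by $\psi_n\d$ has codomain $(\psi_n\d)\m=\psi_n\p$, the same signature as the old $f_n$. But the secondary component of the new morphism must contain $g^*\,\ul{\smash{\psi_n\d}}$ as a new entry of its \emph{domain}, whereas the extended action of the $\ast$-polycategory $\Omega(\Gamma,\vec\phi\p,\vec\psi\m)$ applied to the old secondary component $\al$ produces a morphism whose new domain entry is $(f_n^*\upsi_n)\d$, the \emph{$\Omega$-dual} of the deleted codomain entry. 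These match only if $\ul{\smash{\psi_n\d}}=\upsi_n\d$ (up to the symmetry); with your formula $\ul{\smash{\psi_n\d}}$ is $\upsi_n$ itself, reindexed, and in a general $\ast$-polycategory there is no canonical bijection between morphisms with $x$ in the codomain and morphisms with $x$ (rather than $x\d$) in the domain. So your extended action on hom-sets does not typecheck, and your claim that ``$\ul{\psi\d}=\upsi\sigma$ matches the source/target entries before and after dualizing'' is exactly where the argument breaks.

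That your formula ``agrees with the duality constructed in \cref{thm:2tens}'' is a symptom of the problem rather than a reassurance: that formula, like the informal $(A\m,A\p,\uA\sigma)$ of the introduction, is adequate precisely in the Chu-type situations where $\Omega$ is Frobenius-(pseudo-)discrete, so that every object of $\Omega(\dots)$ is its own dual, $\uphi\d=\uphi$, and the two formulas coincide. For a general presheaf of $\ast$-polycategories they diverge, and only $\uphi\d$ works. Once the involution is corrected, the rest of your argument --- relabel the primary data $f_j$, $g_i$, apply the $\ast$-action of $\Omega$ to the secondary datum, and observe that all axioms and the compatibility with the presheaf action are inherited from $\Omega$ and $\cC$ --- is precisely the paper's proof.
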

\begin{proof}
  We take the dual of $(\phi\p,\phi\m,\uphi)$ to be
  $(\phi\m,\phi\p,\uphi\d)$, where $\uphi\d$ is the dual of $\uphi$ in
  the $\ast$-polycategory $\Omega(\Gamma,\phi\m,\phi\p;)$, acted on by
  a symmetry to land in $\Omega(\Gamma,\phi\p,\phi\m;)$.  The
  symmetric action on 2-morphisms in $\adjcom(\Gamma)$ permutes the
  morphisms $f_j$ and $g_i$ and uses the symmetric action on morphisms
  in $\Omega$.
\end{proof}

\begin{cor}
  Any double Chu construction $\dchu(\sC,\Omega)$ is a $\ast$-poly double category.
\end{cor}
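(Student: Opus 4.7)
The plan is to chain the preceding theorem with the observation that the Frobenius pseudo-discrete construction $(-)\sfpd$ lifts categories to $\ast$-polycategories, and then propagate this through the internal-category machinery defining $\dchu$.

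First I would check that for any category $\cX$, the polycategory $\cX\sfpd$ is naturally a $\ast$-polycategory under the trivial involution $A\d = A$. The reason is that a clique on $(\Gamma;\Delta)$ is by definition a coherent family of isomorphisms indexed by the combined list, with no intrinsic distinction between ``domain'' and ``codomain'' entries. Hence for any permutation $\sigma : \Gamma,\Delta\d \toiso \Lambda,\Sigma\d$ we get a canonical bijection $\cX\sfpd(\Gamma;\Delta) \toiso \cX\sfpd(\Lambda;\Sigma)$, functorial in $\sigma$, and the identities and three composition operations $\circ^m, \circ^l, \circ^r$ of \cref{defn:stpoly} are all induced from the single composition in $\cX\sfpd$ via the same principle.

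Since $(-)\sfpd$ is functorial, a $\cat$-valued presheaf $\Omega$ on a multicategory $\cC$ gives rise to a presheaf $\Omega\sfpd$ of $\ast$-polycategories. The preceding theorem then shows that $\adjc(\Omega\sfpd)$ is a presheaf of $\ast$-polycategories as well; evaluating at the empty context $\zz$ gives that $\chuz(\cC,\Omega) = \adjc(\Omega\sfpd)\zz$ is a $\ast$-polycategory, with dual given explicitly by $(A\p, A\m, \uA)\d = (A\m, A\p, \uA\d)$ where $\uA\d$ is obtained by transposing the inputs of $\uA$.

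Finally, for the double case, recall $\dchu(\sC,\Omega) = \chuz(\dQ'(\sC,\Omega))$. The construction $\dQ'(\sC,\Omega)$ is an internal category in the category of multicategories equipped with $\cat$-valued presheaves, and $\chuz$ preserves pullbacks (hence internal categories). Since the construction of duals just described is defined componentwise by functorial operations on the triples $(A\p, A\m, \uA)$, it is automatically respected by the source, target, identity, and composition maps of $\dQ'(\sC,\Omega)$, so applying $\chuz$ yields an internal category in $\ast$-polycategories, i.e.\ a $\ast$-poly double category. The main place to be careful is in verifying that the dual operation commutes strictly with the internal-category structure maps of $\dQ'(\sC,\Omega)$, but since a vertical arrow in $\dchu(\sC,\Omega)$ is a triple $(u\p, u\m, \uu)$ whose dual $(u\m, u\p, \uu\d)$ is manifestly natural in all data, this compatibility is immediate.
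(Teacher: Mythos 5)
Your proposal is correct and follows essentially the same route as the paper: the paper's entire proof is the observation that Frobenius (pseudo-)discrete polycategories are naturally $\ast$-polycategories, combined with the preceding theorem that $\adjcom$ inherits $\ast$-structure from $\Omega$. You have simply spelled out that one-liner in more detail, including the (correct, and implicitly assumed by the paper) compatibility of the duality with the internal-category structure maps under $\dQ'$ and $\chuz$.
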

\begin{proof}
  Frobenius (pseudo-)discrete polycategories are always naturally $\ast$-polycategories.
\end{proof}

Recall from \cref{defn:madj} that $\madj$ consists of the objects
$\rep\cA = (\cA,\cA\op,\hom_\cA)$ and similar vertical arrows in
$\dchu(\cat,\Set)$.  It is therefore closed under the duality of
$\dchu(\cat,\Set)$, so it is also a $\ast$-poly double category.
Hence it has an underlying cyclic symmetric multi double category,
which we can compare to the cyclic multi double category
of~\cite{cgr:cyclic}.  In~\cite{cgr:cyclic} no symmetric structure was
considered, but we can of course forget the existence of that
symmetric structure and remember only the cyclic one.  This enables us
to finally state the following theorem.

\begin{rmk}
  In fact,~\cite{cgr:cyclic} work directly with $n$-variable
  \emph{mutual left} adjunctions.  Thus, in the language of
  \cref{defn:stpoly}, what their construction yields most directly is
  a positive-ary entries-only (i.e.\ $(\dN_{>0}\times \{0\})$-ary)
  $\ast$-polycategory.  However, to facilitate comparison
  with~\cite{cgr:cyclic} we will likewise use the notation of the
  equivalent $(\dN\times \{1\})$-ary version.
\end{rmk}

\begin{thm}\label{thm:cgr}
  The underlying cyclic multi double category of the $\ast$-poly
  double category $\madj$ is isomorphic to the cyclic multi double
  category constructed in~\cite{cgr:cyclic}.
\end{thm}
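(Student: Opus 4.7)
The plan is to identify the structure on both sides piece by piece, using the equivalence between $\ast$-polycategories and cyclic symmetric multicategories-with-duals to translate our $\madj$ into a form directly comparable with the construction of~\cite{cgr:cyclic}. First, I would apply \cref{thm:stpoly-eqv} (the bottom row of~\eqref{eq:stpoly-sqs}) to pass from the $\ast$-poly double category $\madj$ to its underlying $(\dN\times\{1\})$-ary structure, which \emph{is} a cyclic symmetric multi double category; the $(0,0)$-ary datum is discarded in exactly the same way on both sides, so \cref{rmk:00} does not obstruct the comparison. On objects and vertical arrows the identification is immediate: both sides have categories and functors, and the assignment $\cA\mapsto\rep\cA = (\cA,\cA\op,\hom_\cA)$, $f \mapsto \rep f = (f,f\op,\hom_f)$ is a bijection onto the generators of $\madj$.

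Next I would unpack an $n$-ary horizontal morphism $(\rep{\cA_1},\dots,\rep{\cA_m})\to(\rep{\cB})$ of $\madj$ and observe that by the discussion following the definition of $\chuz(\cC,\Omega)$ in \cref{sec:2-chu} (applied to $\sC=\cat$, $\Omega=\Set$) it consists precisely of an $m$-tuple of functors $f\p, f\m_i$ of the appropriate variances together with a \emph{clique} of natural isomorphisms between the induced hom-profunctors. This is exactly the data of an $(m,1)$-variable (mutual right) adjunction as described in \cref{sec:intro-mvar}, and up to the orientation conventions of \cref{rmk:duality} it matches the definition of~\cite{cgr:cyclic} (whose convention we are following per the note preceding the theorem). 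For the cyclic action I would check that the action on $\madj$ induced by the $\ast$-polycategorical structure --- which by the discussion in \cref{sec:intro-mvar} is obtained by composing with the unit and counit of the duality $\rep\cA \adj \rep{\cA\op}$ --- reduces on representables to the tautological rotation of the hom-profunctor clique, which is what~\cite{cgr:cyclic} defines as their cyclic action.

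For composition, I would simply read off the formulas: the polycategorical composition in $\chuz(\cat,\Set)$ given in \cref{sec:2cd} specializes on representables to the composition of multivariable adjunctions via the hom-isomorphism pasting used in~\cite{cgr:cyclic}, since the primary components compose as functors and the secondary clique composes by transitivity along the shared hom-profunctor. Vertical arrows compose as functors on both sides, and the underlying multicategory of functors and natural transformations (the co-unary part) agrees trivially. The one step requiring care is the 2-cells: on our side a 2-cell between horizontal $n$-ary morphisms is a family $(\mu\p_j,\mu\m_i)$ of natural transformations in $\cat$ subject to the $\binom{n+m}{2}$ compatibility conditions of \cref{fig:2-cell-compat}, while~\cite{cgr:cyclic} describes a 2-cell as a ``multivariable mate-tuple''.

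The main obstacle, then, is verifying that these two presentations of 2-cells agree. I expect this to be essentially the Yoneda reformulation of the classical mate correspondence: when specialized to representable objects the compatibility diagram of \cref{fig:2-cell-compat} becomes, after unpacking $\uA_i = \hom_{\cA_i}$ etc., the statement that any two of the $\mu\p_j, \mu\m_i$ paste via the adjunction isomorphisms of $f$ and $g$ to give the same natural transformation of hom-sets. This is exactly the defining equation of a multivariable mate-pair in~\cite{cgr:cyclic}, and the ``clique'' shape of the coherence --- any two of the $n+m$ components determine the rest --- matches the one-among-many redundancy noted in \emph{op.~cit.}. Checking this reduces to a direct diagram chase using naturality of the hom-adjunction isomorphisms, which is routine but must be done at least once to verify the sign/variance conventions line up; after that, functoriality of the isomorphism on composition of 2-cells and vertical pasting is automatic.
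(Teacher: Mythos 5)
Your proposal follows essentially the same route as the paper's proof: match objects, vertical arrows, and horizontal arrows directly (where the two constructions visibly coincide), and concentrate all the work in the 2-cells, where the compatibility conditions of \cref{fig:2-cell-compat} are unpacked on representables via Yoneda into mate conditions. The one caveat is that the step you describe as a ``routine diagram chase'' splits into two halves of unequal weight. The routine half is showing that each pairwise condition is, after Yoneda, exactly the statement that the two transformations are mates under the relevant one-variable adjunctions; this gives \emph{faithfulness} of the comparison functor $\madjs\to\madjcgr$, since all the $\mu\m_i$ are then determined by $\mu\p$. The non-routine half is \emph{fullness}: a 2-cell of \cite{cgr:cyclic} is the single transformation $\mu\p$ alone, so one must show that defining every $\mu\m_i$ as its mate yields a family satisfying \emph{all} ${n+m \choose 2}$ pairwise conditions --- including those among the $\mu\m_i$ themselves --- and that these mates are natural in the remaining variables. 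This is the coherence of the multivariable mate correspondence, which the paper does not reprove but imports from \cite[Prop.~2.11, Prop.~2.13 and Thm~2.16]{cgr:cyclic}; your write-up should either cite these or acknowledge that this is where the real content lies. Relatedly, the cyclic action of \cite{cgr:cyclic} on 2-cells is defined by \emph{taking mates}, not by a tautological rotation of a clique; it is precisely the mate identification above that shows the rotation on your side corresponds to mate-taking on theirs.
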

\begin{proof}
  For now, let $\madjs$ denote our version and $\madjcgr$ denote
  theirs.  By inspection, the two coincide on objects (categories),
  vertical arrows (functors), and horizontal arrows (co-unary
  multivariable adjunctions).  (Recall in particular that for
  $f:\rep\cA \to \rep\cB$ in $\madjs$, the functor $f\p : \cA\to\cB$
  is the \emph{right} adjoint and $f\m: \cB\op\to \cA\op$ is the
  \emph{left} adjoint.)

  However, the 2-cells of $\madjs$ are, like those in
  $\dchu(\cat,\Set)$, \emph{families} of natural transformations
  $\mu\p_j,\mu\m_i$ related by the axioms such as
  \cref{fig:2-cell-compat}.  Specifically, a 2-cell
    \[
  \begin{tikzcd}
    (A_1,\dots,A_m) \ar[r,"f"]
    \ar[d,shift left=7,"u_m"] \ar[d,shift right=7,"u_1"'] \ar[d,phantom,"\cdots"] \ar[dr,phantom,"\quad\Downarrow\scriptstyle\mu"] &
    B \ar[d,"v"] \\
    (C_1,\dots,C_m) \ar[r,"g"'] &
    D
  \end{tikzcd}
  \]
  consists of a family of natural transformations
  \[
  \begin{tikzcd}
    (A_1, \dots, A_m) \ar[r,"f\p"{name=f}]
    \ar[d,shift left=10] \ar[d,shift right=10]
    \ar[d,phantom,"{\scriptscriptstyle\cdots (u_1,\dots,u_m)\cdots}"]
    % \ar[dr,phantom,"\Downarrow\scriptstyle\mu"]
    &
    B \ar[d,"v"] \\
    (C_1, \dots, C_m) \ar[r,"g\p"'{name=g}] &
    D
    \ar[from=f,to=g,phantom,"\Downarrow\scriptstyle\mu\p"]
  \end{tikzcd}
  \]
  and
  \[
  \begin{tikzcd}
    (A_1 , \dots, \widehat{A_i},\dots A_m , B\op) \ar[r,"f\m_i"{name=f}]
    \ar[d,shift left=20] \ar[d,shift right=20]
    \ar[d,phantom,"{\scriptscriptstyle\cdots(u_1 , \dots, \widehat{u_i},\dots u_m , v\op) \cdots}"]
    % \ar[dr,phantom,"\Downarrow\scriptstyle\mu"]
    &
    A_i\op \ar[d,"u_i\op"] \\
    (C_1 , \dots, \widehat{C_i},\dots C_m , D\op) \ar[r,"g\m_i"'{name=g}] &
    C_i\op
    \ar[from=f,to=g,phantom,"\Downarrow\scriptstyle\mu_i\m"]
  \end{tikzcd}\mathrlap{\qquad (1\le i\le m)}
  \]
  any two of which satisfy a commutativity condition;
  % For instance, the condition for $\mu_1\p$ and $\mu_1\m$  is %$\mu_2\m$
  % \begin{equation}\label{eq:2-cell-compat}
  % % \hspace{-2cm}
  % \begin{tikzcd}[column sep=huge,row sep=large]
  %   (A_1\p , \dots, A_m\p , B_1\m, \dots, B_n\m)
  %   \ar[r,"{\uA_1\circ (1,f\m_1)}"{name=f2},shift left=4]
  %   \ar[r,"{\uB_1\circ (1,f\p_1)}"'{name=f},shift right]
  %   \ar[d,shift left=20] \ar[d,shift right=20]
  %   \ar[d,phantom,"{\scriptstyle\cdots(u_1\p , \dots, u_m\p , v_1\m, \dots, v_n\m) \cdots}"]
  %   % \ar[dr,phantom,"\Downarrow\scriptstyle\mu"]
  %   &
  %   () \ar[d,equals] \\
  %   (C_1\p , \dots, C_m\p , D_1\m, \dots, D_n\m)
  %   \ar[r,"{\uD_1\circ (1,g\p_1)}"'{name=g}]
  %   &
  %   ()
  %   \ar[from=f,to=g,phantom,"{\Downarrow\scriptstyle\uv_1\circ (1,\mu_1\p)}"]
  %   \ar[from=f2,to=f,phantom,"\Downarrow\scriptstyle\cong"]
  % \end{tikzcd}
  % \quad=\quad
  % \begin{tikzcd}[column sep=huge,row sep=large]
  %   (A_1\p , \dots, A_m\p , B_1\m, \dots, B_n\m) \ar[r,"{\uA_1\circ (1,f\m_1)}"{name=f}]
  %   \ar[d,shift left=20] \ar[d,shift right=20]
  %   \ar[d,phantom,"{\scriptstyle\cdots(u_1\p , \dots, u_m\p , v_1\m, \dots, v_n\m) \cdots}"]
  %   % \ar[dr,phantom,"\Downarrow\scriptstyle\mu"]
  %   &
  %   () \ar[d,equals] \\
  %   (C_1\p , \dots, C_m\p , D_1\m, \dots, D_n\m)
  %   \ar[r,"{\uC_1\circ (1,g\m_1)}"{name=g},shift left]
  %   \ar[r,"{\uD_1\circ (1,g\p_1)}"'{name=g2},shift right=4]
  %   &
  %   ()
  %   \ar[from=f,to=g,phantom,"{\Downarrow\scriptstyle\uu_1\circ (1,\mu_1\m)}"]
  %   \ar[from=g,to=g2,phantom,"\Downarrow\scriptstyle\cong"]
  % \end{tikzcd}
  % % \hspace{-2cm}
  % \end{equation}
  whereas an analogous 2-cell in $\madjcgr$ consists \emph{only} of the transformation $\mu\p$.
  Thus, we have a multicategory functor $\madjs \to \madjcgr$ that simply forgets the transformations $\mu\m_i$.

  We now show that this functor preserves the cyclic action.  As
  before, this is obvious except on the 2-cells.  In $\madjs$, the
  cyclic action on 2-cells simply rotates the $\mu\p$ and $\mu\m_i$;
  whereas in $\madjcgr$ the cyclic action is defined by constructing
  mates.  The point is that the compatibility axioms on the 2-cells
  $\mu\p$ and $\mu\m_i$ in $\madjs$ are precisely a way of saying that
  they are each other's mates.  For instance, the condition from
  \cref{fig:2-cell-compat} for $\mu\p$ and $\mu_1\m$ becomes
  % \[\hspace{-3cm}
  % \begin{tikzcd}
  %   A_1(a_1,f_1\m(a_2,\dots,a_m,b_1,\dots,b_n)) \ar[d,"u_1"'] \ar[r,"\cong"] &
  %   A_2(a_2,f_2\m(a_1,a_3,\dots,a_m,b_1,\dots,b_n)) \ar[d,"u_2"] \\
  %   C_1(u_1(a_1),u_1(f_1\m(a_2,\dots,a_m,b_1,\dots,b_n))) \ar[d,"{\mu_1\m}"'] &
  %   C_2(u_2(a_2), u_2(f_2\m(a_1,a_3,\dots,a_m,b_1,\dots,b_n))) \ar[d,"{\mu_2\m}"] \\
  %   C_1(u_1(a_1),g_1\m(u_2(a_2),\dots,u_m(a_m),v_1(b_1),\dots,v_n(b_n)) \ar[r,"\cong"'] &
  %   C_2(u_2(a_2), g_2\m(u_1(a_1),u_3(a_3),\dots,u_m(a_m),v_1(b_1),\dots,v_n(b_n))))
  % \end{tikzcd}
  % \hspace{-3cm}
  % \]
  \[\hspace{-3cm}
    \begin{tikzcd}
    \cA_1(f_1\m(\vec a,b),a_1) \ar[d,"u_1"'] \ar[r,"\cong"] &
    \cB(b,f\p(a_1,\vec a)) \ar[d,"v"]\\
    \cC_1(u_1(f_1\m(\vec a,b)),u_1(a_1)) \ar[d,"{\mu_1\m}"'] &
    \cD(v(b),v(f\p(a_1,\vec a))) \ar[d,"{\mu\p}"] \\
    \cC_1(g_1\m(\vec{ua},v(b)),u_1(a_1)) \ar[r,"\cong"'] &
    \cD(v(b),g\p(u_1(a_1),\vec{ua}))
  \end{tikzcd}
  \hspace{-3cm}
  \]
  % \[\hspace{-3cm}
  % \begin{tikzcd}
  %   A_1(a_1,f_1\m(a_2,\vec a,\vec b)) \ar[d,"u_1"'] \ar[r,"\cong"] &
  %   A_2(a_2,f_2\m(a_1,\vec a,\vec b)) \ar[d,"u_2"] \\
  %   C_1(u_1(a_1),u_1(f_1\m(a_2,\vec a,\vec b))) \ar[d,"{\mu_1\m}"'] &
  %   C_2(u_2(a_2), u_2(f_2\m(a_1,\vec a,\vec b))) \ar[d,"{\mu_2\m}"] \\
  %   C_1(u_1(a_1),g_1\m(u_2(a_2),\vec{ua},\vec{vb}) \ar[r,"\cong"'] &
  %   C_2(u_2(a_2), g_2\m(u_1(a_1),\vec{ua},\vec{vb})))
  % \end{tikzcd}
  % \hspace{-3cm}
  % \]
  where $\vec a = (a_2,\dots,a_m)$ and $\vec{ua} = (u_2(a_2),\dots,u_m(a_m))$.
  The Yoneda lemma implies that this is equivalent to
  \[
  \begin{tikzcd}
    v(b) \ar[r] \ar[d] &
    g\p(g_1\m(\vec{ua},v(b)),\vec{ua}) \ar[d,"\mu\m_1"] \\
    v(f\p(f_1\m(\vec a,b),\vec a))\ar[r,"\mu\p"'] &
    g\p(u_1(f_1\m(\vec a,b)),\vec{ua})
  \end{tikzcd}
  \]
  % \[
  % \begin{tikzcd}
  %   u_2(a_2) \ar[r] \ar[d] &
  %   u_2(f_2\m(f_1\m(a_2,\vec a,\vec b),\vec a,\vec b)) \ar[d]\\
  %   g_2\m(g_1\m(u_2(a_2),\vec{ua},\vec{vb}),\vec{ua},\vec{vb}) \ar[r] &
  %   g_2\m(u_1(f_1\m(a_2,\vec a,\vec b)),\vec{ua},\vec{vb})
  % \end{tikzcd}
  % \]
  If we fix $\vec a$ and write
  \begin{alignat*}{3}
    F\p(a) &= f\p(a,\vec a) &\qquad
    G\p(c) &= g\p(c,\vec{ua}) &\qquad
    U(a) &= u_1(a)\\
    F\m(b) &= f_1\m(\vec a,b) &\qquad
    G\m(d) &= g_1\m(\vec{ua},d) &\qquad
    V(b) &= v(b)
  \end{alignat*}
  then this becomes
  \begin{equation}
  \begin{tikzcd}
    V b  \ar[r] \ar[d] &
    G\p G\m V b \ar[d,"G\p \mu\m"] \\
    V F\p F\m b \ar[r,"\mu\p F\m"'] &
    G\p U F\m b 
  \end{tikzcd}\label{eq:mate-sq}
  \end{equation}
  which is a standard condition characterizing $\mu\p$ and $\mu\m$ as
  mates under the one-variable adjunctions $F\m\dashv F\p$ and
  $G\m\dashv G\p$.  Explicitly, if we apply $G\m$ on the outside and
  postcompose with the counit of $G\m\dashv G\p$, we get
  \[
  \begin{tikzcd}[column sep=large]
    G\m V b  \ar[r] \ar[d] &
    G\m G\p G\m V b \ar[d,"G\m G\p \mu\m"] \ar[r] &
    G\m V b \ar[d,"\mu\m"] \\
    G\m V F\p F\m b \ar[r,"G\m \mu\p F\m"'] &
    G\m G\p U F\m b \ar[r] &
    U F\m b
  \end{tikzcd}
  \]
  where the right-hand square is naturality and the top composite is
  $1_{G\m V b}$ by a triangle identity.  Thus, $\mu\m$ is
  characterized as the left-bottom composite, i.e.\ as a mate of
  $\mu\p$.  We can dually characterize $\mu\p$ as a mate of $\mu\m$;
  while conversely if either is defined as a mate of the other in such
  a way then~\eqref{eq:mate-sq} commutes.

  One does have to check that such a definition is natural in the
  \emph{other} variables, but this was done
  in~\cite[Prop.~2.11]{cgr:cyclic}.  Thus, the functor
  $\madjs\to\madjcgr$ preserves the cyclic action.  Moreover, this
  also shows that it is \emph{faithful} on 2-cells, since all the
  $\mu\m_i$'s are determined as mates of $\mu\p$.

  To show that it is also full on 2-cells, we need to know that if
  $\mu\p$ is given and we define all the $\mu\m_i$'s as its mates, the
  resulting $\mu\m_i$'s satisfy their \emph{own} pairwise conditions
  (\cref{fig:2-cell-compat}), and therefore define a 2-cell in
  $\madjs$.  But this is the content of~\cite[Prop.~2.13 and
  Theorem~2.16]{cgr:cyclic}.  Thus, the functor $\madjs\to\madjcgr$ is
  an isomorphism.
\end{proof}

\begin{cor}\label{thm:madj}
  A 2-cell in $\madj$ is uniquely determined by any one of the transformations $\mu_j\p$ or $\mu_i\m$.\qed
\end{cor}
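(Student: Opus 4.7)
The plan is to derive this from Theorem~\ref{thm:cgr} using the cyclic action on $\madj$, which it inherits as an underlying cyclic symmetric multi double category from its $\ast$-poly double category structure (where $\rep{\cA}\d = \rep{\cA\op}$).

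First I would observe that, for an $(m,1)$-ary horizontal arrow (the case treated in Theorem~\ref{thm:cgr}), each single transformation among $\mu\p$ and the $\mu_i\m$'s already determines all the others. The mate square \cref{fig:2-cell-compat} expresses $\mu_i\m$ as a mate of $\mu\p$ via the one-variable adjunctions $F_i\m \dashv F_i\p$ and $G_i\m \dashv G_i\p$, and this square is symmetric in its two transformations: given $\mu\p$ one recovers each $\mu_i\m$ as its mate (this is exactly what is shown in the proof of Theorem~\ref{thm:cgr}), while given any $\mu_i\m$ one recovers $\mu\p$ as \emph{its} mate under the same adjunctions. Hence in the co-unary case each single primary transformation determines the 2-cell.

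To extend this to an arbitrary $(m,n)$-ary horizontal arrow, I would apply the cyclic action to rotate a chosen codomain, or dualized domain, object into the unique codomain slot. To show that $\mu_{j_0}\p$ determines the 2-cell, I would use the cyclic rotation carrying the $(m,n)$-ary morphism to an $(m+n-1,1)$-ary morphism with codomain $B_{j_0}$, placing the remaining $B_k\op$'s alongside the $A_i$'s in the domain. The cyclic action on 2-cells sends $\mu_{j_0}\p$ to the singleton codomain transformation of the rotated 2-cell, so the co-unary case applies; invertibility of the cyclic action then transfers the conclusion back to the original. An analogous rotation, moving $A_{i_0}$ into the codomain slot, handles the case of $\mu_{i_0}\m$.

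The one point requiring care is verifying that the cyclic action on 2-cells in $\madj$ acts by a simple permutation of the family $(\mu_j\p, \mu_i\m)$ (up to taking opposites of the underlying natural transformations). However, this is essentially built in: $\madj$ inherits its $\ast$-structure from $\dchu(\cat,\Set)$, and the pairwise compatibility axioms of the form \cref{fig:2-cell-compat}, which the proof of Theorem~\ref{thm:cgr} identifies as symmetric mate squares, hold \emph{between every pair} of primary transformations $\mu_j\p,\mu_i\m$ of a 2-cell — not merely between $\mu\p$ and each $\mu_i\m$. So each pair is related by a mate correspondence, and the reduction to the co-unary case via the cyclic action is immediate.
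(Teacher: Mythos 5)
Your proposal is correct and takes essentially the same route as the paper: the corollary is left as an immediate consequence of \cref{thm:cgr}, whose proof identifies each pairwise compatibility condition as an invertible mate correspondence and explicitly notes that the cyclic action on 2-cells of $\madj$ simply rotates the family $(\mu_j\p,\mu_i\m)$. Your reduction of the general $(m,n)$-ary case to the co-unary one via the cyclic action, followed by the two-sided invertibility of the mate square, is exactly the intended deduction.
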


\cref{thm:madj} is not true of more general 2-cells in
$\dchu(\cat,\Set)$: a transformation between ``polarized adjunctions''
must be ``equipped with specified mates''.

Recall also (\cref{rmk:duality}) that our conventions were chosen to
agree with those of~\cite{cgr:cyclic}, so that a 2-cell $f\to g$ in
$\madj$ is determined by transformations in the \emph{same} direction
between the \emph{right} adjoints $f_i\p \to g_i\p$ and in the
\emph{opposite} direction between the \emph{left} adjoints
$g_j\m \to f_j\m$.  But this is a fairly arbitrary choice.

\begin{rmk}
  Since we chose to ``incorrectly'' give our $\ast$-polycategory
  $\madj$ exactly one $(0,0)$-ary morphism (recall \cref{rmk:00}), it
  happens to be in the image of the right adjoint $R$ from
  \cref{thm:starpoly-01-1}.  Thus, it is $R$ of its underlying cyclic
  symmetric multicategory, which by \cref{thm:cgr} is that
  of~\cite{cgr:cyclic}.  Thus, we could equivalently have constructed
  it by (adding a symmetric action and) applying \cref{thm:cgr} to the
  construction in~\cite{cgr:cyclic}; but the relationship to the Chu
  and Dialectica constructions would then be obscured.
\end{rmk}

\begin{rmk}
  We have focused on multivariable adjunctions between ordinary
  categories and $\dchu(\cat,\Set)$, mainly for simplicity and to
  match~\cite{cgr:cyclic}.  However, multivariable adjunctions exist
  much more generally, e.g.\ for enriched, internal, and indexed
  categories, as well as the ``enriched indexed categories''
  of~\cite{shulman:eicats}; the only requirement is that in the
  enriched cases the enriching category must apparently be symmetric.
  Each of these contexts gives rise to a similar poly double category
  of multivariable adjunctions that embeds into an appropriate double
  Chu construction.

  There ought to be a general theorem encompassing all these cases,
  applying to any 2-category $\sK$ containing an object $\Omega$
  satisfying some sort of ``Yoneda lemma'', but it is not clear
  exactly what this should mean.  Existing contexts for formal Yoneda
  lemmas such
  as~\cite{street-walters:yoneda,street:fib-yoneda-2cat,weber:2toposes,wood:proarrows-i}
  are either too closely tied to the one-variable case, lack a notion
  of ``opposite'', or consider only ``cartesian'' situations at the
  expense of enriched ones.
\end{rmk}

\end{document}